\documentclass[letterpaper,11pt,oneside,reqno]{amsart}

\usepackage[backref=page,colorlinks=true,linkcolor=blue,citecolor=red]{hyperref}
\usepackage[alphabetic,nobysame]{amsrefs}

\usepackage[T1]{fontenc}
\usepackage{amsmath,amssymb,amsthm,amsfonts,mathtools}
\usepackage{graphicx,color}
\usepackage{upgreek}

\usepackage[all]{xy}

\xyoption{matrix}
\xyoption{arrow}
\usepackage{young}
\usepackage{hhline}
\usepackage{multirow}
\usepackage{shuffle}

\allowdisplaybreaks
\numberwithin{equation}{section}

\usepackage{tikz}
\usetikzlibrary{shapes,arrows,positioning,decorations.markings}

\usepackage{array}
\usepackage{adjustbox}
\usepackage{cleveref}
\usepackage{aliascnt}
\usepackage{enumerate}

\usepackage[most]{tcolorbox}

\usepackage[DIV=12]{typearea}
\usepackage{caption}
\theoremstyle{plain}
\newtheorem{lemma}{Lemma}[section]
\newaliascnt{theorem}{lemma}
\newtheorem{theorem}[theorem]{Theorem}
\aliascntresetthe{theorem}
\newaliascnt{proposition}{lemma}
\newtheorem{proposition}[proposition]{Proposition}
\aliascntresetthe{proposition}
\newaliascnt{corollary}{lemma}

\aliascntresetthe{corollary}
\theoremstyle{definition}
\newaliascnt{definition}{lemma}
\newtheorem{definition}[definition]{Definition}
\aliascntresetthe{definition}
\newaliascnt{remark}{lemma}
\newtheorem{remark}[remark]{Remark}
\aliascntresetthe{remark}
\newaliascnt{example}{lemma}

\aliascntresetthe{example}

\newcommand{\R}{\mathbb R}
\newcommand{\E}{\mathbb E}
\renewcommand{\P}{\mathbb P}
\renewcommand{\Re}{\operatorname{Re}}
\newcommand{\ssp}{\hspace{1pt}}

\begin{document}

\title{Perturbed Beta Corners Process}
\author{Leonid Petrov and Jiaming Xu}

\date{}

\subjclass[2020]{60B20, 60F05, 33C67}
\keywords{$\beta$-ensembles, corners processes, external source,
multivariate Bessel functions, crystallization, discrete Gaussian free field}

\begin{abstract}
	We introduce and study the perturbed $\beta$-corners
	process, a deformation of the classical $\beta$-corners
	process.  The latter is a probability measure on
	interlacing arrays of real numbers that, for the
	classical values $\beta=1,2,4$, describes the joint
	distribution of eigenvalues of principal submatrices of
	a Gaussian random matrix with the corresponding
	GOE/GUE/GSE symmetry.

	For classical $\beta$, the perturbed process arises
	from adding a deterministic diagonal matrix
	$A=\operatorname{diag}(a_1,\ldots,a_N)$ to a
	GOE/GUE/GSE matrix.  The eigenvalues of the resulting
	random matrix depend symmetrically on $a_1,\ldots,a_N$,
	but this symmetry does not extend to the whole corners
	process.

	We extend the construction to all $\beta > 0$ via
	multivariate Bessel functions, and analyze the
	crystallization ($\beta\to\infty$) of the resulting
	interlacing array, proving a law of large numbers and a
	central limit theorem in two regimes.  For fixed
	perturbation, the eigenvalue repulsion dominates, and
	the array crystallizes on the same lattice of
	polynomial-derivative roots, with the same discrete
	Gaussian free field fluctuations, as in the unperturbed
	case treated by Gorin and Marcus
	\cite{GorinMarcus2020}.  New phenomena appear in the
	second regime, where the $a_i$'s grow linearly
	in~$\beta$. Then the external source competes with the
	repulsion at leading order.  Here the array freezes on
	a deformed lattice characterized by a coupled system of
	optimality equations.  
	The fluctuations are governed by the same
	discrete Gaussian free field, now attached to the
	deformed lattice.

	The deformed lattice equations decouple
	in the special case of
	a single spike in the last coordinate. Then the
	deformed lattice is obtained explicitly by applying
	one shifted derivative
	$D_c f = f' + cf$ followed by iterated ordinary
	derivatives.
\end{abstract}

\maketitle

\setcounter{tocdepth}{1}
\tableofcontents
\setcounter{tocdepth}{4}

\section{Introduction}
\label{sec:intro}

\subsection{Overview}

We introduce and study the \emph{perturbed $\beta$-corners process},
a deformation of the Gaussian $\beta$-corners process depending on
parameters $\mathbf{a}=(a_1,\ldots,a_N)$.
For the classical values $\beta=1,2,4$,
the construction reduces to computing the joint eigenvalue distribution
of nested principal submatrices of $A + G$,
where $A=\operatorname{diag}(a_1,\ldots,a_N)$ is deterministic
and $G$ is a matrix
from the Gaussian Orthogonal/Unitary/Symplectic 
Ensemble (GOE/GUE/GSE),
respectively.
We extend the definition to all $\beta>0$ by analytic continuation,
in which the group-theoretic
Harish-Chandra--Itzykson--Zuber (HCIZ) integral is replaced by
multivariate Bessel functions, the symmetric eigenfunctions of the
Dunkl operators.
We then study the zero-temperature limit $\beta\to\infty$
at fixed particle number~$N$.

\medskip

The Gaussian $\beta$-ensemble (G$\beta$E) is the joint
eigenvalue density proportional to \begin{equation*} \prod_{1\le
i<j\le N}|\lambda_i-\lambda_j|^\beta\,e^{-(1/2)\sum
\lambda_i^2}, \end{equation*} which for $\beta=1,2,4$
corresponds to the eigenvalues of GOE/GUE/GSE matrices.

\begin{figure}[ht]
	\centering
	\begin{adjustbox}{max width=\textwidth}
	\begin{tikzpicture}[x=1cm,y=1cm,line join=round]
		\fill[black!12] (0,0.45) rectangle (2.25,2.7);
		\draw (0,0) rectangle (2.7,2.7);
		\draw[very thick] (0,0.45) rectangle (2.25,2.7);
		\begin{scope}[shift={(4.6,-0.15)}]
			\foreach \xm/\xa/\xb in
				{0.95/0.3/1.25, 1.7/1.25/2.45, 2.9/2.45/3.2, 3.65/3.2/4.4, 5.2/4.4/5.6}{
				\draw[black!30] (\xm,1.15) -- (\xa,1.9);
				\draw[black!30] (\xm,1.15) -- (\xb,1.9);
			}
			\foreach \x/\i in {0.3/6, 1.25/5, 2.45/4, 3.2/3, 4.4/2, 5.6/1}{
				\fill (\x,1.9) circle (1.7pt);
				\node[above=0.4mm] at (\x,1.9) {$\lambda_{\i}$};
			}
			\foreach \x/\i in {0.95/5, 1.7/4, 2.9/3, 3.65/2, 5.2/1}{
				\fill (\x,1.15) circle (1.7pt);
				\node[below=0.4mm] at (\x,1.15) {$\mu_{\i}$};
			}
		\end{scope}
	\end{tikzpicture}
	\end{adjustbox}
	\caption{Cutting the upper left $(N-1)\times(N-1)$ corner.
	The eigenvalues $\boldsymbol{\mu}$ of the corner interlace with the
	eigenvalues $\boldsymbol{\lambda}$ of the matrix.}
	\label{fig:corner_cutting}
\end{figure}

The \emph{corners process} describes the joint distribution of
eigenvalues of nested principal submatrices
(\Cref{fig:corner_cutting}).  For $\beta=2$ (GUE), this
structure was identified by Baryshnikov~\cite{Baryshnikov_GUE2001} and
Johansson--Nordenstam~\cite{johansson2006eigenvalues}; for general
$\beta>0$, the Gaussian case was constructed by Gorin and
Shkolnikov~\cite{GorinShkolnikov2014} via multilevel Dyson Brownian
motions, and the Jacobi case by Borodin and Gorin
\cite{BorodinGorin2015}, who also proved that the global fluctuations of
the $\beta$-corners process as $N\to\infty$ and $\beta$ fixed are
governed by the Gaussian free field.  An alternative approach to
$\beta$-corners processes, based on beta integrals over Rayleigh
triangles, was developed by Neretin~\cite{Neretin2003Rayleigh}.  We note
that there is a tridiagonal random matrix model for G$\beta$E for all
$\beta>0$ introduced by Dumitriu and Edelman~\cite{dumitriu2002matrix},
which, however, cannot be used to construct the corners process, even in
the unperturbed case~\cite{GorinKleptsyn2024}.

In the log-gas picture, $\beta$ plays the role of the
inverse temperature, so that $\beta\to 0$ is the
high-temperature regime and $\beta\to\infty$ the
zero-temperature one.  The former was studied by
Benaych-Georges, Cuenca, and
Gorin~\cite{benaych2022matrix} and by Cuenca and
Do\l{}ega~\cite{cuenca2025discrete}.  The
zero-temperature regime (leading to crystallization of
the eigenvalues) of the $\beta$-corners process was
established by Gorin and Marcus~\cite{GorinMarcus2020}:
as $\beta\to\infty$ the eigenvalue array freezes on a
deterministic lattice given by iterated polynomial
derivatives, with discrete Gaussian
free field fluctuations on top of it.  This has been
further developed by Gorin and
Kleptsyn~\cite{GorinKleptsyn2024} (the G$\infty$E and
Airy$_\infty$ ensembles), and extended to Laguerre
corners by Lerner-Brecher~\cite{LernerBrecher2023}.  In
the language of finite free probability of Marcus,
Spielman, and
Srivastava~\cite{MarcusSpielmanSrivastava2022} and
Marcus~\cite{Marcus2021}, passing to eigenvalues of a corner
one size less
at
$\beta=\infty$ is the finite free projection, that is,
differentiation of the characteristic polynomial.

\medskip

Adding a deterministic matrix to a random one --- an
\emph{external source} or \emph{spike} --- is also
classical.  Baik, Ben Arous, and
P\'ech\'e~\cite{BBP2005phase} found, for sample
covariance matrices, the phase transition
at which a sufficiently strong spike makes the largest
eigenvalue detach from the bulk; the additive
counterpart, for Hermitian matrices shifted by a
finite-rank deterministic matrix, is due to
P\'ech\'e~\cite{Peche2006}.  This has led to
numerous follow-up works, which we do not survey here.

\medskip

The idea of adding a deterministic matrix was extended to
random matrix corners.  For $\beta=2$, Ferrari and
Frings~\cite{Ferrari2014PerturbedGUE} considered the
matrix diffusion $H(t)+tA$, where $H$ is the Hermitian
Brownian motion started at the null matrix, so that at
$t=1$ one recovers $A+G$.  Under $H(t)$, the eigenvalues
perform the Dyson Brownian motion
\cite{dyson1962brownian}.  Ferrari and Frings computed
the correlation kernel of the corners process of
$H(t)+tA$ at a fixed time, and identified the resulting
corners distribution with the fixed-time distribution of
a Warren process \cite{warren2005dyson} in which all
Brownian motions at level $k$ carry the same drift $a_k$
and reflect off those at level $k-1$.  The same
correlation kernel was obtained independently by Adler,
van Moerbeke, and Wang~\cite{adler2013random}.  Petrov and
Tikhonov~\cite{PetrovTikhonov2019} studied the interplay
between the symmetry of the level-$k$ eigenvalue
distribution in $a_1,\ldots,a_k$ and the failure of this
symmetry for the whole corners distribution, via certain
Markov jump operators acting on individual particles.
For general~$\beta$, 
Bloemendal and Vir\'ag~\cite{bloemendal2013limits} and
Lamarre and
Shkolnikov~\cite{LamarreShkolnikov2019} studied the edge
of the single-level Gaussian $\beta$-ensemble with one
spike (through the tridiagonal model of
\cite{dumitriu2002matrix}).  Two recent works rely on the
same machinery as the present paper, namely Dunkl
operators acting on multivariate Bessel generating
functions: Keating and Xu~\cite{KeatingXu2024} consider
the $\beta$-addition of a Gaussian and finitely many
Laguerre $\beta$-ensembles (the same operation as ours,
except that the added spectrum is random instead of the
fixed diagonal~$A$) and prove that its edge limit is the
Airy$_\beta$ process.  Gorin, Xu, and
Zhang~\cite{gorin2024airy} characterize the Airy$_\beta$
line ensemble and identify it as the edge limit of the
unperturbed G$\beta$E corners process.  In both cases the
limit is $N\to\infty$ at fixed~$\beta$.  We instead fix
$N$, let $\beta\to\infty$, and follow the perturbation
through the whole interlacing array.

\subsection{Main results}
\label{sub:main_results}

Let us outline our main contributions.

\medskip

First, for the classical values $\beta\in\{1,2,4\}$, we
derive the level-by-level Markov transition kernels
(\emph{links}) of the corners process of $C = A + G$
(where $A$ is fixed and $G$ is random GOE/GUE/GSE) by a
change-of-variables calculation
(\Cref{prop:cond_density_beta124}).  We then extend the
density of the perturbed corners process to all $\beta>0$
via multivariate Bessel functions (see
Opdam~\cite{Opdam1993} and Heckman
and Opdam~\cite{HeckmannOpdam1997}), which replace the
classical HCIZ integrals available only for
$\beta=1,2,4$.  The resulting definition of the perturbed
$\beta$-corners process
(\Cref{def:perturbed_beta_corners}) recovers the
classical $\beta$ formulas and preserves the interlacing
structure.

\medskip 

Second, we analyze the fixed $N$, $\beta\to\infty$ limit
of the perturbed $\beta$-corners process
with a fixed top row
in two regimes:
\begin{enumerate}[$\bullet$]
\item \emph{Fixed perturbation} (all $a_i$ fixed), when the
eigenvalue repulsion dominates.
The corners eigenvalues crystallize on the same lattice as in the
unperturbed model of Gorin and Marcus~\cite{GorinMarcus2020}, and the
perturbation becomes invisible
(\Cref{thm:crystallization_fixed}).
The rescaled fluctuations converge to the
discrete Gaussian free field described in~\cite{GorinMarcus2020}.
\item \emph{Scaled perturbation}
($a_i=\frac{\beta}{2}\mathsf{a}_i$), when
the external source competes with the repulsion.
This regime 
produces a \emph{deformed} crystal lattice whose positions
are the unique solution of a coupled system of
optimality equations
(\Cref{def:deformed_lattice} and
\Cref{thm:crystallization_scaled}).
The eigenvalue fluctuations converge to a deformed discrete
Gaussian free field (\Cref{thm:CLT_scaled}).
When the perturbation is constant
($\mathsf{a}_1=\cdots=\mathsf{a}_N$),
we recover the same limits as in the unperturbed case.
\end{enumerate}

For the asymptotic analysis as $\beta\to\infty$, a key
simplification is that the multivariate Bessel function
of the top row appears in the perturbed G$\beta$E density
and, inverted, in the perturbed links, so it cancels from
their product. The joint density is then governed by an
explicit log-gas energy (which we refer to as the
effective Hamiltonian).

In both cases (fixed and scaled perturbation), 
the array of the perturbed
$\beta$-corners process crystallizes on the minimizer of
the effective Hamiltonian. In the unperturbed
case, treated in~\cite{GorinMarcus2020}, the minimizer is
explicit: it is the lattice of roots of iterated
polynomial derivatives. The deformed lattice admits no
such formula in general (but does so for a single spike, see \Cref{sec:shifted_derivative}).
We proceed by showing the strict convexity of
the effective Hamiltonian (and, moreover, that it blows
up at the boundary of the Gelfand--Tsetlin polytope);
see \Cref{prop:strict_convexity} and 
\Cref{lem:boundary_blowup}.
The unique global minimizer is then the deformed lattice.
Once the convexity is established, the law of large
numbers and the central limit theorem follow by a
standard Laplace-type argument
\cite{Wong2001}*{Chapter~9}. See
\Cref{rmk:convexity_vs_GM} for a detailed comparison of
our technique with the approach of
\cite{GorinMarcus2020}.

In
\Cref{sec:up_transitions}, we take
the same zero-temperature limit 
in the other direction,
of the conditional distribution of 
the top row $\boldsymbol{\lambda}$ given the lower
row $\boldsymbol{\mu}$ (cf.~\Cref{fig:corner_cutting}), 
in both regimes of fixed and scaled perturbation.
These limits of upward transitions are powered by
Rodrigues-type
weighted 
derivatives $w^{-1}(wf)'$, where $w$ depends on the
regime and can be Gaussian, exponential, or constant.

\subsection{Further directions}
\label{sec:further_directions}

We work at fixed $N$ throughout; the $N\to\infty$ limit
of the perturbed $\beta$-corners process is out of scope
of this paper.  In particular, we do not consider a
possible BBP-type transition for a perturbed version of
the Airy$_\beta$ line ensemble.  For a single level and a
single spike this transition is
known~\cite{bloemendal2013limits},~\cite{LamarreShkolnikov2019},
and for finitely many spikes it is known at
$\beta=1,2,4$~\cite{bloemendal2016limits}.  Extending
this to several spikes at general $\beta$, and to the
whole interlacing array rather than its top row alone,
remains open.  We also do not know whether the
distributional symmetry of the perturbed GUE corners
process discovered in~\cite{PetrovTikhonov2019} extends
to general~$\beta$; their construction rests on the
uniform Gibbs property of the GUE, which nontrivially
deforms for general~$\beta$.  Two further extensions are
a perturbed \emph{Jacobi} process, whose eigenvalues are
confined to a compact interval and which carries
additional boundary parameters, and the high-temperature
regime $\beta\to0$, where the repulsion weakens instead
of freezing the array. Matrix addition in that regime is
treated by Benaych-Georges, Cuenca, and
Gorin~\cite{benaych2022matrix}.

\subsection{Notation}

We collect the main notation used throughout the paper.
The parameter $\beta>0$ is the inverse temperature
(with $\beta=1,2,4$ corresponding to the GOE, GUE, and GSE, respectively),
and $N\ge 1$ is the matrix size, which remains fixed
throughout the asymptotic analysis.

Bold letters denote ordered tuples:
$\boldsymbol{\lambda}=(\lambda_1\ge\cdots\ge\lambda_N)$,
$\boldsymbol{\mu}=(\mu_1\ge\cdots\ge\mu_{N-1})$,
$\mathbf{a}=(a_1,\ldots,a_N)$ (perturbation parameters),
$\mathbf{z}=(z_1>\cdots>z_N)$ (fixed top row).
The interlacing array has levels
$\mathbf{y}^k=(y^k_1\ge\cdots\ge y^k_k)$ for $k=1,\ldots,N$,
and we write $\mathbf{Y}=(\mathbf{y}^1,\ldots,\mathbf{y}^{N-1})$
for the collection of lower levels.

Interlacing $\boldsymbol{\mu}\prec\boldsymbol{\lambda}$
means $\lambda_1\ge\mu_1\ge\lambda_2\ge\cdots\ge\mu_{N-1}\ge\lambda_N$.
The arrays $\mathbf{Y}$ satisfying
$\mathbf{y}^1\prec\cdots\prec\mathbf{y}^{N-1}\prec\mathbf{z}$
with the top row $\mathbf{z}$ held fixed form the
\emph{Gelfand--Tsetlin polytope}.
We write $\mathbf{1}_{A}$ for the indicator of an event or condition~$A$.
For a tuple $\mathbf{w}$ of distinct reals we write
$V(\mathbf{w})=\prod_{i<j}|w_i-w_j|$ for its Vandermonde.
The notation $\coloneqq$ stands for ``is defined as.''

Key objects defined in the text:
$B_{\mathbf{a}}(\mathbf{z};\beta)$ is the multivariate Bessel function
(\Cref{sec:bessel_gf}),
$f_{\mathbf{a}}$ is the perturbed G$\beta$E density
\eqref{eq:density_perturbed_GbetaE},
$\Lambda_{\mathbf{a}}$ is the perturbed link
\eqref{eq:perturbed_corners_conditional},
and $F_{\mathbf{a}}=f_{\mathbf{a}}\cdot\Lambda_{\mathbf{a}}$
is the joint density of the perturbed corners process
(\Cref{def:perturbed_beta_corners}).
In \Cref{sec:beta_inf_trivial,sec:beta_inf_scaled} we reserve sans-serif letters for
deterministic objects, italic ones remaining random:
$\mathsf{y}^k_i$ denotes the unperturbed deterministic lattice
(\Cref{def:deterministic_lattice}),
and $\bar{\mathsf{y}}^k_i$ the deformed lattice
(\Cref{def:deformed_lattice}) arising under
the scaled perturbation $a_i=\frac{\beta}{2}\mathsf{a}_i$
with $\mathsf{a}=(\mathsf{a}_1,\ldots,\mathsf{a}_N)$.
The capital letters denote the whole arrays: the $\infty$-corners lattice
$\mathsf{Y}=\{\mathsf{y}^k_i\}$ and the deformed lattice
$\bar{\mathsf{Y}}=\{\bar{\mathsf{y}}^k_i\}$.
The shifted derivative operator is
$D_c f = f' + cf$~\eqref{eq:shifted_deriv_operator}.

\subsection{Outline}

The remainder of the paper is organized as follows.
\Cref{sec:beta124} derives the perturbed corners process
for $\beta=1,2,4$ from the matrix model, and
\Cref{sec:beta_general} extends it to all $\beta>0$
via multivariate Bessel functions.
The zero-temperature analysis fills the last three sections.
A fixed perturbation leaves the unperturbed lattice in place
(\Cref{sec:beta_inf_trivial}); a perturbation growing linearly
in $\beta$ deforms it (\Cref{sec:beta_inf_scaled}).
\Cref{sec:up_transitions} considers zero-temperature
the behavior of 
the upper row given the lower row in the 
perturbed $\beta$-corners process, 
and identifies the limit in both
regimes as a weighted derivative.

\subsection*{Acknowledgements}

We are grateful to Vadim Gorin for helpful discussions.
LP was partially supported by the NSF grant DMS-2153869
and by the Simons Foundation Travel Support for Mathematicians
award SFI-MPS-TSM-00013561.

\subsection*{AI statement.} 
All the ideas in this paper are due to the authors.
Some proof details in \Cref{sec:beta_inf_scaled,sec:up_transitions}, as well as
parts of the text, were produced with AI assistance (ChatGPT Pro GPT-5.6 by OpenAI and
Claude Code Opus~4.8 and Fable~5 by Anthropic).
We checked every statement and every
argument, and the responsibility for the correctness of the paper is ours
alone.

\section{Derivation for $\beta=1,2,4$ from Gaussian matrix models}
\label{sec:beta124}

\subsection{Setup}
Consider the matrix addition 
\begin{equation}
\label{eq:perturbed_GOE_GUE_GSE}
		C=A+G,
\end{equation}
where $A=\operatorname{diag}(a_{1},\ldots,a_{N})$ with
fixed deterministic eigenvalues $a_{1},\ldots,a_{N}\in
\R$, and $G=\frac{X+X^{*}}{2}$ is the $N\times N$
GOE/GUE/GSE matrix.  Here $X^*$ denotes the transpose for
the GOE ($\beta=1$, real entries), the conjugate
transpose for the GUE ($\beta=2$, complex entries), and
the quaternionic conjugate transpose for the GSE
($\beta=4$, quaternionic entries).  The matrix $X$ is
$N\times N$, with independent identically distributed
real/complex/qua\-ter\-nionic Gaussian entries,
normalized so that $\Re(X_{ij})\sim \mathcal{N}(0,1)$ for
all $i,j=1,\ldots,N$.

Denote the eigenvalues of $C=C_{N}$ by $\boldsymbol{\lambda}=(\lambda_{1}\ge \cdots \ge\lambda_{N})$, and the eigenvalues of $C_{N-1}$,
the $(N-1)\times (N-1)$ upper left corner of $C$, by 
$\boldsymbol{\mu}=(\mu_{1}\ge \cdots \ge\mu_{N-1})$.
The goal of this section is to derive the conditional density
$h(\boldsymbol{\lambda}\mid \boldsymbol{\mu})$ of $\boldsymbol{\lambda}$ given $\boldsymbol{\mu}$
(\Cref{prop:cond_density_beta124} below).

\subsection{Diagonalization}

Let us diagonalize $C_{N-1}$ by some $U\in U_{\beta}(N-1)$ (here
$U_{\beta}(n)$ stands for the Orthogonal/Unitary/Unitary Symplectic
group of size $n$, respectively, for $\beta=1,2,4$), and conjugate
$C_{N}$ by $\begin{bmatrix} U& 0\\ 0& 1
\end{bmatrix}\in U_{\beta}(N)$.
We get
\begin{equation*}
    \tilde{C}
		\coloneqq \begin{bmatrix}
				U&0\\
				0&1
		\end{bmatrix}C\begin{bmatrix}
				U^{*}&0\\
				0&1
		\end{bmatrix}
		=\begin{bmatrix}
        \mu_{1}&0&&0& P_{1}\\
        0&\ddots &&0&P_{2}\\
        &&\ddots &&\\
        &&&\mu_{N-1}&P_{N-1}\\
        {P}^*_{1}&\ldots &\ldots &{P}^*_{N-1}&\xi_{N}
    \end{bmatrix}.
\end{equation*}
Here the last diagonal entry is $\xi_N=c_{NN}\sim \mathcal{N}(a_N,1)$, 
and
\begin{equation}
	\label{eq_Pi}
	P_i=\sum_{j=1}^{N-1}u_{ij}c_{jN},\qquad i=1,\ldots,N-1.
\end{equation}
In vector form, \eqref{eq_Pi} reads
$(P_{1},\ldots,P_{N-1})^{t}=U\,(c_{1N},\ldots,c_{N-1,N})^{t}$:
the $P_{i}$ are the entries of the last column of $C$ above the
diagonal, written in the eigenbasis of the corner~$C_{N-1}$.
Their joint distribution can be understood as follows.
First, $A$ is diagonal, so the
off-diagonal entries of $C$ coincide with those of~$G$, and their
joint law does not involve the perturbation parameters.
Second, in the GOE/GUE/GSE the entries on and above the diagonal are
independent, so these $N-1$ entries are independent of the
corner~$C_{N-1}$; since $U$ is a function of $C_{N-1}$ alone, they are
independent of~$U$ as well.
Third, the joint law of $P_1,\ldots,P_{N-1}$
is invariant under $U_{\beta}(N-1)$, so
multiplying by $U$ does not change it.
Consequently, the $P_{i}$'s are independent, with the
real/complex/quaternionic Gaussian distribution normalized so that
$\Re(P_{i})\sim \mathcal{N}(0,\frac{1}{2})$, exactly as in the
unperturbed case.

\subsection{Characteristic polynomial}

The eigenvalue equation $\operatorname{det}(x-\tilde{C})=0$,%
\footnote{For $\beta=4$ the matrix $\tilde{C}$ is quaternionic, and
$\operatorname{det}$ denotes the reduced (Moore) determinant, that is,
the monic degree-$N$ polynomial $\prod_{i=1}^{N}(x-\lambda_{i})$.
It obeys the same block Schur-complement factorization as in the real
and complex cases, so the derivation below is uniform in
$\beta=1,2,4$.}
divided by 
the product 
$\pm(x-\mu_{1})\cdots (x-\mu_{N-1})$,
takes the form
\begin{equation}\label{eq_polynomialeqn}
    \sum_{i=1}^{N-1}\frac{\xi_{i}}{x-\mu_{i}}-(x-\xi_{N})=0.
\end{equation}
The coefficients $\xi_i$, $i=1,\ldots,N-1$,
are expressed through $P_i$ \eqref{eq_Pi} as
\begin{equation*}
	\xi_i= P_i P_i^*=\|P_i\|^2\sim \sum_{j=1}^{\beta}\mathcal{N}(0,\tfrac{1}{2})^2\sim\tfrac{1}{2}\chi_\beta^2,
\end{equation*}
where $\chi_\beta^2$ is the chi-squared distribution with $\beta$ degrees of freedom.

Since $\lambda_{i}$'s are all the $N$ roots of \eqref{eq_polynomialeqn},
we have the interlacing $\boldsymbol{\mu} \prec \boldsymbol{\lambda}$
thanks to the behavior of the left-hand side
around its poles $\mu_{i}$.
Moreover, \eqref{eq_polynomialeqn} can be written in the following equivalent forms:
\begin{align}\label{eq_polynomialeqn2}
    \sum_{i=1}^{N-1}\frac{\xi_{i}}{x-\mu_{i}}
    &= -\frac{(x-\lambda_{1})\cdots (x-\lambda_{N})}{(x-\mu_{1})\cdots (x-\mu_{N-1})}+x-\xi_{N},\qquad\text{or}\\\label{eq_polynomialeqn3}
    \sum_{i=1}^{N-1}\xi_{i}\prod_{j\ne i}(x-\mu_{j})
    &=
		\prod_{i=1}^{N-1}(x-\mu_{i})(x-\xi_{N})-(x-\lambda_{1})\cdots (x-\lambda_{N}).
\end{align}
Conditioning on $\boldsymbol{\mu}$, 
identities \eqref{eq_polynomialeqn2} or \eqref{eq_polynomialeqn3}
define
a bijection
\begin{equation*}
(\xi_{1},\ldots,\xi_{N-1},\xi_N)\longleftrightarrow (\lambda_{1},\ldots,\lambda_{N}).
\end{equation*}
To find the conditional density of $\boldsymbol{\lambda}$ given $\boldsymbol{\mu}$,
it remains to calculate the Jacobian $\left|\frac{\partial \xi_{i}}{\partial \lambda_{j}}\right|_{i,j=1,\ldots,N}$.

\subsection{Jacobian}
\label{sec:Jacobian}

Setting $x= \mu_{i}$ for $i=1,\ldots,N-1$ in \eqref{eq_polynomialeqn3}, we have
\begin{equation}\label{eq_xi1}
		\xi_{i}=-\frac{(\mu_{i}-\lambda_{1})\cdots
		(\mu_{i}-\lambda_{N})}{\prod_{j\ne i}(\mu_{i}-\mu_{j})}\implies
		\frac{\partial \xi_{i}}{\partial \lambda_j}=\frac{\prod_{d\ne
		j}(\mu_{i}-\lambda_{d})}{\prod_{d\ne i }(\mu_{i}-\mu_{d})},
\end{equation}
where $j=1,\ldots ,N$.

For $i=N$, by comparing the constant terms of the two polynomials
in~$x$ in \eqref{eq_polynomialeqn3}, we get
\begin{align}
	\nonumber
	&
	(-1)^{N+1}\prod_{i=1}^{N}\lambda_{i}+(-1)^{N}\xi_{N}\prod_{i=1}^{N-1}\mu_{i}
	=
	\sum_{i=1}^{N-1}\xi_{i}(-1)^{N-2}\prod_{j\ne i}\mu_{j}
	\\
	\nonumber
	&\hspace{40pt}
	\implies \xi_{N}=\frac{\sum_{i=1}^{N-1}\xi_{i}\prod_{k\ne
	i}\mu_{k}+\prod_{i=1}^{N}\lambda_{i}}{\prod_{i=1}^{N-1}\mu_{i}}\\
	&\hspace{40pt}
	\implies \frac{\partial \xi_{N}}{\partial
	\lambda_{j}}=\frac{\sum_{i=1}^{N-1}\frac{\partial
	\xi_{i}}{\partial \lambda_{j}}\prod_{k\ne i}\mu_{k}+\prod_{d\ne
	j}\lambda_{d}}{\prod_{i=1}^{N-1}\mu_{i}}.
	\label{eq:Jac_N}
\end{align}
Here also $j=1,\ldots ,N$. 

Note that the terms in \eqref{eq:Jac_N}
involving $\frac{\partial \xi_{i}}{\partial \lambda_{j}}$
are linear combinations of the previous rows of the Jacobian matrix, and can be ignored
when calculating the determinant.
Thus, we have
\begin{equation*}
    \begin{split}
        &\operatorname{det}\left(\frac{\partial \xi_{i}}{\partial \lambda_j}\right)_{i,j=1,\ldots,N}\\
        &\hspace{10pt}=
				\frac{1}{\prod_{k=1}^{N-1}\prod_{d\ne k}(\mu_{k}-\mu_{d})}
				\times
				\prod_{k=1}^{N-1}\prod_{d=1}^{N}(\mu_{k}-\lambda_{d})
				\times\frac{\prod_{d=1}^{N}\lambda_{d}}{\prod_{k=1}^{N-1}\mu_{k}}\operatorname{det}\begin{bmatrix}
            \frac{1}{\mu_{1}-\lambda_{1}}&&\ldots&&\frac{1}{\mu_{1}-\lambda_{N}}\\[3pt]
            \ldots&&&&\ldots\\[3pt]
            \ldots&&&&\ldots\\[3pt]
            \frac{1}{\mu_{N-1}-\lambda_{1}}&&\ldots&&\frac{1}{\mu_{N-1}-\lambda_{N}}\\[6pt]
            \frac{1}{\lambda_{1}}&&\ldots&&\frac{1}{\lambda_{N}}
        \end{bmatrix}.
    \end{split}
\end{equation*}
The last determinant above is the Cauchy determinant, so it is equal to
\begin{equation*}
(-1)^{N-1}\frac{\prod_{1\le i<j\le N}(\lambda_{i}-\lambda_{j})(\mu_{j}-\mu_{i})}{\prod_{i=1}^{N}\prod_{j=1}^{N}(\lambda_{i}-\mu_{j})},
\end{equation*}
where we set $\mu_{N}=0$ for convenience. Hence, up to
$\boldsymbol{\mu}$-dependent factors,
the absolute value of the Jacobian
is the Vandermonde:
\begin{equation*}
\left|\operatorname{det}\left(\frac{\partial \xi_{i}}{\partial\lambda_{j}}\right)_{i,j=1,\ldots,N}\right|
\propto
\prod_{1\le i<j\le N}(\lambda_{i}-\lambda_{j}).
\end{equation*}

\subsection{Conditional density}

We can now put everything together 
for classical $\beta$:
\begin{proposition}
	\label{prop:cond_density_beta124}
	For $\beta=1,2$, or $4$,
	let $\boldsymbol{\lambda}=(\lambda_{1}\ge \cdots \ge \lambda_{N})$
	be the eigenvalues of the perturbed $N\times N$ GOE/GUE/GSE matrix $C=A+G$ 
	\eqref{eq:perturbed_GOE_GUE_GSE},
	and let $\boldsymbol{\mu}=(\mu_{1}\ge \cdots \ge \mu_{N-1})$
	be the eigenvalues of its $(N-1)\times (N-1)$ principal corner.
	Then
	the conditional density of
	$\boldsymbol{\lambda}$ given $\boldsymbol{\mu}$ is
	\begin{equation}\label{eq_cond_density_beta124}
		h(\boldsymbol{\lambda} \mid \boldsymbol{\mu}) \propto
		\exp\left(-\frac{1}{2}\sum_{i=1}^{N}\lambda_i^2\right)
		\exp\left(a_N \sum_{i=1}^{N}\lambda_i\right)
		\prod_{1\le i<j\le N}(\lambda_i - \lambda_j)
		\prod_{i=1}^{N-1}\prod_{j=1}^{N}|\mu_i - \lambda_j|^{\beta/2-1}
		\mathbf{1}_{\boldsymbol{\lambda} \succ \boldsymbol{\mu}},
	\end{equation}
	where the proportionality constant depends only on $\boldsymbol{\mu}$ and the parameters $a_i$.
\end{proposition}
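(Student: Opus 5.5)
Conditionally on $\boldsymbol{\mu}$ (and on the eigenbasis $U$ of the corner, which does not affect the law), the variables $(\xi_1,\ldots,\xi_N)$ are independent. We have $\xi_i=\|P_i\|^2\sim\frac12\chi^2_\beta$ for $i\le N-1$, and $\xi_N\sim\mathcal N(a_N,1)$. The plan is to write down their joint density, push it forward through the bijection $(\xi_1,\ldots,\xi_N)\leftrightarrow\boldsymbol{\lambda}$ defined by \eqref{eq_polynomialeqn3}, and multiply by the Jacobian computed in \Cref{sec:Jacobian}.

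The input density in the $\xi$ variables is
\begin{equation*}
\prod_{i=1}^{N-1}\xi_i^{\beta/2-1}e^{-\xi_i}\cdot e^{-(\xi_N-a_N)^2/2},\qquad \xi_i>0 .
\end{equation*}
Positivity of $\xi_i$ for $i\le N-1$ corresponds to the interlacing $\boldsymbol{\lambda}\succ\boldsymbol{\mu}$: by \eqref{eq_xi1}, the sign of $-\prod_d(\mu_i-\lambda_d)$ is positive exactly under interlacing. This produces the indicator $\mathbf 1_{\boldsymbol{\lambda}\succ\boldsymbol{\mu}}$. Substituting \eqref{eq_xi1} for $\xi_i$ gives $\xi_i^{\beta/2-1}=\prod_{d}|\mu_i-\lambda_d|^{\beta/2-1}$ times a $\boldsymbol{\mu}$-only factor $\prod_{j\ne i}|\mu_i-\mu_j|^{1-\beta/2}$. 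This yields the product in \eqref{eq_cond_density_beta124}.

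For the exponential factors, I express $\sum_{i\le N-1}\xi_i$ and $\xi_N$ as functions of $\boldsymbol{\lambda}$ by comparing coefficients in \eqref{eq_polynomialeqn3}. The coefficient of $x^{N-1}$ gives
\begin{equation*}
\xi_N=\sum_i\lambda_i-\sum_i\mu_i .
\end{equation*}
The coefficient of $x^{N-2}$ gives $\sum_{i\le N-1}\xi_i=e_2(\boldsymbol{\mu})+\xi_N e_1(\boldsymbol{\mu})-e_2(\boldsymbol{\lambda})$. Using $e_2=\frac12(e_1^2-p_2)$ and substituting $e_1(\boldsymbol{\lambda})=\xi_N+e_1(\boldsymbol{\mu})$, the exponent
\begin{equation*}
-\sum_{i\le N-1}\xi_i-\tfrac12\xi_N^2+a_N\xi_N
\end{equation*}
should reduce to $-\frac12\sum\lambda_i^2+a_N\sum\lambda_i$ plus terms depending only on $\boldsymbol{\mu}$ and $a_N$. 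All the $\xi_N^2$ and $\xi_N e_1(\boldsymbol{\mu})$ cross terms should cancel. This quadratic bookkeeping is the one step to check carefully.

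Finally, I multiply by the Jacobian. From \Cref{sec:Jacobian}, its absolute value is $\prod_{i<j}(\lambda_i-\lambda_j)$ up to $\boldsymbol{\mu}$-dependent factors. Strictly, that computation divided by $\prod\mu_k$ and used $\prod\lambda_d$. I would rather redo it with the row for $\xi_N$ taken as $\partial\xi_N/\partial\lambda_j=1$, read off from the $x^{N-1}$ coefficient. This avoids division by $\mu_k$ and gives a Cauchy-type determinant with a row of ones, equal to the same Vandermonde ratio. Collecting all factors gives \eqref{eq_cond_density_beta124}, with constants depending only on $\boldsymbol{\mu}$ and $a_N$.
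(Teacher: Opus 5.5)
Your proposal is correct and is essentially the paper's proof. It uses the same density for $\boldsymbol{\xi}$, the same use of \eqref{eq_xi1} to turn $\xi_i^{\beta/2-1}$ into $\prod_j|\mu_i-\lambda_j|^{\beta/2-1}$ and positivity into strict interlacing, and the same coefficient comparisons at $x^{N-1}$ and $x^{N-2}$; the cancellation you flagged does go through, leaving $-\frac12\sum_i\lambda_i^2+a_N\sum_i\lambda_i$ plus $\boldsymbol{\mu}$-terms. One small slip: it is the ratio in \eqref{eq_xi1} that is positive under interlacing, not its numerator alone, since numerator and denominator both have sign $(-1)^{i-1}$.

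Your one deviation is to read $\partial\xi_N/\partial\lambda_j=1$ off \eqref{eq_xi2} instead of off the constant term as in \eqref{eq:Jac_N}. This is valid and slightly cleaner than the paper's route. It avoids dividing by $\prod_k\mu_k$ and $\prod_d\lambda_d$, and the row of ones is the $\mu_N\to\infty$ limit of $\mu_N$ times a Cauchy row, so it yields the same Vandermonde.
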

\begin{proof}
	The variables $(\xi_1, \ldots, \xi_{N-1}, \xi_N)$ are conditionally independent given
	$\boldsymbol{\mu}$, 
	with joint density
	\begin{equation*}
		f(\boldsymbol{\xi}) \propto
		\left[\prod_{i=1}^{N-1} \xi_i^{\beta/2-1} e^{-\xi_i}\right]
		e^{-\xi_N^2/2} e^{a_N \xi_N}.
	\end{equation*}
	By the first identity in \eqref{eq_xi1}, $\xi_i$
	vanishes precisely when $\mu_i$ coincides with one of the~$\lambda_j$.
	Hence $\xi_i>0$ for all $i=1,\ldots,N-1$ iff the interlacing
	$\boldsymbol{\lambda} \succ \boldsymbol{\mu}$ is strict. The
	weak boundary, where $\xi_i=0$ for some $i$, is Lebesgue-null and
	irrelevant for the density.
	The same identity gives
	$\xi_i^{\beta/2-1} \propto \prod_{j=1}^{N}|\mu_i - \lambda_j|^{\beta/2-1}$,
	where we absorbed $\boldsymbol{\mu}$-dependent terms into the normalization constant.
	Comparing the coefficients by $x^{N-1}$ in \eqref{eq_polynomialeqn3}
	gives
	\begin{equation}\label{eq_xi2}
		\xi_N = \sum_{i=1}^{N}\lambda_i - \sum_{i=1}^{N-1}\mu_i,
	\end{equation}
	so $e^{a_N \xi_N} \propto \exp(a_N \sum_i \lambda_i)$.
	Finally,
	comparing the coefficients by $x^{N-2}$ in \eqref{eq_polynomialeqn3} yields
	the remaining terms:
	\begin{equation}\label{eq_xi3}
		\sum_{i=1}^{N-1}\xi_i =
		-\sum_{1\le i<j\le N}\lambda_i \lambda_j
		+
		\left(\sum_{i=1}^{N}\lambda_i\right)\left(\sum_{i=1}^{N-1}\mu_i\right)
		+
		\sum_{1\le i<j\le N-1}\mu_i \mu_j-
		\left( \sum_{i=1}^{N-1} \mu_i\right)^2.
	\end{equation}
	Combined with \eqref{eq_xi2}, 
	we have
	\begin{equation*}
		-\sum_{i=1}^{N-1}\xi_i-\frac{\xi_N^2}{2}=
		-\frac{1}{2}\sum_{i=1}^{N}\lambda_i^2+
		\frac{1}{2}\sum_{i=1}^{N-1}\mu_i^2.
	\end{equation*}
Multiplying by the Jacobian $\prod_{i<j}(\lambda_i - \lambda_j)$ from
\Cref{sec:Jacobian} completes the proof. 
\end{proof}

\begin{remark}
	\label{rmk:collecting_mu_terms_for_full_density}
	Collecting all the terms depending only on $\boldsymbol{\mu}$
	in the proof of \Cref{prop:cond_density_beta124},
	one can write the conditional density \eqref{eq_cond_density_beta124} more explicitly as
	\begin{equation*}
		\begin{split}
			h(\boldsymbol{\lambda} \mid \boldsymbol{\mu}) &=
			\frac{e^{-a_N^2/2}}{\Gamma(\beta/2)^{N-1}\sqrt{2\pi}}
			\prod_{i=1}^{N-1}
			\prod_{j=1}^{N}|\mu_i - \lambda_j|^{\beta/2-1}
			\prod_{1\le i<j\le N-1}(\mu_i - \mu_j)^{1-\beta}
			\prod_{1\le i<j\le N}(\lambda_i - \lambda_j)
			\\
			&\hspace{40pt}
			\times
			\exp\left( a_N\sum_{i=1}^{N}\lambda_i - a_N\sum_{i=1}^{N-1}\mu_i
			-\frac{1}{2}\sum_{i=1}^{N}\lambda_i^2+\frac{1}{2}\sum_{i=1}^{N-1}\mu_i^2 \right)\mathbf{1}_{\boldsymbol{\lambda} \succ \boldsymbol{\mu}}.
		\end{split}
	\end{equation*}
\end{remark}

\section{Perturbed corners process for general $\beta$}
\label{sec:beta_general}

We now extend the model from \Cref{sec:beta124} to general $\beta > 0$
via analytic continuation.
The key ingredients are well-known, and include the Gaussian $\beta$-ensembles
(G$\beta$E)~\cite{dumitriu2002matrix},
the $\beta$-corners process~\cite{GorinShkolnikov2014},
and the $\beta$-addition operation on random matrix spectra
\cite{GorinMarcus2020}, 
\cite{benaych2022matrix},
\cite{KeatingXu2024}, \cite{gorin2024airy}.
For general $\beta$, there is no underlying matrix model for which one can
consider the corners, so the construction relies on
multivariate Bessel generating functions.

\subsection{Multivariable Bessel generating functions}
\label{sec:bessel_gf}

Here we briefly recall the necessary properties of multivariate Bessel functions
which are $\beta$-deformations of the Harish-Chandra-Itzykson-Zuber (HCIZ) 
integrals~\cite{HarishChandra1957},~\cite{itzykson1980planar}.

\begin{remark}
	In the literature on Jack polynomials, multivariate Bessel functions, and Dunkl operators it is
	customary to use the parameters $\theta = \beta/2$ or 
	$\alpha=2/\beta$ instead of $\beta$.
	In this paper we keep the
	notation $\beta$ throughout for consistency with the random matrix formulas.
\end{remark}

Let $\mathbf{a}=(a_1,\ldots,a_N)$ be an $N$-tuple of real numbers.
The \emph{multivariate Bessel function}, denoted by $B_{\mathbf{a}}(\mathbf{z};\beta)$,
where $\mathbf{z}=(z_1,\ldots,z_N)$ and $\beta>0$,
can be defined as
the unique
(up to multiplication by a constant)
entire eigenfunction, symmetric in $z_1,\ldots,z_N$, of the
symmetric Dunkl operators~\cite{Opdam1993}
\begin{equation*}
	\mathcal{P}_k \coloneqq \mathcal{D}_1^k + \cdots + \mathcal{D}_N^k,\qquad
	\mathcal{D}_i \coloneqq \frac{\partial}{\partial z_i} + \frac{\beta}{2} \sum_{j\ne
	i} \frac{1}{z_i-z_j}(1-s_{ij}),\qquad s_{ij}\ \text{permutes}\ z_i\
	\text{and}\ z_j,\end{equation*}
such that 
\begin{equation*}
\mathcal{P}_k B_{\mathbf{a}}(\mathbf{z};\beta) = \Bigl(\sum_{i=1}^N a_i^k\Bigr) B_{\mathbf{a}}(\mathbf{z};\beta),
\qquad k=1,2,\ldots.
\end{equation*}
Alternatively, the functions $B_{\mathbf{a}}(\mathbf{z};\beta)$ arise
as
certain limits of Jack symmetric polynomials, 
see \cite{OkounkovOlshanskiJack1996}, \cite{benaych2022matrix}*{Section~2.2}.
Key properties of the multivariate Bessel functions include:
\begin{enumerate}[$\bullet$]
	\item Normalization $B_{\mathbf{a}}(\mathbf{0};\beta)=1$.\footnote{Here and 
	below, $\mathbf{0}$ stands for the tuple $(0,\ldots,0)$.}
	\item Symmetry in $z_1,\ldots,z_N$.
	\item Extension to an entire function of the $2N$ variables
	$a_1,\ldots,a_N,z_1,\ldots,z_N$ jointly; in particular,
	$B_{\mathbf{a}}(\mathbf{z};\beta)$ is continuous in the label $\mathbf{a}$
	at fixed $\mathbf{z}$ and $\beta$.
	\item Duality between the label and the argument:
	\begin{equation}
		\label{eq:bessel_duality}
		B_{\mathbf{a}}(\mathbf{z};\beta)=B_{\mathbf{z}}(\mathbf{a};\beta).
	\end{equation}
	This is manifest from the expansion of $B_{\mathbf{a}}(\mathbf{z};\beta)$ as the
	${}_{0}F_{0}^{(2/\beta)}$ hypergeometric (Jack) series, which is symmetric in the two
	sets of variables $\mathbf{a}$ and $\mathbf{z}$
	(see~\cite{OkounkovOlshanskiJack1996}*{(4.2)},~\cite{Forrester-LogGas});
	for $\beta=2$ it also follows from the HCIZ integral below.
	\item Positivity: $B_{\mathbf{a}}(\mathbf{z};\beta)>0$ for all real $\mathbf{a}$
	and all $\mathbf{z}$ with pairwise distinct coordinates, which is the only case we use.
	Indeed, by \eqref{eq:bessel_duality} it equals $B_{\mathbf{z}}(\mathbf{a};\beta)$, and the
	corners representation \eqref{eq:Bessel_corners_integral} in
	\Cref{rmk:bessel_as_partition_function} below writes the latter as the integral of a
	positive integrand against a probability density.
	The duality is what makes this representation applicable: it requires the label to have
	pairwise distinct coordinates, and in the formulas below the perturbation $\mathbf{a}$
	may have ties, while $\mathbf{z}$ does not.
\end{enumerate}
For $\beta=2$, the Bessel function admits the HCIZ integral representation:
\begin{equation*}
	B_{\mathbf{a}}(\mathbf{z};2) = \int_{U(N)} \exp\bigl(\mathrm{Tr}(UAU^*Z)\bigr)\, dU,
\end{equation*}
where $A=\operatorname{diag}(a_1,\ldots,a_N)$, $Z=\operatorname{diag}(z_1,\ldots,z_N)$,
and $dU$ is the Haar measure on the unitary group $U(N)$
normalized to have total mass $1$.

\medskip

Given a probability measure $\mu$ on the
ordered $N$-tuples
$\mathbf{b}=(b_1\ge \cdots \ge b_N)\in \R^N$,
define the \emph{Bessel generating function of $\mu$} by
\begin{equation}
	\label{eq:bessel_gf}
	G_N(\mathbf{z};\beta,\mu) \coloneqq \int_{b_1\ge \cdots \ge b_N}
	B_{\mathbf{b}}(\mathbf{z};\beta) \, d\mu(\mathbf{b}).
\end{equation}
By the symmetry and normalization of Bessel functions,
$G_N$ is symmetric in $z_1,\ldots,z_N$ and satisfies $G_N(\mathbf{0};\beta,\mu)=1$.

For $\beta=2$, 
let $M_A = UAU^*$ and $M_B = VBV^*$ be independent random Hermitian matrices
with fixed eigenvalues $\mathbf{a}$ and $\mathbf{b}$, respectively 
(and 
random eigenvectors coming from the 
independent Haar-distributed 
unitary matrices
$U,V\in U(N)$).
Let $\mathbf{c}$ denote the eigenvalues of $M_A + M_B$. Then we have
\begin{equation*}
	\E\bigl[B_{\mathbf{c}}(\mathbf{z};2) \mid \mathbf{a}, \mathbf{b}\bigr]
	= B_{\mathbf{a}}(\mathbf{z};2) \cdot B_{\mathbf{b}}(\mathbf{z};2).
\end{equation*}
For general $\beta \ne 1,2,4$, when no underlying invariant matrix model exists,
the $\beta$-addition $\boxplus_\beta$ is \emph{defined} via the multiplication of
Bessel generating functions~\cite{GorinMarcus2020},~\cite{benaych2022matrix},
see \eqref{eq:beta_addition} below.

\subsection{The G$\beta$E and perturbed density}

The Gaussian $\beta$-ensemble (G$\beta$E) is the symmetric probability measure
on ordered $N$-tuples $\mathbf{g}=(g_{1}\ge \cdots \ge g_{N})\in \R^{N}$ with density
\begin{equation}
	\label{eq:GbetaE_density_unperturbed}
   f(\mathbf{g})\coloneqq \frac{1}{Z_{N,\beta}}\prod_{1\le i<j\le N}(g_{i}-g_{j})^{\beta}\prod_{i=1}^{N}\exp\left(-\frac{1}{2}g_{i}^{2}\right), 
\end{equation}
where $Z_{N,\beta}$ is the probability normalizing constant.
The scaling inside the exponent is chosen so that the Bessel generating function
\eqref{eq:bessel_gf}
of the G$\beta$E is
\begin{equation}
\label{eq:GbetaE_bessel_gf}
G_N(\mathbf{x};\beta,\text{G}\beta\text{E}) =
\exp\Bigl(\frac{1}{2}\sum_{i=1}^{N}x_{i}^{2}\Bigr).
\end{equation}

Consider the $\beta$-addition of
an $N$-dimensional G$\beta$E random vector $\mathbf{g}$
and a deterministic $N$-tuple $\mathbf{a}=(a_{1},\ldots,a_{N})\in \R^{N}$,
denoted by
\begin{equation}
	\label{eq:beta_addition}
	\mathbf{c}=\mathbf{a}\boxplus_{\beta}\mathbf{g}.
\end{equation}
By definition, the Bessel generating function of $\mathbf{c}$
is the product
\begin{equation*}
	\E\bigl[B_{\mathbf{c}}(\mathbf{x};\beta)\bigr]\coloneqq B_{\mathbf{a}}(\mathbf{x};\beta)\cdot \exp\Bigl(\frac{1}{2}\sum_{i=1}^{N}x_{i}^{2}\Bigr).
\end{equation*}
The operation $\boxplus_{\beta}$ 
\eqref{eq:beta_addition}
is the $\beta$-extension
of the matrix addition \eqref{eq:perturbed_GOE_GUE_GSE}
from the classical values $\beta=1,2,4$.

\begin{proposition}
	\label{prop:density_perturbed_GbetaE}
    The probability density of $\mathbf{c}$ 
		\eqref{eq:beta_addition}
		has the form
	\begin{equation}
		\label{eq:density_perturbed_GbetaE}
        f_{\mathbf{a}}(\mathbf{x})\coloneqq\frac{1}{Z_{N,\beta}}\exp\Bigl(-\frac{1}{2}\sum_{i=1}^{N}a_{i}^{2}\Bigr)\exp\Bigl(-\frac{1}{2}\sum_{i=1}^{N}x_{i}^{2}\Bigr)\prod_{1\le i<j\le N}(x_{i}-x_{j})^{\beta}B_{\mathbf{a}}(\mathbf{x};\beta),
    \end{equation}
		where $Z_{N,\beta}$ is the same normalizing constant as in 
		\eqref{eq:GbetaE_density_unperturbed}.
\end{proposition}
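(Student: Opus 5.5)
The plan is to check that the candidate density $f_{\mathbf a}$ in \eqref{eq:density_perturbed_GbetaE} has Bessel generating function equal to $B_{\mathbf a}(\mathbf x;\beta)\exp(\frac12\sum x_i^2)$, and then invoke uniqueness. Uniqueness holds because a probability measure on ordered $N$-tuples with Gaussian-type tails is determined by its Bessel generating function. One way to see this is to restrict to the real arguments $\mathbf z=(t,0,\dots,0)$ (or use the symmetric Dunkl-derivatives at $\mathbf 0$) to recover all moments of symmetric functions. Because of the Gaussian decay, these moments determine the law.

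The key computation is
\[
\int_{x_1\ge\cdots\ge x_N} B_{\mathbf x}(\mathbf z;\beta)\,f_{\mathbf a}(\mathbf x)\,d\mathbf x .
\]
I would substitute the duality \eqref{eq:bessel_duality} to write $B_{\mathbf x}(\mathbf z)=B_{\mathbf z}(\mathbf x)$ and $B_{\mathbf a}(\mathbf x)=B_{\mathbf x}(\mathbf a)$. The integral then becomes
\[
\frac{e^{-\frac12\sum a_i^2}}{Z_{N,\beta}}\int \prod_{i<j}(x_i-x_j)^\beta e^{-\frac12\sum x_i^2}\,B_{\mathbf x}(\mathbf z)\,B_{\mathbf x}(\mathbf a)\,d\mathbf x .
\]
The needed input is the classical product formula (Macdonald--Opdam Gaussian integral, the Bessel analogue of the Weber integral for Hankel transforms):
\[
\frac{1}{Z_{N,\beta}}\int \prod_{i<j}|x_i-x_j|^\beta e^{-\frac12\sum x_i^2} B_{\mathbf x}(\mathbf z)B_{\mathbf x}(\mathbf a)\,d\mathbf x = e^{\frac12\sum(z_i^2+a_i^2)}\,B_{\mathbf z}(\mathbf a;\beta).
\]
This gives exactly $B_{\mathbf a}(\mathbf z)e^{\frac12\sum z_i^2}$ after cancelling $e^{-\frac12\sum a_i^2}$. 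Setting $\mathbf z=\mathbf 0$ shows simultaneously that $f_{\mathbf a}$ integrates to $1$, so it is a genuine probability density with the same constant $Z_{N,\beta}$. Positivity of $f_{\mathbf a}$ follows from the positivity of Bessel functions stated above.

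The main obstacle is justifying this Gaussian integral identity for general $\beta$. I would cite it (Baker--Forrester, R\"osler's Dunkl-kernel theory: $\int E_k(x,z)E_k(x,w)e^{-|x|^2/2}w_k(x)dx = c\,e^{(|z|^2+|w|^2)/2}E_k(z,w)$, then symmetrize). Alternatively, I would derive it as follows. The \emph{unperturbed} case $\mathbf a=\mathbf 0$ is exactly \eqref{eq:GbetaE_bessel_gf}. For general $\mathbf a$, apply the operators $\mathcal P_k$ in the $\mathbf a$ variable: both sides, viewed as functions of $\mathbf a$, are symmetric entire functions. The right side times $e^{-\frac12\sum a_i^2}$ is characterized by the eigen-relation obtained via Dunkl operators and the Gaussian conjugation $e^{-|a|^2/2}\mathcal D_i e^{|a|^2/2}=\mathcal D_i+a_i$. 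The left side also satisfies this relation by differentiating under the integral and using the eigenvalue equation for $B_{\mathbf x}(\mathbf a)$. Matching the normalizations at $\mathbf a=\mathbf 0$ then concludes. Analytic continuation in $\mathbf z$ to complex arguments is not needed; real $\mathbf z$ suffice. Convergence of all integrals follows from the Gaussian factor together with the bound $|B_{\mathbf x}(\mathbf z)|\le e^{\max_\sigma\langle x,\sigma z\rangle}$.
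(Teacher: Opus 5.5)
Your proof is correct, but it follows a different route from the paper. The paper computes nothing: it identifies $\mathbf c=\mathbf a\boxplus_\beta\mathbf g$ with the time-$1$ marginal of the $\beta$-Dyson Brownian motion started at $\mathbf a$, and reads \eqref{eq:density_perturbed_GbetaE} off the known transition density in \cite{gorin2024airy}*{(23)}.

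You instead verify directly that the candidate $f_{\mathbf a}$ has Bessel generating function $B_{\mathbf a}(\mathbf z;\beta)\,e^{\frac12\sum z_i^2}$, using the symmetrized Macdonald--Dunkl--R\"osler Gaussian product formula, and then invoke uniqueness. Your normalizations match the paper's: the Dunkl parameter is $\beta/2$ and the weight is $\prod|x_i-x_j|^\beta$. This product formula is essentially the identity behind the heat-kernel formula the paper cites, so your argument is more self-contained. In one computation it gives existence of the $\beta$-addition, positivity of $f_{\mathbf a}$, its normalization (set $\mathbf z=\mathbf 0$), and the fact that the constant is the same $Z_{N,\beta}$. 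The paper's citation is shorter, but it outsources both the identification with Dyson Brownian motion and the uniqueness of a law with a given Bessel generating function.

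Two small corrections.
\begin{enumerate}
\item Restricting to $\mathbf z=(t,0,\ldots,0)$ does not recover all symmetric moments. By \eqref{eq:Bessel_corners_integral}, $B_{\mathbf c}(t,0,\ldots,0)$ only sees the bottom particle $y^1_1$ of the corners array above $\mathbf c$, that is, one symmetric polynomial $\E[(y^1_1)^n\mid\mathbf c]$ per degree $n$. Use instead the Dunkl derivatives $\mathcal P_{k_1}\cdots\mathcal P_{k_m}$ at $\mathbf 0$, or injectivity of the Dunkl transform. Either needs exponential moments of $\mathbf c$, which follow from finiteness of its Bessel generating function, for instance at $\pm t\ssp e_1$.
\item In your alternative derivation of the Gaussian identity, the eigenvalue equation for $B_{\mathbf x}(\mathbf a)$ alone is not enough. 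You also need the dual relation, namely that multiplication by $a_i$ acts as a Dunkl operator in $\mathbf x$. You further need skew-adjointness of the Dunkl operators with respect to $\prod|x_i-x_j|^\beta\,d\mathbf x$, in order to integrate by parts. This is cleanest for the nonsymmetric kernel $E_k$: show that $e^{-\frac12\sum a_i^2}\int E_k(\mathbf x,\mathbf z)E_k(\mathbf x,\mathbf a)\,e^{-\frac12\sum x_i^2}\prod|x_i-x_j|^\beta d\mathbf x$ is a joint eigenfunction of the $\mathcal D_i$ in $\mathbf a$, then symmetrize.
\end{enumerate}
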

\begin{proof}
	The random vector $\mathbf{c}$ is equal in distribution to the marginal at
	time $1$ of the standard $\beta$-Dyson Brownian motion started from
	$\mathbf{a}$.  The density formula \eqref{eq:density_perturbed_GbetaE}
	follows from \cite{gorin2024airy}*{(23)}. For additional references, see the
	proof of Lemma~3.8 in \cite{gorin2024airy}.
\end{proof}

The single-level law \eqref{eq:density_perturbed_GbetaE} is the Gaussian
$\beta$-ensemble with external source: after the rescaling
$\mathbf{x}=\sqrt{\beta}\ssp\mathbf{u}$ and
$\mathbf{a}=\sqrt{\beta}\ssp\mathbf{f}$, it coincides with the density
studied by Desrosiers and Liu~\cite{DesrosiersLiu2015}; see
also~\cite{Forrester2013source}.
Our contribution begins with the compatible multilevel corners law of
\Cref{def:perturbed_beta_corners}.

\subsection{The $\beta$-corners process}

The Gaussian (Hermite) $\beta$-corners process is a probability distribution on interlacing arrays
$\mathbf{y}^1 \prec \mathbf{y}^2 \prec \cdots \prec \mathbf{y}^{N-1} \prec \mathbf{y}^N$
with levels 
\begin{equation*}
	\mathbf{y}^k = (y^k_1 \ge y^k_2 \ge \cdots \ge y^k_k)\in \R^k.
\end{equation*}
For the classical values $\beta = 1, 2, 4$, the corners process describes the joint distribution
of eigenvalues of all principal submatrices of a GOE/GUE/GSE matrix, respectively.

Consider first the unperturbed $\beta$-corners process
(see \cite{GorinShkolnikov2014}; the general $\beta$ density
already appears in \cite{Neretin2003Rayleigh})
with fixed top row
$\mathbf{y}^N = \mathbf{z} = (z_1 > \cdots > z_N)$.\footnote{The 
density requires a strict top row, since the 
normalizing constant includes $\prod_{i<j}(z_i-z_j)^{\beta-1}$.
On the other hand, under the G$\beta$E density, the coordinates of $\mathbf{z}$ are distinct with probability one.}
Denote by
$\mathbf{Y} = (\mathbf{y}^1, \ldots, \mathbf{y}^{N-1})$ the collection of lower levels.
The conditional density of $\mathbf{Y}$ given $\mathbf{z}$ is, by definition,
\begin{equation}
	\label{eq:unperturbed_links}
	\Lambda(\mathbf{Y};\mathbf{z})
	=
	\frac{\mathbf{1}_{\mathbf{y}^{1}\prec \cdots \prec \mathbf{y}^{N-1}\prec \mathbf{z}}}
	{\tilde{Z}_{N,\beta,\mathbf{z}}}
	\prod_{k=1}^{N-1}\Biggl(
		\prod_{1\le i<j\le k}(y^{k}_{i}-y_{j}^{k})^{2-\beta}
		\prod_{p=1}^{k}\prod_{q=1}^{k+1}|y_{p}^{k}-y_{q}^{k+1}|^{\beta/2-1}
	\Biggr)
	.
\end{equation}
The normalizing constant is given by the
Dixon--Anderson formula~\cite{dixon1905generalization},~\cite{Anderson1991Selberg}
(see also~\cite{Forrester-LogGas}*{Chapter~4}):
\begin{equation}
	\label{eq:Dixon_Anderson}
	\tilde{Z}_{N,\beta,\mathbf{z}} = \prod_{k=1}^{N} \frac{\Gamma(\beta/2)^k}{\Gamma(k\beta/2)} \cdot \prod_{1\le i < j \le N} (z_i - z_j)^{\beta-1}.
\end{equation}
The joint density of the full unperturbed Gaussian $\beta$-corners process 
thus has the form
\begin{equation*}
F(\mathbf{Y},\mathbf{z})=
	f(\mathbf{z})\cdot \Lambda(\mathbf{Y};\mathbf{z}),
\end{equation*}
where $f$ is the G$\beta$E density \eqref{eq:GbetaE_density_unperturbed}.

\begin{remark}
\label{rmk:bessel_as_partition_function}
	The $\beta$-corners process provides an integral representation for the multivariate Bessel function~\cite{GuhrKohler2002}:
	\begin{equation}
		\label{eq:Bessel_corners_integral}
		B_{\mathbf{z}}(\mathbf{x};\beta)
		= 
		\int 
		\Lambda(\mathbf{Y};\mathbf{z})\cdot
		\exp\left(
			\sum_{k=1}^{N} x_k \cdot \Bigl(\sum_{i=1}^{k} y_i^k - \sum_{j=1}^{k-1} y_j^{k-1}\Bigr)
		\right)
		d\mathbf{Y},
	\end{equation}
	where the integral is with respect to the $\beta$-corners process 
	conditioned on having the
	fixed top row
	$\mathbf{y}^N = \mathbf{z}$, see
	\eqref{eq:unperturbed_links}.
\end{remark}

\subsection{Perturbed $\beta$-corners process}

Combining the perturbed G$\beta$E \eqref{eq:density_perturbed_GbetaE} with the unperturbed links
\eqref{eq:unperturbed_links}, we now give the main definition:

\begin{definition}
	\label{def:perturbed_beta_corners}
	The \emph{perturbed Gaussian $\beta$-corners process}
	with perturbation $\mathbf{a}=(a_1,\ldots,a_N)$
	is a random interlacing array
	$\mathbf{y}^{1} \prec \mathbf{y}^{2} \prec \cdots \prec \mathbf{y}^{N-1} \prec \mathbf{y}^{N} = \mathbf{z}$,
	with joint density
	\begin{equation}
		\label{eq:perturbed_corners_density}
		F_{\mathbf{a}}(\mathbf{Y},\mathbf{z})\coloneqq
		f_{\mathbf{a}}(\mathbf{z})\cdot \Lambda_{\mathbf{a}}(\mathbf{Y};\mathbf{z}),
	\end{equation}
	where $f_{\mathbf{a}}$ is the perturbed G$\beta$E density \eqref{eq:density_perturbed_GbetaE}, and
	the perturbed links are defined as
	\begin{equation}
		\label{eq:perturbed_corners_conditional}
			\Lambda_{\mathbf{a}}(\mathbf{Y};\mathbf{z})
		 \coloneqq
			\Lambda(\mathbf{Y};\mathbf{z})\cdot
		 \frac{1}{B_{\mathbf{a}}(\mathbf{z};\beta)}
		 \exp\left[a_{N}\sum_{i=1}^{N}z_{i}+\sum_{k=1}^{N-1}\sum_{i=1}^{k}y^{k}_{i}(a_{k}-a_{k+1})\right].
	\end{equation}
\end{definition}

The following statement shows that the perturbed corners process has the desired
Bessel generating function:
\begin{proposition}
	\label{prop:perturbed_Bessel_identity}
	The perturbed Gaussian $\beta$-corners process satisfies,
	for every $\mathbf{x}\in\R^{N}$,
    \begin{equation*}
\int\int
				F_{\mathbf{a}}(\mathbf{Y},\mathbf{z})
				\exp\Bigl[x_{N}\sum_{i=1}^{N}z_{i}+\sum_{k=1}^{N-1}\sum_{i=1}^{k}y^{k}_{i}(x_{k}-x_{k+1})\Bigr]
				\, d\mathbf{Y}\,d\mathbf{z}
				=
				\exp\Bigl(\frac{1}{2}\sum_{i=1}^{N}x_{i}^{2}\Bigr)\exp\Bigl(\sum_{i=1}^{N}a_{i}x_{i}\Bigr),
    \end{equation*}
		where the integration is over all interlacing arrays
		$\mathbf{y}^{1} \prec \mathbf{y}^{2} \prec \cdots \prec \mathbf{y}^{N-1} \prec \mathbf{z}$,
		and we denote $\mathbf{Y}=(\mathbf{y}^{1},\ldots,\mathbf{y}^{N-1})$.
\end{proposition}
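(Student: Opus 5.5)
The plan is to integrate out the lower levels $\mathbf{Y}$ first, at fixed top row $\mathbf{z}$, and then integrate over $\mathbf{z}$ using the perturbed G$\beta$E density. The observation that makes this work is that the two exponentials combine. The factor in the perturbed link \eqref{eq:perturbed_corners_conditional} and the test exponential in the statement have the same shape. For $\mathbf{t}=\mathbf{a}+\mathbf{x}$, their product is
\begin{equation*}
\exp\Bigl[t_N\sum_{i=1}^N z_i+\sum_{k=1}^{N-1}\sum_{i=1}^k y^k_i(t_k-t_{k+1})\Bigr].
\end{equation*}
Summation by parts (Abel summation over levels) shows this equals
\begin{equation*}
\exp\Bigl[\sum_{k=1}^N t_k\Bigl(\sum_{i=1}^k y^k_i-\sum_{j=1}^{k-1}y^{k-1}_j\Bigr)\Bigr],
\end{equation*}
with $\mathbf{y}^N=\mathbf{z}$. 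This is exactly the exponential appearing in \Cref{rmk:bessel_as_partition_function}.

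First, integrating against $\Lambda(\mathbf{Y};\mathbf{z})\,d\mathbf{Y}$ and using \eqref{eq:Bessel_corners_integral} gives $B_{\mathbf{z}}(\mathbf{a}+\mathbf{x};\beta)$. By the duality \eqref{eq:bessel_duality}, this equals $B_{\mathbf{a}+\mathbf{x}}(\mathbf{z};\beta)$. The inner integral is therefore
\begin{equation*}
\frac{B_{\mathbf{a}+\mathbf{x}}(\mathbf{z};\beta)}{B_{\mathbf{a}}(\mathbf{z};\beta)}.
\end{equation*}
The denominator is positive for $\mathbf{z}$ with distinct coordinates, which holds almost everywhere. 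Second, multiplying by $f_{\mathbf{a}}(\mathbf{z})$ from \eqref{eq:density_perturbed_GbetaE} cancels $B_{\mathbf{a}}(\mathbf{z};\beta)$. We are left with
\begin{equation*}
e^{-\frac12\sum a_i^2}\int \frac{1}{Z_{N,\beta}}\prod_{i<j}(z_i-z_j)^\beta e^{-\frac12\sum z_i^2}B_{\mathbf{a}+\mathbf{x}}(\mathbf{z};\beta)\,d\mathbf{z}.
\end{equation*}
Two routes lead from here. The first is to apply \Cref{prop:density_perturbed_GbetaE} with label $\mathbf{a}+\mathbf{x}$: $f_{\mathbf{a}+\mathbf{x}}$ is a probability density, so the integral equals $e^{\frac12\sum(a_i+x_i)^2}$. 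The second is to use duality once more and recognize the G$\beta$E Bessel generating function \eqref{eq:GbetaE_bessel_gf} evaluated at $\mathbf{a}+\mathbf{x}$. Either way, multiplying by $e^{-\frac12\sum a_i^2}$ gives $\exp(\frac12\sum x_i^2+\sum a_ix_i)$, which is the claimed right-hand side.

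The main points needing care are interchanging the order of integration and the applicability of \eqref{eq:Bessel_corners_integral} and of \Cref{prop:density_perturbed_GbetaE} for an arbitrary real label $\mathbf{a}+\mathbf{x}$, possibly with ties. Everything is nonnegative, so Tonelli justifies the iterated integration. The representation \eqref{eq:Bessel_corners_integral} is applied with the strict top row $\mathbf{z}$ as the Bessel label and $\mathbf{a}+\mathbf{x}$ as the argument, so ties in $\mathbf{a}+\mathbf{x}$ cause no problem. \Cref{prop:density_perturbed_GbetaE} is stated for arbitrary real $\mathbf{a}$, so it applies directly to the shifted label. The only genuinely computational step is the Abel summation identity; I would check it by comparing the coefficients of each $y^k_i$ on both sides.
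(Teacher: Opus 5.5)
Your proposal is correct and is essentially the paper's proof: integrate out $\mathbf{Y}$ using the corners representation \eqref{eq:Bessel_corners_integral} and duality to get $B_{\mathbf{a}+\mathbf{x}}(\mathbf{z};\beta)/B_{\mathbf{a}}(\mathbf{z};\beta)$, then cancel $B_{\mathbf{a}}$ against $f_{\mathbf{a}}$ and finish with duality and the G$\beta$E Bessel generating function \eqref{eq:GbetaE_bessel_gf}, which is your second route. Your explicit Abel summation and your remarks on Tonelli and on ties in $\mathbf{a}+\mathbf{x}$ spell out steps the paper leaves implicit.
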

\begin{proof}
    By \eqref{eq:Bessel_corners_integral} and the duality \eqref{eq:bessel_duality}
    (which lets us write $B_{\mathbf{z}}(\mathbf{a};\beta)=B_{\mathbf{a}}(\mathbf{z};\beta)$),
		we have
    \begin{equation}
		\label{eq:perturbed_Bessel_identity_proof_first_display}
		\int \Lambda(\mathbf{Y};\mathbf{z})\exp\Bigl[\sum_{k=1}^{N-1}\sum_{i=1}^{k}y^{k}_{i}(a_{k}-a_{k+1})\Bigr]
		\ssp d\mathbf{Y}
		=
		B_{\mathbf{a}}(\mathbf{z};\beta)\exp\Bigl(-a_{N}\sum_{i=1}^{N}z_{i}\Bigr).
	\end{equation}
	Therefore,
	\begin{align*}
        &\int \Lambda_{\mathbf{a}}(\mathbf{Y};\mathbf{z})\exp\Bigl[\sum_{k=1}^{N-1}\sum_{i=1}^{k}y^{k}_{i}(x_{k}-x_{k+1})\Bigr]\,d\mathbf{Y}\\
        &\hspace{20pt}=\frac{1}{B_{\mathbf{a}}(\mathbf{z};\beta)}\exp\Bigl(a_{N}\sum_{i=1}^{N}z_{i}\Bigr)B_{\mathbf{a}+\mathbf{x}}(\mathbf{z};\beta)\exp\Bigl(-(a_{N}+x_{N})\sum_{i=1}^{N}z_{i}\Bigr)\\
        &\hspace{20pt}=\frac{B_{\mathbf{a}+\mathbf{x}}(\mathbf{z};\beta)}{B_{\mathbf{a}}(\mathbf{z};\beta)}\exp\Bigl(-x_{N}\sum_{i=1}^{N}z_{i}\Bigr).
    \end{align*}
		Let us multiply by the remaining $\mathbf{z}$-dependent factors and integrate over $\mathbf{z}$.
		We have
    \begin{align*}
        &\int \frac{B_{\mathbf{a}+\mathbf{x}}(\mathbf{z};\beta)}{B_{\mathbf{a}}(\mathbf{z};\beta)}f_{\mathbf{a}}(\mathbf{z})d\mathbf{z}\\
		&\hspace{20pt}=\int \frac{1}{Z_{N,\beta}}\exp\Bigl(-\frac{1}{2}\sum_{i=1}^{N}a_{i}^{2}\Bigr)\exp\Bigl(-\frac{1}{2}\sum_{i=1}^{N}z_{i}^{2}\Bigr)\prod_{1\le i<j\le N}(z_{i}-z_{j})^{\beta}B_{\mathbf{a}+\mathbf{x}}(\mathbf{z};\beta)d\mathbf{z}\\
        &\hspace{20pt}=\exp\Bigl(-\frac{1}{2}\sum_{i=1}^{N}a_{i}^{2}\Bigr)\exp\Bigl(\frac{1}{2}\sum_{i=1}^{N}(a_{i}+x_{i})^{2}\Bigr)
		=\exp\Bigl(\frac{1}{2}\sum_{i=1}^{N}x_{i}^{2}\Bigr)\exp\Bigl(\sum_{i=1}^{N}a_{i}x_{i}\Bigr),
    \end{align*}
		where the last line follows from the duality \eqref{eq:bessel_duality}
		and the Bessel generating function property~\eqref{eq:GbetaE_bessel_gf}
		of the unperturbed G$\beta$E.
	This completes the proof.
\end{proof}

\subsection{Matching with matrix models}
\label{sec:match_matrix_models}

Let us verify that 
\Cref{def:perturbed_beta_corners}
reduces to the matrix model construction of \Cref{sec:beta124}
when $\beta=1,2$, or $4$.

Expanding the joint density \eqref{eq:perturbed_corners_density}, we have
\begin{equation}
\label{eq:perturbed_corners_density_explicit}
\begin{split}
		F_{\mathbf{a}}(\mathbf{Y},\mathbf{z})&=\frac{1}{Z_{N,\beta}}\ssp
		\Lambda(\mathbf{Y};\mathbf{z})
		\exp\Bigl[a_{N}\sum_{i=1}^{N}z_{i}+\sum_{k=1}^{N-1}\sum_{i=1}^{k}y^{k}_{i}(a_{k}-a_{k+1})\Bigr]
		\\&\hspace{80pt}\times
		\exp\Bigl(-\frac{1}{2}\sum_{i=1}^{N}(a_{i}^{2}+z_{i}^{2})\Bigr)
		\prod_{1\le i<j\le N}(z_{i}-z_{j})^{\beta}.
	\end{split}
\end{equation}
Since $\Lambda(\mathbf{Y};\mathbf{z})$ 
\eqref{eq:unperturbed_links}--\eqref{eq:Dixon_Anderson}
contains the $\mathbf{z}$-dependent factor
\begin{equation*}
	\prod_{1\le i<j\le N}(z_{i}-z_{j})^{1-\beta}\prod_{p=1}^{N-1}\prod_{q=1}^{N}|y^{N-1}_{p}-z_{q}|^{\beta/2-1}\mathbf{1}_{\mathbf{z}\succ \mathbf{y}^{N-1}},
\end{equation*}
the conditional density of $\mathbf{z}$ given the lower levels is
\begin{equation}\label{eq_transitionkernel}
	h(\mathbf{z}\mid \mathbf{Y})
	\propto
	\exp\Bigl(-\frac{1}{2}\sum_{i=1}^{N}z_{i}^{2}\Bigr)
	\exp\Bigl(a_{N}\sum_{i=1}^{N}z_{i}\Bigr)
	\prod_{1\le i<j\le N}(z_{i}-z_{j})
	\prod_{p=1}^{N-1}\prod_{q=1}^{N}|y^{N-1}_{p}-z_{q}|^{\beta/2-1}
	\mathbf{1}_{\mathbf{z}\succ \mathbf{y}^{N-1}}.
\end{equation}
This depends on $\mathbf{Y}$ only through $\mathbf{y}^{N-1}$, and it matches
the conditional density \eqref{eq_cond_density_beta124}
from \Cref{prop:cond_density_beta124}.
That identifies the top transition. It remains to check that the levels below
the top form a perturbed corners process of size $N-1$.

\begin{lemma}
	\label{lem:corners_consistency}
	Let $N\ge2$ and write $\mathbf{a}^{(k)}=(a_{1},\ldots,a_{k})$.
	\begin{enumerate}[\normalfont(i)]
		\item For $\beta>0$ and $1\le k\le N-1$, the conditional density of
		$(\mathbf{y}^{1},\ldots,\mathbf{y}^{k})$ given
		$(\mathbf{y}^{k+1},\ldots,\mathbf{y}^{N})$ under
		\Cref{def:perturbed_beta_corners} is
		$\Lambda_{\mathbf{a}^{(k+1)}}(\ \cdot\ ;\mathbf{y}^{k+1})$.
		\item For $\beta=1,2,4$ the marginal density of $\mathbf{y}^{N-1}$ is
		$f_{\mathbf{a}^{(N-1)}}$, and $(\mathbf{y}^{1},\ldots,\mathbf{y}^{N-1})$
		is the perturbed $\beta$-corners process with perturbation
		$\mathbf{a}^{(N-1)}$.
	\end{enumerate}
\end{lemma}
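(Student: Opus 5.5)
For part (i), I will factor the joint density \eqref{eq:perturbed_corners_density_explicit} level by level. The unperturbed links \eqref{eq:unperturbed_links} are a product over $k$ of factors that each involve only two consecutive levels. By the Dixon--Anderson formula \eqref{eq:Dixon_Anderson}, the normalizing constant splits as $\tilde Z_{N,\beta,\mathbf z}=\tilde Z_{k+1,\beta,\mathbf y^{k+1}}\times(\text{a function of }\mathbf y^{k+1},\dots,\mathbf y^N)$. Hence $\Lambda(\mathbf Y;\mathbf z)=\Lambda(\mathbf y^1,\dots,\mathbf y^k;\mathbf y^{k+1})\cdot R$, where $R$ depends only on levels $k+1,\dots,N$.

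The exponential factor also splits:
\[
a_N\sum_i z_i+\sum_{m=1}^{N-1}(a_m-a_{m+1})|\mathbf y^m|
=\Bigl[a_{k+1}|\mathbf y^{k+1}|+\sum_{m=1}^{k}(a_m-a_{m+1})|\mathbf y^m|\Bigr]+(\text{terms in levels}\ge k+1),
\]
where $|\mathbf y^m|=\sum_i y^m_i$. This uses the telescoping identity $a_N|\mathbf z|+\sum_{m\ge k+1}(a_m-a_{m+1})|\mathbf y^m|-a_{k+1}|\mathbf y^{k+1}|$, which is a function of the upper levels only.

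So the conditional density of the lower levels given the upper ones is proportional to $\Lambda_{\mathbf a^{(k+1)}}(\,\cdot\,;\mathbf y^{k+1})$, up to a factor depending only on $\mathbf y^{k+1}$. The Bessel factor $1/B_{\mathbf a^{(k+1)}}(\mathbf y^{k+1};\beta)$ is exactly such a factor. To identify the normalization I apply \eqref{eq:perturbed_Bessel_identity_proof_first_display} with $N$ replaced by $k+1$: it shows that $\Lambda_{\mathbf a^{(k+1)}}(\,\cdot\,;\mathbf y^{k+1})$ integrates to $1$. This step uses positivity of $B$ for a strict top row, which holds almost surely.

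For part (ii), I will integrate out $\mathbf z$ from $F_{\mathbf a}$ at the classical $\beta$. Using \eqref{eq_transitionkernel} together with part (i), the law of $(\mathbf y^1,\dots,\mathbf y^{N-1})$ equals $\Lambda_{\mathbf a^{(N-1)}}(\,\cdot\,;\mathbf y^{N-1})$ times the marginal of $\mathbf y^{N-1}$. It therefore suffices to show that this marginal is $f_{\mathbf a^{(N-1)}}$. At $\beta=1,2,4$ I will argue through the matrix model, and there are two ways to do it.

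The first route is to observe that $F_{\mathbf a}$ has the top marginal $f_{\mathbf a}$, which is the eigenvalue law of $A+G$, and the top transition \eqref{eq_transitionkernel} matching \Cref{prop:cond_density_beta124}. That alone is not quite enough, so I prefer a direct route. The corner $C_{N-1}=A^{(N-1)}+G_{N-1}$ is again a perturbed GOE/GUE/GSE matrix, so its eigenvalues have density $f_{\mathbf a^{(N-1)}}$ by \Cref{prop:density_perturbed_GbetaE}. The joint law of $(\boldsymbol\mu,\boldsymbol\lambda)$ in the matrix model is therefore $f_{\mathbf a^{(N-1)}}(\boldsymbol\mu)\,h(\boldsymbol\lambda\mid\boldsymbol\mu)$, and its $\boldsymbol\lambda$-marginal is $f_{\mathbf a}$.

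The remaining task is to show that the pair $(\mathbf y^{N-1},\mathbf z)$ under $F_{\mathbf a}$ has this same joint law. Both joint laws share the conditional $h(\mathbf z\mid\mathbf y^{N-1})$, which is the content of the matching of \eqref{eq_transitionkernel} with \eqref{eq_cond_density_beta124}, including the explicit $\boldsymbol\mu$-dependent normalization from \Cref{rmk:collecting_mu_terms_for_full_density}. Both also share the $\mathbf z$-marginal $f_{\mathbf a}$.

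The main obstacle is deducing equality of the $\mathbf y^{N-1}$-marginals from a shared conditional and a shared output marginal; this requires injectivity of the kernel $h$. Rather than prove injectivity, I will compute the $\mathbf y^{N-1}$-marginal of $F_{\mathbf a}$ explicitly. Integrating levels $1,\dots,N-2$ is immediate by part (i) and the normalization above. This leaves a density in $(\mathbf y^{N-1},\mathbf z)$. Comparing it with $f_{\mathbf a^{(N-1)}}(\mathbf y^{N-1})\,h(\mathbf z\mid\mathbf y^{N-1})$, written out via \Cref{rmk:collecting_mu_terms_for_full_density}, reduces the claim to the Bessel recursion
\[
B_{\mathbf a}(\mathbf z;\beta)=\int_{\boldsymbol\mu\prec\mathbf z}(\text{explicit kernel})\,B_{\mathbf a^{(N-1)}}(\boldsymbol\mu;\beta)\,d\boldsymbol\mu.
\]
This recursion is the one-level form of \eqref{eq:Bessel_corners_integral} together with the duality \eqref{eq:bessel_duality}, so the computation closes, and the argument in fact goes through for all $\beta$.
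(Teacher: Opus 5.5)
Your proposal is correct and follows the paper's proof. Part~(i) is the same factorization of \eqref{eq:perturbed_corners_density}, and in part~(ii) the paper likewise integrates out $\mathbf{y}^{1},\ldots,\mathbf{y}^{N-2}$ via \eqref{eq:Bessel_corners_integral} and then uses the explicit $\boldsymbol{\mu}$-dependent normalization of $h$ from \Cref{rmk:collecting_mu_terms_for_full_density}; it integrates $\mathbf{z}$ out instead of matching joint densities, which is the same computation. The one-level Bessel recursion you expect to need is superfluous, because $B_{\mathbf{a}}(\mathbf{z};\beta)$ cancels between $f_{\mathbf{a}}$ and $\Lambda_{\mathbf{a}}$: after the lower levels are integrated out, the joint density of $(\mathbf{y}^{N-1},\mathbf{z})$ is already a constant multiple of $f_{\mathbf{a}^{(N-1)}}(\mathbf{y}^{N-1})\ssp h(\mathbf{z}\mid\mathbf{y}^{N-1})$, with no $\mathbf{z}$-dependent factor left to identify (the recursion is true, being \eqref{eq:perturbed_Bessel_identity_proof_first_display} with the lower levels integrated out, so invoking it does no harm).
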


\begin{proof}
	(i) Fix $\mathbf{y}^{k+1},\ldots,\mathbf{y}^{N}$. In
	\eqref{eq:perturbed_corners_density}--\eqref{eq:perturbed_corners_conditional}
	the factors $f_{\mathbf{a}}(\mathbf{z})$, $B_{\mathbf{a}}(\mathbf{z};\beta)$,
	$\tilde{Z}_{N,\beta,\mathbf{z}}$ and $\exp(a_{N}\sum_{i}z_{i})$, together with
	the factors of \eqref{eq:unperturbed_links} indexed by $k'\ge k+1$, do not
	involve $\mathbf{y}^{1},\ldots,\mathbf{y}^{k}$, and the interlacing indicator
	splits as
	$\mathbf{1}_{\mathbf{y}^{1}\prec\cdots\prec\mathbf{y}^{k+1}}\ssp
	\mathbf{1}_{\mathbf{y}^{k+1}\prec\cdots\prec\mathbf{z}}$.
	What remains is
	\begin{equation*}
		\mathbf{1}_{\mathbf{y}^{1}\prec\cdots\prec\mathbf{y}^{k+1}}
		\prod_{k'=1}^{k}\Biggl(
			\prod_{1\le i<j\le k'}(y^{k'}_{i}-y_{j}^{k'})^{2-\beta}
			\prod_{p=1}^{k'}\prod_{q=1}^{k'+1}\bigl|y_{p}^{k'}-y_{q}^{k'+1}\bigr|^{\beta/2-1}
		\Biggr)
		\exp\Biggl[\ssp\sum_{k'=1}^{k}\sum_{i=1}^{k'}y^{k'}_{i}(a_{k'}-a_{k'+1})\Biggr],
	\end{equation*}
	which is $\Lambda_{\mathbf{a}^{(k+1)}}(\ \cdot\ ;\mathbf{y}^{k+1})$ times a
	factor depending on $\mathbf{y}^{k+1}$ alone. Both are probability densities
	in $(\mathbf{y}^{1},\ldots,\mathbf{y}^{k})$, so they coincide.

	(ii) Write $\boldsymbol{\mu}=\mathbf{y}^{N-1}$ and
	$\mathbf{Y}'=(\mathbf{y}^{1},\ldots,\mathbf{y}^{N-2})$. Splitting off the
	$k=N-1$ factor of \eqref{eq:unperturbed_links} and integrating out
	$\mathbf{Y}'$ by the corners representation
	\eqref{eq:Bessel_corners_integral} at size $N-1$ with
	$\mathbf{x}=\mathbf{a}^{(N-1)}$ gives
	\begin{equation*}
		\int \Lambda(\mathbf{Y}';\boldsymbol{\mu})
		\exp\Biggl[\ssp\sum_{k=1}^{N-2}\sum_{i=1}^{k}y^{k}_{i}(a_{k}-a_{k+1})\Biggr]
		\ssp d\mathbf{Y}'
		=\exp\Bigl(-a_{N-1}\sum_{i=1}^{N-1}\mu_{i}\Bigr)
		\ssp B_{\boldsymbol{\mu}}(\mathbf{a}^{(N-1)};\beta).
	\end{equation*}
	Combining the two Vandermonde powers by \eqref{eq:Dixon_Anderson}, the joint
	density of $(\boldsymbol{\mu},\mathbf{z})$ is a constant multiple of
	\begin{equation*}
		\prod_{1\le i<j\le N-1}(\mu_{i}-\mu_{j})\ssp
		\exp\Bigl(-a_{N}\sum_{i=1}^{N-1}\mu_{i}\Bigr)
		B_{\boldsymbol{\mu}}(\mathbf{a}^{(N-1)};\beta)\ssp
		\mathcal{K}(\boldsymbol{\mu},\mathbf{z}),
	\end{equation*}
	where $\mathcal{K}$ is the right-hand side of \eqref{eq_cond_density_beta124}
	with $\boldsymbol{\lambda}=\mathbf{z}$. Since the density of
	\Cref{rmk:collecting_mu_terms_for_full_density} integrates to~$1$,
	\begin{equation*}
		\int \mathcal{K}(\boldsymbol{\mu},\mathbf{z})\ssp d\mathbf{z}
		=\Gamma(\beta/2)^{N-1}\sqrt{2\pi}\ssp e^{a_{N}^{2}/2}
		\prod_{1\le i<j\le N-1}(\mu_{i}-\mu_{j})^{\beta-1}
		\exp\Bigl(a_{N}\sum_{i=1}^{N-1}\mu_{i}
		-\frac{1}{2}\sum_{i=1}^{N-1}\mu_{i}^{2}\Bigr).
	\end{equation*}
	The two exponentials in $a_{N}$ cancel and the Vandermonde powers add up
	to~$\beta$, so the marginal density of $\boldsymbol{\mu}$ is a constant
	multiple of
	$B_{\boldsymbol{\mu}}(\mathbf{a}^{(N-1)};\beta)
	\exp\bigl(-\frac{1}{2}\sum_{i}\mu_{i}^{2}\bigr)
	\prod_{i<j}(\mu_{i}-\mu_{j})^{\beta}$,
	which by the duality \eqref{eq:bessel_duality} is
	\eqref{eq:density_perturbed_GbetaE} at size $N-1$. Together with~(i) for
	$k=N-2$, this is the second assertion.
\end{proof}

Both constructions now obey the same recursion in~$N$. On the one hand,
\eqref{eq_transitionkernel} and \Cref{lem:corners_consistency} give
\begin{equation}
	\label{eq:corners_recursion}
	F_{\mathbf{a}}(\mathbf{Y},\mathbf{z})
	=F_{\mathbf{a}^{(N-1)}}\bigl(\mathbf{Y}',\mathbf{y}^{N-1}\bigr)\ssp
	h(\mathbf{z}\mid \mathbf{y}^{N-1}),
	\qquad
	\mathbf{Y}'=(\mathbf{y}^{1},\ldots,\mathbf{y}^{N-2}).
\end{equation}
On the other hand, the corner $C_{N-1}$ of $C=A+G$
\eqref{eq:perturbed_GOE_GUE_GSE} is itself a perturbed GOE/GUE/GSE matrix with
perturbation $\mathbf{a}^{(N-1)}$, so the eigenvalues of $C_{1},\ldots,C_{N-1}$
form the corresponding array one size down. Conditionally on the whole of
$C_{N-1}$, the top row $\boldsymbol{\lambda}$ has the density
\eqref{eq_cond_density_beta124}, since $\boldsymbol{\lambda}$ is determined by
$\boldsymbol{\mu}$ and $\boldsymbol{\xi}$ through \eqref{eq_polynomialeqn},
and $\boldsymbol{\xi}$ is independent of $C_{N-1}$ by the discussion
after \eqref{eq_Pi}. So the matrix model array satisfies
\eqref{eq:corners_recursion} as well. For $N=1$ both laws are
$\mathcal{N}(a_{1},1)$. By induction on~$N$, for $\beta=1,2,4$ the perturbed
$\beta$-corners process of \Cref{def:perturbed_beta_corners}, defined through
multivariate Bessel functions, agrees with the perturbed corners
processes constructed from the matrix models.

\subsection{Down transition density}
\label{sec:down_transition}

The marginal and conditional densities also give the
\emph{down transition density}
$\Lambda_\mathbf{a}(\boldsymbol{\mu}, \boldsymbol{\lambda})$,
which goes in the opposite direction from $h(\boldsymbol{\lambda} \mid \boldsymbol{\mu})$:

\begin{proposition}
	\label{prop:cond_mu_given_lambda}
	Let $N\ge2$.
	The conditional density of
	$\mathbf{y}^{N-1}=\boldsymbol{\mu}=(\mu_1 \ge \cdots \ge \mu_{N-1})$
	given $\mathbf{y}^N = \boldsymbol{\lambda} = (\lambda_1 \ge \cdots \ge \lambda_N)$
	under the perturbed $\beta$-corners process
	is proportional to
	\begin{equation}
		\label{eq:cond_mu_given_lambda}
		\Lambda_\mathbf{a}(\boldsymbol{\mu}, \boldsymbol{\lambda})
		\propto
		\prod_{1\le i<j\le N-1}(\mu_i - \mu_j)
		\prod_{i=1}^{N-1}\prod_{j=1}^{N}|\mu_i - \lambda_j|^{\beta/2-1}
		B_{\mathbf{a}'}(\boldsymbol{\mu}; \beta)
		\exp\Bigl(-a_N \sum_{i=1}^{N-1}\mu_i\Bigr)
		\mathbf{1}_{\boldsymbol{\mu} \prec \boldsymbol{\lambda}},
	\end{equation}
	where $\mathbf{a}' = (a_1, \ldots, a_{N-1})$
	and the proportionality constant depends on $\boldsymbol{\lambda}$ and $\mathbf{a}$.
\end{proposition}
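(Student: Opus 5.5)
We obtain the conditional density of $\boldsymbol{\mu}=\mathbf{y}^{N-1}$ given $\mathbf{y}^N=\boldsymbol{\lambda}$ by integrating the lower levels $\mathbf{Y}'=(\mathbf{y}^1,\ldots,\mathbf{y}^{N-2})$ out of the joint density \eqref{eq:perturbed_corners_density}, with the top row $\mathbf{z}=\boldsymbol{\lambda}$ held fixed, and then keeping only the $\boldsymbol{\mu}$-dependence. The needed integral is exactly the one evaluated in the proof of \Cref{lem:corners_consistency}(ii). That computation used only the corners representation \eqref{eq:Bessel_corners_integral} and duality \eqref{eq:bessel_duality}, not $\beta\in\{1,2,4\}$, so it is valid for all $\beta>0$.

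\textbf{Step 1.} In \eqref{eq:perturbed_corners_density_explicit}, the factors $f_{\mathbf{a}}(\boldsymbol{\lambda})$, $B_{\mathbf{a}}(\boldsymbol{\lambda};\beta)$, $\tilde Z_{N,\beta,\boldsymbol{\lambda}}$ and $\exp(a_N\sum_i\lambda_i)$ depend only on $\boldsymbol{\lambda}$ and $\mathbf{a}$, so we absorb them into the constant. Next we split the exponent $\sum_{k=1}^{N-1}\sum_i y_i^k(a_k-a_{k+1})$ into its $k=N-1$ term, $(a_{N-1}-a_N)\sum_i\mu_i$, and the terms with $k\le N-2$. We also split $\Lambda(\mathbf{Y};\boldsymbol{\lambda})$ as the $k=N-1$ factor
\[
\prod_{i<j}(\mu_i-\mu_j)^{2-\beta}\prod_{i,j}|\mu_i-\lambda_j|^{\beta/2-1}\mathbf{1}_{\boldsymbol{\mu}\prec\boldsymbol{\lambda}}
\]
times $\tilde Z_{N-1,\beta,\boldsymbol{\mu}}\,\Lambda(\mathbf{Y}';\boldsymbol{\mu})$. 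By \eqref{eq:Dixon_Anderson}, $\tilde Z_{N-1,\beta,\boldsymbol{\mu}}$ equals a constant times $\prod_{i<j}(\mu_i-\mu_j)^{\beta-1}$.

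\textbf{Step 2.} We integrate over $\mathbf{Y}'$ using the identity displayed in the proof of \Cref{lem:corners_consistency}(ii):
\[
\int\Lambda(\mathbf{Y}';\boldsymbol{\mu})\exp\Bigl[\sum_{k\le N-2}\sum_i y_i^k(a_k-a_{k+1})\Bigr]d\mathbf{Y}'=e^{-a_{N-1}\sum_i\mu_i}B_{\mathbf{a}'}(\boldsymbol{\mu};\beta).
\]
Here duality \eqref{eq:bessel_duality} lets us use the representation \eqref{eq:Bessel_corners_integral} even when $\mathbf{a}'$ has ties, since $\boldsymbol{\mu}$ has distinct coordinates almost everywhere.

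\textbf{Step 3.} We collect the terms. The Vandermonde powers $2-\beta$ and $\beta-1$ combine to the first power. The exponentials give $e^{(a_{N-1}-a_N)\sum\mu_i}\cdot e^{-a_{N-1}\sum\mu_i}=e^{-a_N\sum\mu_i}$. Together with the mixed product, the Bessel factor and the indicator, this yields \eqref{eq:cond_mu_given_lambda}.

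There is no real obstacle in this argument. The only point needing care is the bookkeeping of which factors depend on $\boldsymbol{\mu}$ versus only on $\boldsymbol{\lambda}$, and confirming that the $\mathbf{Y}'$-integral is finite and positive, which follows from the positivity of $B$.
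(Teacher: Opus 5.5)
Your proposal is correct and follows the paper's proof step for step. Both peel off the $k=N-1$ factor of the links \eqref{eq:unperturbed_links} and the $k=N-1$ term of the exponential. Both then integrate out $\mathbf{y}^1,\ldots,\mathbf{y}^{N-2}$ by the corners representation \eqref{eq:Bessel_corners_integral} in dimension $N-1$, which is valid for all $\beta>0$, and combine $(\mu_i-\mu_j)^{\beta-1}(\mu_i-\mu_j)^{2-\beta}$ and $e^{(a_{N-1}-a_N)\sum_i\mu_i}e^{-a_{N-1}\sum_i\mu_i}$.

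One small wording fix in Step~2: the representation needs distinct coordinates in its label $\boldsymbol{\mu}$, and duality is what turns the resulting $B_{\boldsymbol{\mu}}(\mathbf{a}';\beta)$ into $B_{\mathbf{a}'}(\boldsymbol{\mu};\beta)$.
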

\begin{proof}
	The conditional density of $\boldsymbol{\mu}=\mathbf{y}^{N-1}$ given
	$\boldsymbol{\lambda}=\mathbf{y}^{N}=\mathbf{z}$ is proportional, as a function of
	$\boldsymbol{\mu}$, to the joint density of $(\mathbf{y}^{N-1},\mathbf{y}^{N})$
	obtained from \eqref{eq:perturbed_corners_density} by integrating out the lower
	levels $\mathbf{y}^{1},\ldots,\mathbf{y}^{N-2}$.
	Peeling off the $k=N-1$ factor from the links \eqref{eq:unperturbed_links} and
	writing $\Lambda'(\mathbf{Y}';\boldsymbol{\mu})$ for the $(N-1)$-level unperturbed
	links with top row $\boldsymbol{\mu}$, where $\mathbf{Y}'=(\mathbf{y}^{1},\ldots,\mathbf{y}^{N-2})$,
	we have
	\begin{equation*}
		\Lambda(\mathbf{Y};\mathbf{z})
		=\frac{\tilde{Z}_{N-1,\beta,\boldsymbol{\mu}}}{\tilde{Z}_{N,\beta,\mathbf{z}}}\,
		\Lambda'(\mathbf{Y}';\boldsymbol{\mu})
		\prod_{1\le i<j\le N-1}(\mu_i-\mu_j)^{2-\beta}
		\prod_{p=1}^{N-1}\prod_{q=1}^{N}|\mu_p-\lambda_q|^{\beta/2-1}
		\mathbf{1}_{\boldsymbol{\mu}\prec\boldsymbol{\lambda}}.
	\end{equation*}
	The exponential in \eqref{eq:perturbed_corners_conditional}
	splits as $\sum_{k=1}^{N-2}\sum_{i=1}^{k} y^{k}_{i}(a_k-a_{k+1})
	+(a_{N-1}-a_N)\sum_{i=1}^{N-1}\mu_i$.
	Integrating the lower levels against $\Lambda'(\mathbf{Y}';\boldsymbol{\mu})$ by
	\eqref{eq:Bessel_corners_integral} in dimension
	$N-1$ (exactly as in the proof of \Cref{prop:perturbed_Bessel_identity}) gives
	\begin{equation*}
		\int \Lambda'(\mathbf{Y}';\boldsymbol{\mu})
		\exp\Bigl[\sum_{k=1}^{N-2}\sum_{i=1}^{k} y^{k}_{i}(a_k-a_{k+1})\Bigr]
		\,d\mathbf{Y}'
		=B_{\mathbf{a}'}(\boldsymbol{\mu};\beta)\exp\Bigl(-a_{N-1}\sum_{i=1}^{N-1}\mu_i\Bigr).
	\end{equation*}
	Here $f_\mathbf{a}(\mathbf{z})$, $B_\mathbf{a}(\mathbf{z};\beta)$,
	$\tilde{Z}_{N,\beta,\mathbf{z}}$ and $\exp(a_N\sum_i z_i)$ are constants in
	$\boldsymbol{\mu}$.  Using $$\tilde{Z}_{N-1,\beta,\boldsymbol{\mu}}\propto
	\prod_{1\le i<j\le N-1}(\mu_i-\mu_j)^{\beta-1}$$ from \eqref{eq:Dixon_Anderson},
	the two Vandermonde powers combine as
	$(\mu_i-\mu_j)^{\beta-1}(\mu_i-\mu_j)^{2-\beta}=(\mu_i-\mu_j)$, and the two
	exponentials as $(a_{N-1}-a_N)\sum_i\mu_i-a_{N-1}\sum_i\mu_i=-a_N\sum_i\mu_i$.
	This completes the proof of \eqref{eq:cond_mu_given_lambda}.
\end{proof}

\subsection{Random polynomial equation}
\label{sec:random_poly_general}

The polynomial equation~\eqref{eq_polynomialeqn} derived in \Cref{sec:beta124}
for $\beta=1,2,4$ extends to general $\beta>0$.
This provides an alternative characterization of the perturbed $\beta$-corners process,
which we use in the zero-temperature analysis below.

\begin{proposition}[Random polynomial equation for general $\beta$]
\label{prop:random_poly_general}
Let $\boldsymbol{\lambda}=(\lambda_1\ge\cdots\ge\lambda_N)$ and
$\boldsymbol{\mu}=(\mu_1\ge\cdots\ge\mu_{N-1})$ be consecutive levels
of the perturbed $\beta$-corners process with perturbation $\mathbf{a}=(a_1,\ldots,a_N)$.
Conditionally on $\boldsymbol{\mu}$, the eigenvalues $\boldsymbol{\lambda}$
are the $N$ roots of the random equation
\begin{equation}
	\label{eq:random_poly_general}
	\sum_{i=1}^{N-1}\frac{\xi_i}{x-\mu_i} = x - \xi_N,
\end{equation}
where the random variables $\xi_1,\ldots,\xi_{N-1},\xi_N$ are conditionally independent given $\boldsymbol{\mu}$, with
\begin{equation}
	\label{eq:xi_distributions}
	\xi_i \sim \frac{1}{2}\chi^2_\beta , \quad i=1,\ldots,N-1,
	\qquad\text{and}\qquad
	\xi_N \sim \mathcal{N}(a_N,1).
\end{equation}
Equivalently, the eigenvalues satisfy the polynomial identity
\begin{equation}
	\label{eq:poly_identity_general}
	\prod_{i=1}^{N-1}(x-\mu_i)(x-\xi_N) - \prod_{j=1}^{N}(x-\lambda_j)
	= \sum_{i=1}^{N-1}\xi_i\prod_{k\ne i}(x-\mu_k).
\end{equation}
\end{proposition}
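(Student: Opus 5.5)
Conditionally on $\boldsymbol{\mu}$, we transport the law of $\boldsymbol{\xi}$ to a law of $\boldsymbol{\lambda}$ and compare it with the up transition density $h(\mathbf{z}\mid\mathbf{Y})$ in \eqref{eq_transitionkernel}. That density was derived directly from \Cref{def:perturbed_beta_corners} for arbitrary $\beta>0$. It depends on $\mathbf{Y}$ only through $\mathbf{y}^{N-1}=\boldsymbol{\mu}$, and it has exactly the form \eqref{eq_cond_density_beta124}. It therefore suffices to show that if $\boldsymbol{\xi}$ has the law \eqref{eq:xi_distributions}, then the roots $\boldsymbol{\lambda}$ of \eqref{eq:random_poly_general} have conditional density proportional to the right-hand side of \eqref{eq_cond_density_beta124}, with $\beta$ now a general positive real. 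This works at any level: by \Cref{lem:corners_consistency}(i), the pair $(\mathbf{y}^{k},\mathbf{y}^{k+1})$ together with its lower levels is a size-$(k+1)$ perturbed corners process with perturbation $\mathbf{a}^{(k+1)}$. Hence the same argument applies with $N$ replaced by $k+1$, using the analogue of \eqref{eq_transitionkernel} for $(\mathbf{y}^1,\ldots,\mathbf{y}^{k+1})$.

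The key observation is that the computation in \Cref{sec:beta124} never used $\beta\in\{1,2,4\}$ after \eqref{eq_polynomialeqn} was obtained. First, for fixed $\boldsymbol{\mu}$ with distinct coordinates, the map $(\xi_1,\ldots,\xi_N)\in(0,\infty)^{N-1}\times\R\mapsto\boldsymbol{\lambda}$ is a bijection onto the set of $\boldsymbol{\lambda}$ strictly interlacing $\boldsymbol{\mu}$. Positivity of the $\xi_i$ forces interlacing by the sign changes of the left-hand side of \eqref{eq_polynomialeqn} at its poles. The inverse is given explicitly by \eqref{eq_xi1} and \eqref{eq_xi2}, and \eqref{eq_xi1} shows that every strictly interlacing $\boldsymbol{\lambda}$ yields $\xi_i>0$. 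Second, the Jacobian computed in \Cref{sec:Jacobian} via the Cauchy determinant is purely algebraic, so it is proportional to $\prod_{i<j}(\lambda_i-\lambda_j)$ up to $\boldsymbol{\mu}$-dependent factors. Third, the density of $\boldsymbol{\xi}$ is $\prod_{i<N}\xi_i^{\beta/2-1}e^{-\xi_i}\cdot e^{-(\xi_N-a_N)^2/2}$, which is valid for every $\beta>0$ since $\tfrac12\chi^2_\beta$ is the Gamma$(\beta/2,1)$ law. We then substitute \eqref{eq_xi1}, \eqref{eq_xi2} and \eqref{eq_xi3} verbatim, exactly as in the proof of \Cref{prop:cond_density_beta124}. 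This produces $\prod_{i,j}|\mu_i-\lambda_j|^{\beta/2-1}$, $e^{a_N\sum\lambda_i}$ and $e^{-\frac12\sum\lambda_i^2}$, all up to $\boldsymbol{\mu}$-dependent constants. Both candidate conditional densities are probability densities in $\boldsymbol{\lambda}$ and are proportional, so they coincide.

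Finally, \eqref{eq:poly_identity_general} is equivalent to \eqref{eq:random_poly_general}. Multiplying \eqref{eq:random_poly_general} by $\prod_i(x-\mu_i)$ shows that the degree-$N$ monic polynomial on the left of \eqref{eq:poly_identity_general} minus the right-hand side vanishes exactly at the $\lambda_j$. Since both sides have the same leading behaviour, this gives the identity, as in \eqref{eq_polynomialeqn3}.

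The main obstacle is the bookkeeping needed to confirm that every $\boldsymbol{\mu}$-dependent factor, including the $\xi_N$ Gaussian cross term, is genuinely independent of $\boldsymbol{\lambda}$. This was already done for classical $\beta$ in \Cref{rmk:collecting_mu_terms_for_full_density}, and nothing there depends on $\beta$ being an integer. A further point is that distinctness of $\boldsymbol{\mu}$, needed for the bijection, holds almost surely because the density of the corners process has no atoms.
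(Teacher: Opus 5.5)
Your proof is correct and follows the paper's own route: postulate the law \eqref{eq:xi_distributions}, push it forward through the bijection $\boldsymbol\xi\leftrightarrow\boldsymbol\lambda$, reuse the $\beta$-free Jacobian and the identities \eqref{eq_xi1}--\eqref{eq_xi3}, and match the result with the top-level kernel \eqref{eq_transitionkernel}, which is read off from \Cref{def:perturbed_beta_corners} for every $\beta>0$. The only slip is in your side remark about lower levels, which the statement (levels $N$ and $N-1$, with $\xi_N\sim\mathcal{N}(a_N,1)$) does not need: \Cref{lem:corners_consistency}(i) gives the law of the levels below $\mathbf{y}^{k+1}$ given $\mathbf{y}^{k+1}$, but not that $\mathbf{y}^{k+1}$ has marginal $f_{\mathbf{a}^{(k+1)}}$, and that marginal is what the upward transition at level $k+1$ requires (the paper proves it only for $\beta=1,2,4$, in part~(ii)).
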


\begin{proof}
The computation repeats the one in \Cref{sec:beta124} word for word.
There, the classical value of $\beta$ is used only to identify the law of
$\xi_i=\|P_i\|^{2}$ as $\tfrac{1}{2}\chi^{2}_{\beta}$;
the polynomial equation~\eqref{eq_polynomialeqn}, the Jacobian
of \Cref{sec:Jacobian}, and the assembly of the conditional density in the
proof of \Cref{prop:cond_density_beta124} are algebraic identities carrying
$\beta$ only through the exponent $\beta/2-1$.
For general $\beta>0$ the distributions~\eqref{eq:xi_distributions} are
postulated rather than read off a matrix model, and the same steps turn the
density of $\boldsymbol{\xi}$ into the transition
kernel~\eqref{eq_transitionkernel}.
\end{proof}

Let us make two concluding remarks about the
perturbed $\beta$-corners process before turning to 
asymptotics.

\begin{remark}
	Assume $N\ge2$.
	The interlacing $\boldsymbol{\mu}\prec\boldsymbol{\lambda}$ follows
	from~\eqref{eq:random_poly_general}: the left-hand side
	has poles at each $\mu_i$ with positive residues $\xi_i>0$,
	while the right-hand side is linear in $x$.
	By analyzing signs, exactly one root $\lambda_j$ lies in each interval
	$(\mu_{i+1},\mu_i)$, $i=1,\dots,N-2$, plus one root above $\mu_1$ and one
	below $\mu_{N-1}$.
	(For $N=1$ equation~\eqref{eq:random_poly_general} reduces to
	$\lambda_1=\xi_N$, with no lower row.)
\end{remark}

\begin{remark}[Relation to finite free probability]
	\label{rmk:finite_free_prob}
	The polynomial identity~\eqref{eq:poly_identity_general} connects to
	finite free probability~\cite{MarcusSpielmanSrivastava2022},~\cite{Marcus2021},~\cite{GorinMarcus2020}.
	Comparing the coefficients of $x^{N-1}$ and of $x^{N-2}$
	in~\eqref{eq:poly_identity_general} returns the
	identities~\eqref{eq_xi2} and~\eqref{eq_xi3}, which thus hold for all $\beta>0$.
	These moment relations encode how the spectrum of a sum of matrices
	relates to the spectra of the summands.
\end{remark}

\section{Zero-temperature asymptotics: fixed perturbation}
\label{sec:beta_inf_trivial}

In this section and the next one we analyze the $\beta\to\infty$ asymptotics
of the perturbed $\beta$-corners process.
Here, the perturbation parameters $\mathbf{a}=(a_1,\ldots,a_N)$ remain fixed
as $\beta$ grows; a perturbation growing with $\beta$
is considered in \Cref{sec:beta_inf_scaled}.

\subsection{The $\infty$-corners lattice}

The $\beta\to\infty$ behavior of the \emph{unperturbed} $\beta$-corners
process~\eqref{eq:unperturbed_links} was determined in~\cite{GorinMarcus2020}: 
the array freezes onto a deterministic lattice
of polynomial-derivative roots, with Gaussian fluctuations of order
$\beta^{-1/2}$ on top of it. In this subsection, 
we recall the results from \cite{GorinMarcus2020}
in the form useful to us.

\begin{definition}
\label{def:deterministic_lattice}
Given a top row $\mathbf{z}=(z_1>\cdots>z_N)$, the \emph{$\infty$-corners lattice}
$\mathsf{Y}\coloneqq\{\mathsf{y}_i^k\}_{1\le i\le k\le N}$ is defined as follows:
$\mathsf{y}_i^N = z_i$ for $i=1,\ldots,N$, and for $k=N-1,\ldots,1$,
the points $\mathsf{y}_1^k > \cdots > \mathsf{y}_k^k$ are the $k$ roots of the polynomial
\begin{equation*}
Q^{N\to k}(u) = \frac{1}{N(N-1)\cdots(k+1)}
	\left(\frac{d}{du}\right)^{N-k}\prod_{j=1}^N(u-z_j).
\end{equation*}
Equivalently~\cite{GorinMarcus2020}*{Section~3.4}, the lattice positions
satisfy the \emph{optimality equations}
\begin{equation}
	\label{eq:optimality_equations}
	\sum_{j=1}^{k+1}\frac{1}{\mathsf{y}_i^k - \mathsf{y}_j^{k+1}} = 0,
	\qquad i=1,\ldots,k,
\end{equation}
which characterize the critical point of the functional
$\prod_{p=1}^{k}\prod_{q=1}^{k+1}|u_p - \mathsf{y}_q^{k+1}|$
in the free variable $\mathbf{u}=(u_1>\cdots>u_k)$, the row above being pinned
to the lattice. Here $\mathbf{u}$ ranges over the interlacing chamber
$\mathbf{u}\prec\mathsf{y}^{k+1}$, which is bounded and on whose boundary the
functional vanishes, so the critical point is an interior maximum. It is unique:
the logarithm of the functional splits as
$\sum_{p}\sum_{q}\log|u_p-\mathsf{y}_q^{k+1}|$, strictly concave in each $u_p$.
\end{definition}

\begin{definition}[Discrete Gaussian Free Field, \cite{GorinMarcus2020}*{Definition~1.5}]
\label{def:dGFF}
The \emph{discrete Gaussian Free Field (dGFF)} on the $\infty$-corners lattice
$\{\mathsf{y}_i^k\}$ is a centered Gaussian vector $\{\zeta_i^k\}_{1\le i\le k\le N}$
with $\zeta_i^N=0$ for all $i=1,\ldots,N$ (the top row is pinned), and with the
remaining variables having joint density proportional to
\begin{equation}
	\label{eq:dGFF_density}
	\exp\left(\sum_{k=1}^{N-1}\left[
		\sum_{1\le i<j\le k}\frac{(\zeta_i^k-\zeta_j^k)^2}{2(\mathsf{y}_i^k-\mathsf{y}_j^k)^2}
		- \sum_{p=1}^k\sum_{q=1}^{k+1}\frac{(\zeta_p^k-\zeta_q^{k+1})^2}{4(\mathsf{y}_p^k-\mathsf{y}_q^{k+1})^2}
	\right]\right).
\end{equation}
\end{definition}

\begin{remark}
	\label{rmk:dGFF_proper}
	The same-level terms in~\eqref{eq:dGFF_density} carry a positive sign, so
	integrability is not automatic. It is proved
	in~\cite{GorinMarcus2020}*{Section~3.4} by integrating the levels one at a
	time from the bottom up, each step being an instance of the Gaussian
	identity~\cite{GorinMarcus2020}*{(64)}.
	In particular, this computation implies that the quadratic form in the exponent
	of~\eqref{eq:dGFF_density} is \emph{negative definite}, as a nonnegative eigenvalue
	of the form
	would make the integral diverge.
\end{remark}

\begin{theorem}[\cite{GorinMarcus2020}*{Theorem~1.6}]
\label{thm:GM_crystallization}
Let $\{x_i^k(\beta)\}_{1\le i\le k\le N}$ be the unperturbed $\beta$-corners
process~\eqref{eq:unperturbed_links} with fixed top row $\mathbf{z}=(z_1>\cdots>z_N)$,
let $\{\mathsf{y}_i^k\}$ be the $\infty$-corners lattice with the same top row,
and let $\{\zeta_i^k\}$ be the dGFF on it. Then, as $\beta\to\infty$, the
following hold jointly in all $1\le i\le k\le N$:
\begin{enumerate}[\normalfont(i)]
	\item $x_i^k(\beta)\to \mathsf{y}_i^k$ in probability;
	\item the array $\bigl\{\sqrt{\beta}\bigl(x_i^k(\beta)-\mathsf{y}_i^k\bigr)\bigr\}$
	converges in distribution to $\{\zeta_i^k\}$.
\end{enumerate}
\end{theorem}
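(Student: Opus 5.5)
The plan is to avoid a direct Laplace analysis of the full density \eqref{eq:unperturbed_links}. That density is not log-concave: the same-level factors $(y^k_i-y^k_j)^{2-\beta}$ enter with the wrong sign. Instead I would first describe the process as a top-down Markov chain whose steps are explicit and simple. Only then would I use the density to identify the Gaussian limit.

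\emph{Step 1 (down transitions).} Take the corners process with fixed top row $\mathbf{z}$, peel off the level-$k$ factor of \eqref{eq:unperturbed_links}, and integrate out the levels below $k$. The Dixon--Anderson formula \eqref{eq:Dixon_Anderson} produces $\prod_{i<j}(y^k_i-y^k_j)^{\beta-1}$. Hence the conditional density of $\mathbf{y}^k$ given $\mathbf{y}^{k+1},\ldots,\mathbf{y}^N$ is proportional to
\begin{equation*}
\prod_{i<j}(y^k_i-y^k_j)\prod_{p,q}|y^k_p-y^{k+1}_q|^{\beta/2-1}\mathbf{1}_{\mathbf{y}^k\prec\mathbf{y}^{k+1}}.
\end{equation*}
This is the case $\mathbf{a}=0$ of \Cref{prop:cond_mu_given_lambda}. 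By the Dixon--Anderson change of variables (the same Jacobian computation as in \Cref{sec:Jacobian}), this is equivalent to the following description: $\mathbf{y}^k$ is the set of $k$ roots of
\begin{equation*}
\sum_{q=1}^{k+1}\frac{w_q}{x-y^{k+1}_q}=0,
\end{equation*}
where $(w_1,\ldots,w_{k+1})\sim\mathrm{Dirichlet}(\beta/2,\ldots,\beta/2)$ is independent of the higher levels. So the array is obtained from $\mathbf{z}$ by $N-1$ independent Dirichlet-weighted steps.

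\emph{Step 2 (LLN).} As $\beta\to\infty$, we have $w_q\to 1/(k+1)$ in probability. Moreover, $\sqrt{\beta}\,(w_q-\tfrac{1}{k+1})_q$ converges to a centered Gaussian vector with explicit covariance $\tfrac{2}{k+1}\bigl(\delta_{qq'}-\tfrac{1}{k+1}\bigr)\tfrac1{k+1}$, constrained to have zero sum. With equal weights, the equation reads $\tfrac{1}{k+1}P'(x)/P(x)=0$ for $P(x)=\prod_q(x-y^{k+1}_q)$, so its roots are the roots of $P'$. The roots depend continuously (indeed smoothly, since they are simple and interlace strictly) on the weights and on $\mathbf{y}^{k+1}$. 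Induction downward from $\mathbf{z}$ then gives $x^k_i(\beta)\to\mathsf{y}^k_i$, which is part (i).

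\emph{Step 3 (CLT).} Apply the delta method level by level. The fluctuations of $\mathbf{y}^k$ are a linear function, with coefficients determined by the lattice, of two inputs: the fluctuations of $\mathbf{y}^{k+1}$ and the independent Gaussian limit of the weights. Joint convergence of $\sqrt\beta(x^k_i-\mathsf{y}^k_i)$ to a centered Gaussian vector follows. It remains to identify this vector with the dGFF of \Cref{def:dGFF}, and I expect this identification to be the main obstacle. Matching covariances through the linear recursion directly looks messy, so I would instead expand the log-density around the lattice. Write $y=\mathsf{y}+\zeta/\sqrt\beta$. Then $\log\Lambda=\tfrac{\beta}{2}\Phi(\mathbf{Y})+O(1)$, where
\begin{equation*}
\Phi=\sum_{k}\Bigl(\sum_{p,q}\log|y^k_p-y^{k+1}_q|-2\sum_{i<j}\log|y^k_i-y^k_j|\Bigr).
\end{equation*}
The first-order term vanishes at the lattice by two facts: \eqref{eq:optimality_equations}, and the identity $\sum_q (\mathsf{y}^k_i-\mathsf{y}^{k-1}_q)^{-1}=P_k''(\mathsf{y}^k_i)/P_k'(\mathsf{y}^k_i)=2\sum_{j\ne i}(\mathsf{y}^k_i-\mathsf{y}^k_j)^{-1}$, where $P_k=\prod_j(x-\mathsf{y}^k_j)$ and the levels $\mathsf{y}^{k-1}$ are the roots of $P_k'$. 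The second-order term is exactly the exponent in \eqref{eq:dGFF_density}: cross terms carry $-\tfrac{\beta}{2}\cdot\tfrac12\cdot\tfrac1\beta$, giving $-(\zeta^k_p-\zeta^{k+1}_q)^2/(4(\cdot)^2)$, and same-level terms similarly give $+(\zeta^k_i-\zeta^k_j)^2/(2(\cdot)^2)$. The $O(1)$ prefactor converges to a constant. Hence the rescaled densities converge pointwise on compacts to the dGFF density, up to normalization. Step 3 already supplies tightness and a Gaussian limit. Pointwise convergence together with tightness identifies the limit density as proportional to \eqref{eq:dGFF_density}; \Cref{rmk:dGFF_proper} guarantees this is a proper Gaussian. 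This gives part (ii).
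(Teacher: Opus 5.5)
Your argument is correct. One point in the last step deserves to be made explicit. The Taylor expansion gives convergence of the unnormalized rescaled densities uniformly on compact sets. Comparing on a single ball then shows that the unknown normalizing constants converge, so every subsequential limit has density proportional to the exponential in \eqref{eq:dGFF_density}. Only tightness from Step~3 is really used, and integrability of the dGFF density comes out as a byproduct.

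The paper does not prove this statement; it is quoted from \cite{GorinMarcus2020}. However, \Cref{sec:beta_inf_scaled} reproves it as the case $\mathsf{a}_1=\cdots=\mathsf{a}_N$ of \Cref{thm:crystallization_scaled} and \Cref{thm:CLT_scaled}, and it reaches concentration by a different route. There, the fixed-top-row density is written as $R\ssp e^{-\beta H_{\mathsf{a}}}$. The effective Hamiltonian is shown to be strictly convex on the whole Gelfand--Tsetlin polytope (\Cref{lem:alternating} and \Cref{prop:strict_convexity}) and to blow up at its boundary (\Cref{lem:boundary_blowup}). This gives the unique minimizer and the Gaussian tail bound of \Cref{lem:gaussian_tail_scaled}, and the limit is then identified by the same local expansion as yours.

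You instead obtain the law of large numbers and tightness from the explicit top-down Dixon--Anderson chain with independent Dirichlet weights, together with the delta method. Your route is more elementary and closer to the level-by-level spirit of \cite{GorinMarcus2020}. It even represents the limiting field as a linear recursion driven by independent Gaussian weight fluctuations. Its cost is that each level must be an explicit smooth function of the level above and of independent weights, with a closed-form limit (the roots of $P'$). That structure breaks down under a perturbation. The down transition acquires the non-explicit factor $B_{\mathbf{a}'}(\boldsymbol{\mu};\beta)$ of \Cref{prop:cond_mu_given_lambda}, and the deformed lattice solves coupled equations with no level-by-level construction. This is why the paper needs the global convexity argument.

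One claim in your motivation is false: the density \eqref{eq:unperturbed_links} \emph{is} log-concave for $\beta>2$. Write $H_{\mathbf{0}}$ for the effective Hamiltonian \eqref{eq:effective_hamiltonian} at $\mathsf{a}=\mathbf{0}$. On the polytope, $\Lambda(\mathbf{Y};\mathbf{z})=\tilde Z_{N,\beta,\mathbf{z}}^{-1}\exp\bigl((2-\beta)H_{\mathbf{0}}(\mathbf{Y})\bigr)$. \Cref{prop:strict_convexity} shows that $H_{\mathbf{0}}$ is strictly convex, because the cross-level terms outweigh the wrong-sign same-level ones. This does not affect your proof, but it is exactly the fact that makes the paper's direct Laplace analysis work.
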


Finally, we record identities tying consecutive levels of the
$\infty$-corners 
lattice together. They are repeatedly used in 
\Cref{sec:beta_inf_scaled}
below.

\begin{lemma}
\label{lem:polynomial_identities}
For $2\le k\le N$ and $1\le i\le k$,
\begin{align}
	\sum_{j=1}^k\prod_{m\ne j}(u-\mathsf{y}_m^k)
		&= k\prod_{q=1}^{k-1}(u-\mathsf{y}_q^{k-1}), \label{eq:polynomial_identity1}\\
	\prod_{j\ne i}(\mathsf{y}_i^k - \mathsf{y}_j^k)
		&= k\prod_{q=1}^{k-1}(\mathsf{y}_i^k - \mathsf{y}_q^{k-1}), \label{eq:polynomial_identity2}
\end{align}
and consequently,
\begin{equation}
	\label{eq:polynomial_identity3}
	\sum_{j\ne i}\frac{1}{\mathsf{y}_i^k - \mathsf{y}_j^k}
	= \frac{1}{2}\sum_{q=1}^{k-1}\frac{1}{\mathsf{y}_i^k - \mathsf{y}_q^{k-1}}.
\end{equation}
\end{lemma}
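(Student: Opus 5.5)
Write $P_k(u)\coloneqq\prod_{j=1}^k(u-\mathsf{y}^k_j)$ for each level. The plan is to read all three identities off the recursive definition of the lattice. By \Cref{def:deterministic_lattice}, the row $\mathsf{y}^{k-1}$ consists of the roots of $Q^{N\to k-1}$, and
\begin{equation*}
Q^{N\to k-1}=\frac{1}{k}\,\frac{d}{du}Q^{N\to k}.
\end{equation*}
Since $Q^{N\to k}$ is monic of degree $k$ with roots $\mathsf{y}^k_1,\dots,\mathsf{y}^k_k$, it equals $P_k$. The derivative $P_k'$ has leading coefficient $k$ and roots $\mathsf{y}^{k-1}_1,\dots,\mathsf{y}^{k-1}_{k-1}$. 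Hence
\begin{equation*}
P_k'(u)=k\,P_{k-1}(u).
\end{equation*}
Before using this I will confirm that the roots are simple, which is needed for the index bookkeeping. Distinctness propagates downward by Rolle's theorem and strict interlacing, starting from $z_1>\cdots>z_N$.

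Identity \eqref{eq:polynomial_identity1} is exactly $P_k'=kP_{k-1}$, after expanding the derivative of the product by the Leibniz rule:
\begin{equation*}
P_k'(u)=\sum_{j=1}^k\prod_{m\ne j}(u-\mathsf{y}^k_m).
\end{equation*}
For \eqref{eq:polynomial_identity2}, I set $u=\mathsf{y}^k_i$ in \eqref{eq:polynomial_identity1}. Every summand with $j\ne i$ contains the factor $(u-\mathsf{y}^k_i)$ and so vanishes. Only the $j=i$ term $\prod_{j\ne i}(\mathsf{y}^k_i-\mathsf{y}^k_j)$ survives, and it equals $kP_{k-1}(\mathsf{y}^k_i)$.

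For \eqref{eq:polynomial_identity3}, the idea is to differentiate once more and evaluate at a root. Write $P_k(u)=(u-\mathsf{y}^k_i)R_i(u)$ with $R_i(u)=\prod_{j\ne i}(u-\mathsf{y}^k_j)$. Then
\begin{equation*}
P_k''(\mathsf{y}^k_i)=2R_i'(\mathsf{y}^k_i)=2R_i(\mathsf{y}^k_i)\sum_{j\ne i}\frac{1}{\mathsf{y}^k_i-\mathsf{y}^k_j}.
\end{equation*}
Differentiating $P_k'=kP_{k-1}$ gives $P_k''=kP_{k-1}'$, so at the root
\begin{equation*}
P_k''(\mathsf{y}^k_i)=kP_{k-1}(\mathsf{y}^k_i)\sum_{q=1}^{k-1}\frac{1}{\mathsf{y}^k_i-\mathsf{y}^{k-1}_q}.
\end{equation*}
By \eqref{eq:polynomial_identity2}, $R_i(\mathsf{y}^k_i)=kP_{k-1}(\mathsf{y}^k_i)$. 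This common factor is nonzero by strict interlacing: $\mathsf{y}^k_i$ is not a root of $P_{k-1}$, since the roots of $P_k$ are simple. Cancelling it from the two expressions for $P_k''(\mathsf{y}^k_i)$ leaves the factor $\tfrac12$ as claimed.

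There is no real obstacle here. The only point requiring care is this nonvanishing and the strictness of interlacing, which is guaranteed by the strict top row and Rolle's theorem.
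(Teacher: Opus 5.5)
Your proof is correct and follows the paper's argument: \eqref{eq:polynomial_identity1} is $P_k'=kP_{k-1}$ expanded by the Leibniz rule, \eqref{eq:polynomial_identity2} is its evaluation at $u=\mathsf{y}_i^k$, and \eqref{eq:polynomial_identity3} comes from differentiating once more, evaluating at $\mathsf{y}_i^k$, and dividing by \eqref{eq:polynomial_identity2}. Your check that $P_{k-1}(\mathsf{y}_i^k)\ne0$, because the roots of $P_k$ are simple, justifies the division that the paper performs without comment.
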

\begin{proof}
By \Cref{def:deterministic_lattice}, the points $\mathsf{y}_q^{k-1}$ are the roots
of the derivative of $\prod_{j=1}^k(u-\mathsf{y}_j^k)$, which
is~\eqref{eq:polynomial_identity1}; setting $u=\mathsf{y}_i^k$ there
gives~\eqref{eq:polynomial_identity2}.
Differentiating~\eqref{eq:polynomial_identity1} in $u$ and then setting
$u=\mathsf{y}_i^k$ gives
\begin{equation*}
	2\sum_{j\ne i}\prod_{m\ne i,j}(\mathsf{y}_i^k - \mathsf{y}_m^k)
	= k\sum_{q=1}^{k-1}\prod_{r\ne q}(\mathsf{y}_i^k - \mathsf{y}_r^{k-1}),
\end{equation*}
and dividing this by~\eqref{eq:polynomial_identity2}
yields~\eqref{eq:polynomial_identity3}.
\end{proof}

\subsection{Crystallization for a fixed perturbation}

We now show that a perturbation which does not grow with
$\beta$ leaves the zero-temperature limit unchanged.
This needs no new asymptotic analysis. With the top row
fixed, the perturbation enters the law of the array only
through the factor
\begin{equation*}
	\exp\Bigl[\ssp\sum_{k=1}^{N-1}(a_{k}-a_{k+1})\sum_{i=1}^{k}y^{k}_{i}\Bigr]
\end{equation*}
of~\eqref{eq:perturbed_corners_conditional}, which does not involve $\beta$; the
remaining perturbation-dependent factors there do not involve $\mathbf{Y}$ and
cancel upon normalization.

\begin{theorem}[Crystallization for fixed perturbation]
\label{thm:crystallization_fixed}
Let $\{y_i^k(\beta)\}_{1\le i\le k\le N}$ be the perturbed $\beta$-corners process
with fixed perturbation $\mathbf{a}=(a_1,\ldots,a_N)$ and fixed top row
$\mathbf{z}=(z_1>\cdots>z_N)$. Let $\{\mathsf{y}_i^k\}$ be the deterministic
$\infty$-corners lattice, and let $\{\zeta_i^k\}$ be the dGFF on this lattice.
Then as $\beta\to\infty$:
\begin{enumerate}[\normalfont(i)]
	\item $y_i^k(\beta)\to \mathsf{y}_i^k$ in probability;
	\item the array $\bigl\{\sqrt{\beta}\,\bigl(y_i^k(\beta)-\mathsf{y}_i^k\bigr)\bigr\}$
	converges in distribution to $\{\zeta_i^k\}$.
\end{enumerate}
In particular, the limit is independent of the perturbation $\mathbf{a}$.
\end{theorem}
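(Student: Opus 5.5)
The plan is to compare the perturbed law with the unperturbed one by a change of measure, and then read off both claims from \Cref{thm:GM_crystallization}. With the top row $\mathbf{z}$ fixed, \eqref{eq:perturbed_corners_conditional} shows that the conditional density of $\mathbf{Y}$ is
\begin{equation*}
\Lambda_{\mathbf{a}}(\mathbf{Y};\mathbf{z})=\frac{\Lambda(\mathbf{Y};\mathbf{z})\,W(\mathbf{Y})}{\E_\beta[W]},\qquad W(\mathbf{Y})\coloneqq\exp\Bigl[\sum_{k=1}^{N-1}(a_k-a_{k+1})\sum_{i=1}^k y^k_i\Bigr].
\end{equation*}
Here $\E_\beta$ is the expectation under the unperturbed $\beta$-corners process with top row $\mathbf{z}$. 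By \eqref{eq:perturbed_Bessel_identity_proof_first_display}, $\E_\beta[W]=B_{\mathbf{a}}(\mathbf{z};\beta)e^{-a_N\sum z_i}$, but we do not need this formula. So for any bounded continuous test function $\varphi$ of the array $\{x^k_i\}$,
\begin{equation*}
\E^{\mathbf{a}}_\beta[\varphi]=\frac{\E_\beta[\varphi\, W]}{\E_\beta[W]}.
\end{equation*}

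The key observation is that $W$ is a bounded continuous function of $\mathbf{Y}$, uniformly in $\beta$. On the Gelfand--Tsetlin polytope every $y^k_i$ lies in $[z_N,z_1]$, so $W$ takes values in a fixed interval $[c,C]$ with $0<c\le C<\infty$ that depends only on $\mathbf{a}$ and $\mathbf{z}$. This is where a fixed top row matters.

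For part (i), take $\varphi=\mathbf 1_{\{|y^k_i-\mathsf y^k_i|>\varepsilon\}}$. Then
\begin{equation*}
\P^{\mathbf{a}}_\beta\bigl(|y^k_i-\mathsf y^k_i|>\varepsilon\bigr)\le \frac{C}{c}\,\P_\beta\bigl(|x^k_i-\mathsf y^k_i|>\varepsilon\bigr)\to0
\end{equation*}
by \Cref{thm:GM_crystallization}(i).

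For part (ii), write $\mathbf{y}=\mathsf Y+\beta^{-1/2}\boldsymbol\zeta_\beta$ and evaluate $W$ along this parametrization. The function $W(\mathsf Y+\beta^{-1/2}\boldsymbol\zeta)$ converges to the constant $W(\mathsf Y)$ for each fixed $\boldsymbol\zeta$. Convergence is not uniform in $\boldsymbol\zeta$, but $W$ is bounded, and part (i) gives $W(\mathbf Y)\to W(\mathsf Y)$ in probability under $\P_\beta$. For bounded continuous $\varphi$ of the rescaled fluctuations, we then have
\begin{equation*}
\E_\beta[\varphi W]-W(\mathsf Y)\,\E_\beta[\varphi]\to0
\quad\text{and}\quad
\E_\beta[W]\to W(\mathsf Y)>0,
\end{equation*}
both by bounded convergence in probability. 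Hence
\begin{equation*}
\E^{\mathbf a}_\beta[\varphi]-\E_\beta[\varphi]\to0.
\end{equation*}
Combined with \Cref{thm:GM_crystallization}(ii), this gives convergence in distribution to the dGFF $\{\zeta^k_i\}$.

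I do not expect a real obstacle here. The only point needing care is that the tilt $W$ is not asymptotically negligible pointwise in the fluctuation variables on an unbounded domain, yet its effect cancels. This is handled by the uniform bounds $c\le W\le C$ together with convergence in probability, rather than by Taylor-expanding $W$. It also explains why the perturbation disappears in the limit: $W$ contributes only $O(1)$ to the log-density, whereas the repulsion terms in \eqref{eq:unperturbed_links} scale like $\beta$.
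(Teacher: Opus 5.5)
Your proposal is correct and is essentially the paper's proof. At fixed top row, the perturbed law is the unperturbed corners process tilted by the $\beta$-independent weight $W$, which is bounded between positive constants on the Gelfand--Tsetlin polytope, so both parts transfer from \Cref{thm:GM_crystallization} by bounded convergence; your bound $\P^{\mathbf a}_\beta(A)\le (C/c)\,\P_\beta(A)$ in part (i) is a slightly more direct variant of the paper's test-function argument, and in part (ii) the convergence $W(\mathbf{Y})\to W(\mathsf{Y})$ under the \emph{unperturbed} law $\P_\beta$ comes from \Cref{thm:GM_crystallization}(i) rather than from part (i) of the present theorem.
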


\begin{proof}
The top row is pinned: $y_i^N=z_i=\mathsf{y}_i^N$ and $\zeta_i^N=0$, so the case
$k=N$ is trivial. Let us consider the random interlacing array
$\mathbf{Y}=(\mathbf{y}^1,\ldots,\mathbf{y}^{N-1})$.
Set
\begin{equation}
	\label{eq:tilt}
	h(\mathbf{Y})\coloneqq
	\exp\left[\sum_{k=1}^{N-1}\sum_{i=1}^{k}y_i^k(a_k-a_{k+1})\right].
\end{equation}
In the perturbed
density~\eqref{eq:perturbed_corners_density}--\eqref{eq:perturbed_corners_conditional}
the three factors $f_{\mathbf{a}}(\mathbf{z})$, $B_{\mathbf{a}}(\mathbf{z};\beta)^{-1}$
and $\exp\bigl(a_N\sum_i z_i\bigr)$ do not depend on $\mathbf{Y}$. Hence, at the
fixed top row $\mathbf{z}$, the perturbed process is the unperturbed
$\beta$-corners process~\eqref{eq:unperturbed_links} reweighted by $h$: writing
$\E_{\mathbf{a}}$ and $\E$ for the expectations under the perturbed and the
unperturbed law with top row $\mathbf{z}$, we have
\begin{equation}
	\label{eq:tilt_expectation}
	\E_{\mathbf{a}}\bigl[G(\mathbf{Y})\bigr]
	=\frac{\E\bigl[G(\mathbf{Y})\ssp h(\mathbf{Y})\bigr]}{\E\bigl[h(\mathbf{Y})\bigr]}
	\qquad\text{for every bounded measurable }G.
\end{equation}
Here both densities integrate to $1$ in $\mathbf{Y}$: for
$\Lambda(\,\cdot\,;\mathbf{z})$ this is the Dixon--Anderson
evaluation~\eqref{eq:Dixon_Anderson}, and for
$\Lambda_{\mathbf{a}}(\,\cdot\,;\mathbf{z})$ it follows from 
\eqref{eq:perturbed_Bessel_identity_proof_first_display}.

Note that $h$ does not depend on $\beta$.
Moreover, interlacing confines $\mathbf{Y}$ to the bounded Gelfand--Tsetlin polytope
$\{\mathbf{y}^1\prec\cdots\prec\mathbf{y}^{N-1}\prec\mathbf{z}\}$, on which every
coordinate obeys $z_N\le y_i^k\le z_1$, so that
\begin{equation}
	\label{eq:tilt_bounds}
	\bigl|\log h(\mathbf{Y})\bigr|
	\le N^2\max_{1\le k\le N-1}|a_k-a_{k+1}|\cdot\max_{1\le i\le N}|z_i|
	\eqqcolon C_{\mathbf{a},\mathbf{z}}
	\qquad\text{for every }\beta>0 .
\end{equation}
Thus $h$ is bounded above and below by positive constants depending only on
$\mathbf{a}$ and $\mathbf{z}$. (For $N=1$ the array $\mathbf{Y}$ is empty and
$h\equiv1$, so both assertions are vacuous; we take $N\ge2$ below.)

\emph{(i)}
Let $G$ be bounded and continuous. By part~(i) of \Cref{thm:GM_crystallization},
$\mathbf{Y}\to\mathsf{Y}$ in probability under the unperturbed law; since $G$ and
$h$ are continuous, $G(\mathbf{Y})h(\mathbf{Y})\to G(\mathsf{Y})h(\mathsf{Y})$
and $h(\mathbf{Y})\to h(\mathsf{Y})$ in probability as well. Both sequences are
bounded, by hypothesis on $G$ and by~\eqref{eq:tilt_bounds}, so the expectations
converge: $\E[G h]\to G(\mathsf{Y})h(\mathsf{Y})$ and $\E[h]\to h(\mathsf{Y})>0$.
Now~\eqref{eq:tilt_expectation} gives $\E_{\mathbf{a}}[G]\to G(\mathsf{Y})$
for every bounded continuous $G$, which implies convergence in probability
under the perturbed law.

\emph{(ii)}
Put $\Delta\mathbf{Y}\coloneqq\sqrt{\beta}\,(\mathbf{Y}-\mathsf{Y})$ and
\begin{equation*}
	\varepsilon(\mathbf{Y})\coloneqq\frac{h(\mathbf{Y})}{h(\mathsf{Y})}-1
	=\exp\left[\sum_{k=1}^{N-1}\sum_{i=1}^{k}
		\bigl(y_i^k-\mathsf{y}_i^k\bigr)(a_k-a_{k+1})\right]-1 .
\end{equation*}
By~\eqref{eq:tilt_bounds} we have $|\varepsilon|\le e^{2C_{\mathbf{a},\mathbf{z}}}$
on the polytope, and by part~(i) of \Cref{thm:GM_crystallization}
the unperturbed array satisfies $\mathbf{Y}\to\mathsf{Y}$ in probability.
Since $h$ is continuous, $\varepsilon(\mathbf{Y})\to0$ in probability, and being
bounded it converges to $0$ in $L^1$ as well. Therefore, for bounded continuous $G$,
\begin{equation*}
	\E_{\mathbf{a}}\bigl[G(\Delta\mathbf{Y})\bigr]
	=\frac{\E\bigl[G(\Delta\mathbf{Y})(1+\varepsilon)\bigr]}{\E\bigl[1+\varepsilon\bigr]}
	=\frac{\E\bigl[G(\Delta\mathbf{Y})\bigr]
		+O\bigl(\lVert G\rVert_\infty\ssp\E|\varepsilon|\bigr)}
		{1+O\bigl(\E|\varepsilon|\bigr)}
	\xrightarrow[\beta\to\infty]{} \E\bigl[G(\zeta)\bigr],
\end{equation*}
because $\Delta\mathbf{Y}\to\zeta$ in distribution under the unperturbed law by
part~(ii) of \Cref{thm:GM_crystallization}. This completes the proof.
\end{proof}

\begin{remark}[Statistical mechanics interpretation]
	The $\beta$-corners process can be viewed as particles at inverse temperature $\beta$
	subject to logarithmic repulsion and a confining potential.
	As $\beta\to\infty$ (zero temperature), particles freeze to their ground state positions,
	which are the roots of polynomial derivatives.
	The perturbation~$\mathbf{a}$ acts as an external field of strength $O(1)$,
	while the repulsion grows as $O(\beta)$; the uniform
	bound~\eqref{eq:tilt_bounds} is the quantitative form of this mismatch.
	Thus, the field is too weak to move the ground state.
	The dGFF (\Cref{def:dGFF}) captures thermal fluctuations around the ground state, at the scale
	$O(\beta^{-1/2})$ set by the energy barrier of order $\beta$.
	Again, the perturbation leaves the fluctuations unaffected.
\end{remark}

\section{Zero-temperature asymptotics: linearly growing perturbation}
\label{sec:beta_inf_scaled}

We now consider the regime where the perturbation grows linearly with $\beta$:
\begin{equation}
	\label{eq:scaled_perturbation}
	a_i = \frac{\beta}{2}\mathsf{a}_i, \qquad i=1,\ldots,N,
\end{equation}
for fixed parameters $\mathsf{a} = (\mathsf{a}_1,\ldots,\mathsf{a}_N)\in\R^N$.
In this scaling, the external field competes with the eigenvalue repulsion.
Throughout the subsection, 
we refer to the special case
$\mathsf{a}_1=\cdots=\mathsf{a}_N$ as the \emph{constant
perturbation}.

\subsection{The deformed lattice: existence and uniqueness}

In the growing perturbation regime,
the limiting crystal lattice is \emph{deformed} compared to the unperturbed case:

\begin{definition}
\label{def:deformed_lattice}
Given a top row $\mathbf{z}=(z_1>\cdots>z_N)$ and parameters
$\mathsf{a}=(\mathsf{a}_1,\ldots,\mathsf{a}_N)$,
the \emph{deformed $\infty$-corners lattice}
$\{\bar{\mathsf{y}}_i^k\}_{1\le i\le k\le N}$ is defined by
$\bar{\mathsf{y}}_i^N = z_i$ for $i=1,\ldots,N$, and the remaining
positions $\{\bar{\mathsf{y}}_i^k\}_{1\le i\le k\le N-1}$ are the unique 
solution of the \emph{coupled optimality equations}:
\begin{equation}
	\label{eq:deformed_optimality}
	\sum_{\substack{j=1\\j\ne i}}^{k}\frac{1}{\bar{\mathsf{y}}_i^k - \bar{\mathsf{y}}_j^k}
	- \frac{1}{2}\sum_{q=1}^{k+1}\frac{1}{\bar{\mathsf{y}}_i^k - \bar{\mathsf{y}}_q^{k+1}}
	- \frac{1}{2}\sum_{p=1}^{k-1}\frac{1}{\bar{\mathsf{y}}_i^k - \bar{\mathsf{y}}_p^{k-1}}
	= \frac{\mathsf{a}_k - \mathsf{a}_{k+1}}{2},
	\qquad i=1,\ldots,k,
\end{equation}
for $k=1,\ldots,N-1$, subject to \emph{strict}
interlacing.  That is, $\bar{\mathsf{y}}^k \prec
\bar{\mathsf{y}}^{k+1}$, and
$\bar{\mathsf{y}}^k_i\ne\bar{\mathsf{y}}^{k+1}_j$ for all
$1\le k\le N-1$, $1\le i\le k$ and $1\le j\le k+1$.
Equivalently, $\bar{\mathsf{Y}}\coloneqq
\{\bar{\mathsf{y}}_i^k\}_{1\le i\le k\le N-1}$ lies in
the open Gelfand--Tsetlin polytope, where every denominator
of~\eqref{eq:deformed_optimality} is nonzero.  Existence
and uniqueness 
of the solution to~\eqref{eq:deformed_optimality}
are established in
\Cref{prop:variational_deformed} below.
\end{definition}

\begin{remark}
	The system~\eqref{eq:deformed_optimality} couples all
	levels simultaneously: the equation for position
	$(i,k)$ involves levels $k-1$, $k$, and $k+1$. 
	For the constant perturbation,
	the right-hand side vanishes, and \eqref{eq:deformed_optimality}
	holds for the unperturbed lattice $\mathsf{Y}$
	(\Cref{def:deterministic_lattice}),
	thanks to 
	\eqref{eq:optimality_equations}--\eqref{eq:polynomial_identity3}.
	By uniqueness, the
	deformed lattice $\bar{\mathsf{Y}}$
	then coincides with 
	$\mathsf{Y}$.
\end{remark}

The coupled optimality
equations~\eqref{eq:deformed_optimality} arise as
the critical
point conditions for the 
\emph{effective Hamiltonian}\footnote{For $N=1$ the array $\mathbf{Y}$ is
empty, so $H_{\mathsf{a}}\equiv0$, and the deformed lattice
is the top row itself. In the rest of this
subsection we thus take $N\ge2$.}
\begin{equation}
	\label{eq:effective_hamiltonian}
	H_{\mathsf{a}}(\mathbf{Y}) \coloneqq \sum_{k=1}^{N-1}\left[
		\sum_{1\le i<j\le k}\log(y_i^k - y_j^k)
		- \frac{1}{2}\sum_{p=1}^k\sum_{q=1}^{k+1}\log|y_p^k - y_q^{k+1}|
		- \frac{\mathsf{a}_k - \mathsf{a}_{k+1}}{2}\sum_{i=1}^k y_i^k
	\right],
\end{equation}
in which the top row $\mathbf{y}^N=\mathbf{z}$ is fixed
and $\mathbf{Y}=(\mathbf{y}^1,\ldots,\mathbf{y}^{N-1})$
ranges over the Gelfand--Tsetlin polytope
$\{\mathbf{y}^1\prec\cdots\prec\mathbf{y}^{N-1}\prec\mathbf{z}\}$.
It is read off from the joint density: at the fixed top
row, and under the
scaling~\eqref{eq:scaled_perturbation}, the
$\mathbf{Y}$-dependent part
of~\eqref{eq:perturbed_corners_density} (see also
\eqref{eq:perturbed_corners_density_explicit}) is exactly
$e^{-\beta H_{\mathsf{a}}(\mathbf{Y})}$ times a factor
not involving $\beta$.  

In the rest of this subsection, we establish two
properties of $H_{\mathsf{a}}$: it tends to $+\infty$ at
every boundary point of the polytope, and it is strictly
convex in the interior.  From these properties we deduce
the existence and uniqueness of the deformed lattice, and
later in \Cref{sub:crystallization_scaled}
we use them for the zero-temperature asymptotic analysis.

\begin{lemma}
\label{lem:boundary_blowup}
Let
\begin{equation}
	\label{eq:min_cross_gap}
	\rho(\mathbf{Y})\coloneqq\min_{1\le k\le N-1}\ \min_{1\le p\le k}\
	\min_{1\le q\le k+1}\bigl|y_p^k-y_q^{k+1}\bigr|,
	\qquad \mathbf{y}^N=\mathbf{z},
\end{equation}
denote the smallest cross-level gap of the array. There is a constant
$C=C(N,\mathbf{z},\mathsf{a})$ such that
\begin{equation}
	\label{eq:boundary_bound}
	H_{\mathsf{a}}(\mathbf{Y})\ \ge\ \frac12\ssp\log\frac{1}{\rho(\mathbf{Y})}-C
\end{equation}
at every point $\mathbf{Y}$ of the open Gelfand--Tsetlin polytope. In particular
$H_{\mathsf{a}}(\mathbf{Y})\to+\infty$ as $\mathbf{Y}$ approaches any
boundary point of the polytope, and every sublevel set of $H_{\mathsf{a}}$ is
a compact subset of its interior.
\end{lemma}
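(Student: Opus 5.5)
The plan is to obtain \eqref{eq:boundary_bound} by a purely combinatorial comparison inside each level. Every positive same-level term $\log(y_i^k-y_j^k)$ of $H_{\mathsf{a}}$ will be dominated from below by two of the negative cross-level terms. The leftover cross terms and the slack in these comparisons then carry the divergence. The linear part $-\tfrac12(\mathsf a_k-\mathsf a_{k+1})\sum_i y_i^k$ is bounded on the polytope, since $z_N\le y_i^k\le z_1$, exactly as in \eqref{eq:tilt_bounds}. So it only contributes to $C$.

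\emph{Step 1 (pairing).} Fix a level $k\le N-1$ and indices $i<j\le k$. Interlacing $\mathbf y^k\prec\mathbf y^{k+1}$ places $y^{k+1}_j\in[y^k_j,y^k_{j-1}]$ and $y^{k+1}_{i+1}\in[y^k_{i+1},y^k_i]$, and both intervals lie inside $[y^k_j,y^k_i]$. Set $A_{ij}\coloneqq y_i^k-y^{k+1}_j$ and $B_{ij}\coloneqq y^{k+1}_{i+1}-y^k_j$. Then both are at most $g_{ij}\coloneqq y_i^k-y_j^k$, so
\begin{equation*}
	\log g_{ij}=\tfrac12\log A_{ij}+\tfrac12\log B_{ij}
	+\tfrac12\log\frac{g_{ij}}{A_{ij}}+\tfrac12\log\frac{g_{ij}}{B_{ij}},
\end{equation*}
and both slack terms are nonnegative.

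The map $(i,j)\mapsto\{(p,q)=(i,j),\,(p,q)=(j,i+1)\}$ is injective into the cross pairs $\{(p,q):1\le p\le k,\ 1\le q\le k+1\}$. Its image consists of all pairs with $2\le q\le k$. The leftover pairs are therefore exactly $(p,1)$ and $(p,k+1)$, one of each for every $p$. Summing over levels yields an identity
\begin{equation*}
	H_{\mathsf a}(\mathbf Y)=\sum_{k}\Bigl[\tfrac12\sum_{i<j}\Bigl(\log\tfrac{g_{ij}}{A_{ij}}+\log\tfrac{g_{ij}}{B_{ij}}\Bigr)
	-\tfrac12\sum_{p}\bigl(\log|y^k_p-y^{k+1}_1|+\log|y^k_p-y^{k+1}_{k+1}|\bigr)\Bigr]+O(1).
\end{equation*}
Every summand here is bounded below by $-\tfrac12\log(z_1-z_N)$.

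\emph{Step 2 (locating $\rho$).} Let $(p,q)$ at levels $(k,k+1)$ realize $\rho(\mathbf Y)$. If it is a leftover pair ($q=1$ or $q=k+1$), its term alone gives $\tfrac12\log(1/\rho)$, and all other terms are bounded below. Otherwise it is some $A_{ij}$ or $B_{ij}$. If the corresponding $g_{ij}$ is bounded below by a constant, the slack $\tfrac12\log(g_{ij}/\rho)$ gives the bound.

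The remaining case, which I expect to be the main obstacle, is when $g_{ij}$ itself is small. Then a whole cluster of level-$k$ points $y_i^k,\dots,y_j^k$ has width $\delta\lesssim$ small. By interlacing this cluster traps the $j-i$ points $y^{k+1}_{i+1},\dots,y^{k+1}_j$ in an interval of width $\delta$. These are themselves a level-$(k+1)$ cluster with one fewer point, so it traps $j-i-1$ points at level $k+2$, and so on. Since the top row $\mathbf z$ has gaps bounded below, the cascade must terminate at some level $k'$ where the cluster is a single point $y^{k'}_{r}$. Some level-$(k'-1)$ point within distance $\delta$ of it lies on an edge of its cluster. I would then show by induction along the cascade that a suitable telescoped combination of slack terms and leftover terms dominates $\tfrac12\log(\delta/\rho)+\tfrac12\log(1/\delta)=\tfrac12\log(1/\rho)$, up to constants depending on $N$ and $\mathbf z$. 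The intended mechanism is that the outermost pairs of a cluster at each level are charged with a slack of size $\log(\text{cluster width at the next level}/\text{width here})$. I would first try to organize this as an induction on the cluster size, with a constant depending only on $N$.

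\emph{Step 3 (consequences).} Approaching the boundary of the open polytope forces $\rho\to0$. Indeed, any degeneracy of the closed polytope, including a same-level coincidence, forces a cross-level coincidence by interlacing. Hence \eqref{eq:boundary_bound} gives $H_{\mathsf a}\to+\infty$ there. It then follows that $\{H_{\mathsf a}\le M\}\subset\{\rho\ge e^{-2(M+C)}\}$. The latter set is a closed subset of the bounded closed polytope lying in its interior, and continuity of $H_{\mathsf a}$ in the interior makes each sublevel set closed, hence compact.
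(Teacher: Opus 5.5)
\textbf{There is a genuine gap in Step~2.} Steps~1 and~3 are fine. In Step~1, the pairing of $g_{ij}$ with $A_{ij},B_{ij}$ is injective onto the cross pairs with $2\le q\le k$. So $H_{\mathsf a}$ is exactly a sum of nonnegative slacks plus the leftover terms at $q\in\{1,k+1\}$, which gives $H_{\mathsf a}\ge -C$. Step~3 is also correct.

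\textbf{Where the missing divergence actually lives.} The case you defer in Step~2 is the whole content of \eqref{eq:boundary_bound}, and your upward cascade cannot settle it. The nearest level-$(k+1)$ neighbours of $y^k_p$ are $y^{k+1}_p$ and $y^{k+1}_{p+1}$, so the pair realizing $\rho$ can be taken adjacent. The relevant cluster is then just two adjacent points $y^k_r>y^k_{r+1}$ at distance $\delta$, and your cascade stops at once at the single point $y^{k+1}_{r+1}$. Terms at or above level $k$ need not supply the missing $\tfrac12\log(1/\delta)$. Take $N=4$, $\mathbf z=(3,1,-1,-3)$, $\mathbf y^3=(2,0,-2)$, $\mathbf y^2=(\rho,\rho-\delta)$, $y^1_1=\rho-\delta/2$, and let $\rho=\delta^2\to0$.
\begin{itemize}
\item All terms of your decomposition attached to levels $2$ and $3$ sum to $\tfrac12\log(\delta/\rho)+O(1)=\tfrac12\log(1/\delta)+O(1)$, which is only half of $\tfrac12\log(1/\rho)$.
\item The other half is the level-$1$ leftover $-\tfrac12\log|y^1_1-y^2_1|-\tfrac12\log|y^1_1-y^2_2|=\log(2/\delta)$.
\end{itemize}
It comes from the point trapped in the short gap, i.e.\ from the bond \emph{below} the cluster, which your cascade never visits.

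In general the compensation comes from $y^{k-1}_r\in(y^k_{r+1},y^k_r)$. It enters through the slacks of the pairs $(r-1,r)$ and $(r,r+1)$ at level $k-1$, or through leftovers when $r\in\{1,k-1\}$. It has full size only if $y^{k-1}_{r\pm1}$ stay far away. If they sit at an intermediate distance $\epsilon$, you get only about $\log(\epsilon/\delta)$ there and must recurse further down, tracking several scales. That downward multi-scale induction is the missing argument.

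\textbf{How the paper avoids this.} It uses the dual pairing. Each cross factor $|u_p-w_q|$ of a bond $\mathbf u=\mathbf y^{k}\prec\mathbf w=\mathbf y^{k+1}$ is bounded by a gap of the \emph{upper} row: $w_q-w_{p+1}$ if $q\le p$, and $w_p-w_q$ if $q\ge p+1$. Every gap of $\mathbf w$ is used twice, so the bond absorbs $V(\mathbf w)$. Thus each level's Vandermonde is compensated by the bond below it, which is exactly where the divergence sits.

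The multiple scales are handled by the scale-invariant inductive bound
$\Psi(\mathbf Y;\mathbf w)\le V(\mathbf w)\sqrt{\rho(\mathbf Y;\mathbf w)/\gamma(\mathbf w)}$, proved by peeling off the top bond. Two two-row bounds cover the cases where $\rho$ is attained in the top bond or lower down:
\begin{itemize}
\item $\Pi\le V(\mathbf w)^2\gamma(\mathbf w)^{-1}\min_{p,q}|u_p-w_q|$;
\item $\Pi\le V(\mathbf w)^2\gamma(\mathbf w)^{-1}\gamma(\mathbf u)$.
\end{itemize}
In the second, the factor $\gamma(\mathbf u)$ cancels the $1/\gamma(\mathbf u)$ inherited from the induction hypothesis. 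This ratio $\rho/\gamma$ is what your cascade is missing.
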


\begin{proof}
The linear term of~\eqref{eq:effective_hamiltonian} is bounded on the bounded
polytope, so it suffices to bound its logarithmic part, which is
$-\log\Psi(\mathbf{Y})$ for
\begin{equation}
	\label{eq:geometric_factor}
	\Psi(\mathbf{Y})\coloneqq\prod_{k=1}^{N-1}
	\frac{\prod_{p=1}^{k}\prod_{q=1}^{k+1}\bigl|y_p^k-y_q^{k+1}\bigr|^{1/2}}
	{\prod_{1\le i<j\le k}(y_i^k-y_j^k)}\ssp.
\end{equation}
For a strictly decreasing $\mathbf{w}\in\R^{n}$ write
$\gamma(\mathbf{w})=\min_{p}(w_p-w_{p+1})$ for its smallest gap.
We also display the top row, writing $\Psi(\mathbf{Y};\mathbf{w})$ and
$\rho(\mathbf{Y};\mathbf{w})$ for the
quantities~\eqref{eq:geometric_factor} and~\eqref{eq:min_cross_gap} formed
from an array $\mathbf{y}^1\prec\cdots\prec\mathbf{y}^{n-1}\prec\mathbf{w}$,
so that $\Psi(\mathbf{Y})=\Psi(\mathbf{Y};\mathbf{z})$ and
$\rho(\mathbf{Y})=\rho(\mathbf{Y};\mathbf{z})$.
We prove the scale invariant bound
\begin{equation}
	\label{eq:geometric_bound}
	\Psi(\mathbf{Y};\mathbf{w})\ \le\
	V(\mathbf{w})\ssp\sqrt{\rho(\mathbf{Y};\mathbf{w})/\gamma(\mathbf{w})},
	\qquad n\ge2,
\end{equation}
by induction on the number of levels; taking $n=N$, $\mathbf{w}=\mathbf{z}$
and logarithms gives~\eqref{eq:boundary_bound}.

\medskip
Let $\mathbf{u}\in\R^{n-1}$ interlace
$\mathbf{w}\in\R^{n}$ and put
$\Pi=\prod_{p=1}^{n-1}\prod_{q=1}^{n}|u_p-w_q|$. For $1\le p\le n-1$ and
$1\le q\le n$ set $B_{p,q}=w_q-w_{p+1}$ if $q\le p$ and $B_{p,q}=w_p-w_q$ if
$q\ge p+1$. Each $B_{p,q}$ is a difference $w_i-w_j$ with $i<j$, so
$B_{p,q}\ge\gamma(\mathbf{w})$, and $w_{p+1}<u_p<w_p$ gives
$|u_p-w_q|<B_{p,q}$ in both cases. As $(p,q)$ runs over its range, the pairs
$(q,p+1)$ with $q\le p$ and the pairs $(p,q)$ with $q\ge p+1$ each run exactly
once through all pairs $i<j$ in $\{1,\ldots,n\}$, so that
$\prod_{p,q}B_{p,q}=V(\mathbf{w})^2$. Bounding all factors of $\Pi$ but one by
their $B_{p,q}$, and the omitted one from below by $\gamma(\mathbf{w})$,
therefore gives, for every $(p_0,q_0)$,
\begin{equation*}
	\Pi\ \le\ |u_{p_0}-w_{q_0}|\ssp\frac{V(\mathbf{w})^{2}}{\gamma(\mathbf{w})}\ssp.
\end{equation*}
Minimizing over $(p_0,q_0)$ yields
\begin{equation}
	\label{eq:two_row_gap}
	\Pi\ \le\ \frac{V(\mathbf{w})^{2}}{\gamma(\mathbf{w})}\ssp
	\min_{p,q}|u_p-w_q|\ssp,
\end{equation}
while choosing $p_0$ with $u_{p_0}-u_{p_0+1}=\gamma(\mathbf{u})$ and
$q_0=p_0+1$, for which $u_{p_0}-w_{p_0+1}<u_{p_0}-u_{p_0+1}$ because
$u_{p_0+1}<w_{p_0+1}$, yields
\begin{equation}
	\label{eq:two_row_mingap}
	\Pi\ \le\ \frac{V(\mathbf{w})^{2}}{\gamma(\mathbf{w})}\ssp\gamma(\mathbf{u})
	\qquad (n\ge3).
\end{equation}

\medskip
For $n=2$ the array is a single point, and
\eqref{eq:geometric_bound} says that the larger of its two gaps to
$\mathbf{w}$ is at most their sum.
Let $n\ge3$, assume~\eqref{eq:geometric_bound} for $n-1$, and peel off the
top bond, writing $\mathbf{u}=\mathbf{y}^{n-1}$ and
$\mathbf{Y}'=(\mathbf{y}^1,\ldots,\mathbf{y}^{n-2})$:
\begin{equation*}
	\Psi(\mathbf{Y};\mathbf{w})
	=\Psi(\mathbf{Y}';\mathbf{u})\ssp\frac{\Pi^{1/2}}{V(\mathbf{u})}
	\ \le\ \sqrt{\frac{\rho(\mathbf{Y}';\mathbf{u})\ssp\Pi}
	{\gamma(\mathbf{u})}}\ssp,
	\qquad
	\rho(\mathbf{Y};\mathbf{w})
	=\min\bigl\{\rho(\mathbf{Y}';\mathbf{u}),\ \min_{p,q}|u_p-w_q|\bigr\}.
\end{equation*}
If the minimum is the first term, \eqref{eq:two_row_mingap} bounds the display
by $V(\mathbf{w})\sqrt{\rho(\mathbf{Y};\mathbf{w})/\gamma(\mathbf{w})}$;
otherwise \eqref{eq:two_row_gap} bounds it by the same quantity times
$\sqrt{\rho(\mathbf{Y}';\mathbf{u})/\gamma(\mathbf{u})}$.
The latter factor is at most $1$
because the point of $\mathbf{y}^{n-2}$ inside the shortest gap of
$\mathbf{u}$ lies within $\gamma(\mathbf{u})$ of both its ends.
This proves~\eqref{eq:geometric_bound}.

Finally, $\rho(\mathbf{Y})\to0$ exactly when $\mathbf{Y}$ approaches the
boundary of the polytope, and $\{H_{\mathsf{a}}\le M\}$ is contained in
$\{\rho\ge e^{-2(M+C)}\}$, a compact subset of the interior.
\end{proof}

To address the convexity of $H_{\mathsf{a}}$, 
we need the following lemma:
\begin{lemma}
\label{lem:alternating}
Let $n\ge2$, let $t_1>\cdots>t_n$ or $t_1<\cdots<t_n$, and let
$u\in\R^n$. Then
\begin{equation}
	\label{eq:alternating_form}
	\sum_{1\le i<j\le n}(-1)^{i+j}\,\frac{(u_i-u_j)^2}{(t_i-t_j)^2}
	\;\le\;0,
\end{equation}
with equality if and only if $u_1=\cdots=u_n$.
\end{lemma}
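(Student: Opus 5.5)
Since $1/(t_i-t_j)^2$ is symmetric in $t_i,t_j$ and the sign $(-1)^{i+j}$ is unchanged under reversing the order of the indices, we may assume $t_1<\cdots<t_n$. Write the left-hand side of~\eqref{eq:alternating_form} as $Q_n(u)$. It is a quadratic form that vanishes on constant vectors, so it depends only on the increments $d_i=u_i-u_{i+1}$, $i=1,\ldots,n-1$, through $u_i-u_j=d_i+\cdots+d_{j-1}$. The claim is that $Q_n$ is negative definite as a form in $\mathbf{d}\in\R^{n-1}$; definiteness automatically gives the equality case. I would prove this by induction on $n$.

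\textbf{Base cases.} For $n=2$ we have $Q_2=-d_1^2/(t_1-t_2)^2$, which is negative for $d_1\ne0$. For $n=3$, write $\alpha=t_2-t_1$ and $\beta=t_3-t_2$. Then
\begin{equation*}
	Q_3=-\frac{d_1^2}{\alpha^2}-\frac{d_2^2}{\beta^2}+\frac{(d_1+d_2)^2}{(\alpha+\beta)^2}.
\end{equation*}
By Cauchy--Schwarz, $(d_1+d_2)^2\le(\alpha+\beta)^2\bigl(d_1^2/\alpha^2+d_2^2/\beta^2\bigr)\cdot\frac{\alpha^2+\beta^2}{(\alpha+\beta)^2}$, and $(\alpha^2+\beta^2)/(\alpha+\beta)^2<1$. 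This gives $Q_3<0$ for $\mathbf{d}\ne0$. The case $n=3$ is the model for the general step: a positive long-range term is dominated by the negative short-range terms it telescopes over, with room to spare.

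\textbf{Inductive step.} Remove the last point $t_n$ and view $Q_n$ as a quadratic in $d_{n-1}$ with $\mathbf{d}'=(d_1,\ldots,d_{n-2})$ fixed:
\begin{equation*}
	Q_n=Q_{n-1}(\mathbf{d}')+R(\mathbf{d}',d_{n-1}),
	\qquad
	R=\sum_{i=1}^{n-1}(-1)^{i+n}\frac{(u_i-u_n)^2}{(t_i-t_n)^2}.
\end{equation*}
The coefficient of $d_{n-1}^2$ in $R$ is $\sum_{i<n}(-1)^{i+n}(t_i-t_n)^{-2}$. This is an alternating sum whose terms decrease in absolute value as $i$ moves away from $n$, and whose leading term ($i=n-1$) is negative, so the coefficient is strictly negative. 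Maximizing over $d_{n-1}$ (a Schur complement) reduces the claim to showing that $Q_{n-1}(\mathbf{d}')$ plus the resulting correction term is negative definite in $\mathbf{d}'$.

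I would aim to show that this Schur-complemented form is again of the type~\eqref{eq:alternating_form}, for $n-1$ points with modified weights, or at least that it is bounded above by $Q_{n-1}$ evaluated at a suitably shifted configuration. To do this I would exploit the partial-fraction identity
\begin{equation*}
	\frac{1}{(t_i-t_n)(t_j-t_n)}=\frac{1}{t_j-t_i}\Bigl(\frac{1}{t_i-t_n}-\frac{1}{t_j-t_n}\Bigr),
\end{equation*}
which expresses the cross terms in $R$ through differences over the remaining points.

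\textbf{Main obstacle and fallback.} The hard step is controlling the Schur-complement correction in the inductive step. A naive termwise bound fails: the integral representation $1/(t_i-t_j)^2=\int_0^\infty s\,e^{-s|t_i-t_j|}\,ds$ yields, for each fixed $s$, an alternating form that is \emph{not} negative definite in general. So the argument must use the precise $1/x^2$ weights. As a fallback, I would pair each positive term $(i,j)$ with the two adjacent negative terms $(i,j-1)$ and $(j-1,j)$. The triple of points $t_i<t_{j-1}<t_j$ is exactly the $n=3$ configuration treated above, so the $n=3$ inequality applies to each such triple. I would then try to show that weighted combinations of these $n=3$ inequalities, summed over triples, cover every negative term with total coefficient at most one, using the slack $(\alpha^2+\beta^2)/(\alpha+\beta)^2<1$ obtained in the base case.
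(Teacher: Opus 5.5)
\textbf{There is a genuine gap: the inductive step is never carried out, and the fallback provably fails at $n=4$.} Your reduction to $t_1<\cdots<t_n$, the passage to the increments $d_i$, and both base cases are correct. The proposal stops exactly where the content of the lemma lies, and neither route you sketch closes.

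\textbf{The inductive step.} Negative definiteness of $Q_{n-1}$ is too weak a hypothesis. The maximized remainder $\max_{d_{n-1}}R(\mathbf{d}',d_{n-1})$ is in general not $\le 0$. Already for $n=3$ it equals $d_1^2/(\alpha^2+2\alpha\beta)>0$ for $d_1\ne0$. So the remainder has to be beaten quantitatively. For $n=3$ this works only because you can compare it with the exact value $-Q_2=d_1^2/\alpha^2$. For general $n$ you would need either an explicit description of this Schur complement or a sharper quantitative inductive statement, and you give neither. The hope that the Schur complement is again an alternating form with modified weights is not substantiated.

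\textbf{The fallback.} It fails as stated, however the Cauchy--Schwarz weights are tuned, and it already fails at the first case beyond your base cases. In your pairing, a negative term $(a,b)$ with $b-a\ge3$ is used only by the positive term $(a,b+1)$. So for $n=4$ the pair $(1,4)$ never enters. You would in effect be proving that the positive summands for $(1,3)$ and $(2,4)$ are dominated by the negative summands for $(1,2)$, $(2,3)$, $(3,4)$ alone, with coefficients at most one. That is false. Take $t=(0,1,4,5)$ and $u=(1,1,0,0)$:
\begin{itemize}
\item the positive summands total $\frac{1}{16}+\frac{1}{16}=\frac{1}{8}$;
\item the three short negative summands total only $\frac{1}{9}$;
\item it is precisely the discarded $\frac{1}{25}$ from the pair $(1,4)$ that makes the full sum negative.
\end{itemize}
The inequality is tight in a global way. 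For $t_k=u_k=k$ every summand is $\pm1$ and the sum is $-\lfloor n/2\rfloor$, against roughly $n^2/4$ summands of each sign. So local three-point estimates with slack per triple are not the right tool.

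\textbf{What the paper does instead.} It supplies a global identity.
\begin{itemize}
\item The sum is minus the second derivative of $\Phi(t)=\sum_{i<j}(-1)^{i+j}\log|t_i-t_j|$ along the line $t+su$.
\item Split $t$ into the interlacing odd- and even-indexed families $a,b$, and put $P=\prod_i(z-a_i)$, $Q=\prod_j(z-b_j)$.
\item Using the partial fraction expansion of $P/Q$, the paper shows that the rescaled Cauchy matrix with entries $\sqrt{\delta_i\gamma_j}/(a_i-b_j)$ is orthogonal, with one extra column when $n$ is odd. Here $\gamma_j,\delta_i>0$ are the absolute residues of $P/Q$ at $b_j$ and of $Q/P$ at $a_i$.
\item Differentiating $OO^{\mathsf{T}}=I$ twice along the line expresses $\frac{d^2}{ds^2}\Phi$ as an explicit sum of squares with positive weights (the squared entries of $O$).
\end{itemize}
This yields both the inequality and the equality case.
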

Observe that the sum in~\eqref{eq:alternating_form} 
is not invariant under permutations of the $t_i$'s
due to the signs $(-1)^{i+j}$, so we assume the $t_i$'s to be monotone. In fact, the statement becomes false if the $t_i$'s are not ordered.

\begin{proof}[Proof of \Cref{lem:alternating}]
Assume $t_1<\cdots<t_n$, the other case is equivalent.
Let
\begin{equation*}
	\Phi(t)\coloneqq\sum_{1\le i<j\le n}(-1)^{i+j}\log(t_j-t_i),
\end{equation*}
and move the points along $t_i(s)=t_i+s\ssp u_i$, the ordering persisting for
small $|s|$. Since
\begin{equation*}
\frac{d^{2}}{ds^{2}}\log\bigl(t_j(s)-t_i(s)\bigr)\big|_{s=0}
=-(u_j-u_i)^{2}/(t_j-t_i)^{2}, 
\end{equation*}
we have
\begin{equation}
	\label{eq:alt_convexity}
	\frac{d^{2}}{ds^{2}}\ssp\Phi\bigl(t(s)\bigr)\Big|_{s=0}
	=-\sum_{1\le i<j\le n}(-1)^{i+j}\ssp
	\frac{(u_i-u_j)^{2}}{(t_i-t_j)^{2}}.
\end{equation}

\medskip
\noindent\textbf{Step 1.}
Put $m\coloneqq\lfloor n/2\rfloor$ and
$\epsilon\coloneqq n-2m\in\{0,1\}$, and separate the odd and even positions,
\begin{equation*}
	a_i\coloneqq t_{2i-1},\quad \alpha_i\coloneqq u_{2i-1}
	\quad(1\le i\le m+\epsilon),
	\qquad
	b_j\coloneqq t_{2j},\quad \beta_j\coloneqq u_{2j}
	\quad(1\le j\le m),
\end{equation*}
so that $a_1<b_1<a_2<\cdots$. Two indices have equal parity exactly when they
lie in the same family, so
(recall that $V(\cdot)$ denotes the Vandermonde)
\begin{equation*}
	\Phi=\log V(a)+\log V(b)-\log\prod_{i,j}|a_i-b_j|\ssp.
\end{equation*}

\medskip
\noindent\textbf{Step 2.}
Let $P(z)\coloneqq\prod_i(z-a_i)$,
$Q(z)\coloneqq\prod_j(z-b_j)$, $R\coloneqq P/Q$, and set
\begin{equation*}
	c_j\coloneqq\frac{|P(b_j)|}{|Q'(b_j)|}>0,
	\qquad
	d_i\coloneqq\frac{|Q(a_i)|}{|P'(a_i)|}>0\ssp.
\end{equation*}
Counting negative factors in each product gives
$P(b_j)/Q'(b_j)=(-1)^{\epsilon}c_j$ and
$Q(a_i)/P'(a_i)=(-1)^{\epsilon+1}d_i$. Since $\deg P-\deg Q=\epsilon$,
the partial fraction expansion of $R$ reads
\begin{equation}
	\label{eq:alt_R_expansion}
	R(z)=\epsilon z+\mathrm{const}+(-1)^{\epsilon}\sum_{j=1}^{m}\frac{c_j}{z-b_j},
\end{equation}
and by definition we have
\begin{equation}
	\label{eq:alt_d_inverse}
	\frac{1}{d_i}=(-1)^{\epsilon+1}R'(a_i)\ssp.
\end{equation}

Let $i\ne k$. Both $a_i$ and $a_k$ are roots of $P$,
hence of $R$, so the divided difference
$\frac{R(a_i)-R(a_k)}{a_i-a_k}$ vanishes. 
We take the difference $R(a_i)-R(a_k)$ using \eqref{eq:alt_R_expansion} and
$\frac{1}{a_i-b_j}-\frac{1}{a_k-b_j}
=\frac{a_k-a_i}{(a_i-b_j)(a_k-b_j)}$, and get
\begin{equation*}
	0=\epsilon-(-1)^{\epsilon}
	\sum_{j=1}^{m}\frac{c_j}{(a_i-b_j)(a_k-b_j)}\ssp.
\end{equation*}
For $i=k$, differentiating the same expansion at $a_i$ gives
\begin{equation*}
	R'(a_i)=\epsilon-(-1)^{\epsilon}
	\sum_{j=1}^{m}\frac{c_j}{(a_i-b_j)^{2}}\ssp.
\end{equation*}
Multiplying both by $(-1)^{\epsilon+1}$, and using
$(-1)^{\epsilon+1}\epsilon=\epsilon$
(recall $\epsilon\in\{0,1\}$), together with
\eqref{eq:alt_d_inverse}, the two cases combine into
\begin{equation}
	\label{eq:alt_orthogonality}
	\epsilon+\sum_{j=1}^{m}\frac{c_j}{(a_i-b_j)(a_k-b_j)}
	=\frac{\mathbf{1}_{i=k}}{d_i}\ssp.
\end{equation}
Multiplying~\eqref{eq:alt_orthogonality} by $\sqrt{d_id_k}$ and setting
\begin{equation}
	\label{eq:alt_O_def}
	O_{i0}\coloneqq\sqrt{\epsilon\ssp d_i},
	\qquad
	O_{ij}\coloneqq\frac{\sqrt{d_ic_j}}{a_i-b_j}
	\quad(1\le j\le m),
\end{equation}
turns it into $OO^{\mathsf{T}}=I$. For $\epsilon=1$ the matrix $O$ is square
of size $m+1$; for $\epsilon=0$ its zeroth column vanishes and the remaining
$m\times m$ block is orthogonal. In either case the matrix formed by the
nonvanishing columns is square and orthogonal, so with
$w_{ij}\coloneqq O_{ij}^{2}=d_ic_j/(a_i-b_j)^{2}>0\ (1\le j\le m)$ both the rows and the
columns give
\begin{equation}
	\label{eq:alt_stochastic}
	\epsilon\ssp d_i+\sum_{j=1}^{m}w_{ij}=1,
	\qquad
	\sum_{i=1}^{m+\epsilon}w_{ij}=1\ssp.
\end{equation}
Finally,
\begin{equation*}
	\prod_j|P(b_j)|=\prod_i|Q(a_i)|=\prod_{i,j}|a_i-b_j|,
	\qquad
	\prod_j|Q'(b_j)|=V(b)^{2},
	\qquad
	\prod_i|P'(a_i)|=V(a)^{2},
\end{equation*}
so that
\begin{equation}
	\label{eq:alt_Phi_cd}
	\prod_j c_j=\frac{\prod_{i,j}|a_i-b_j|}{V(b)^{2}},
	\qquad
	\prod_i d_i=\frac{\prod_{i,j}|a_i-b_j|}{V(a)^{2}},
	\qquad 
	\Phi=-\frac12\sum_i\log d_i-\frac12\sum_j\log c_j\ssp.
\end{equation}

\medskip
\noindent\textbf{Step 3.}
Let the points move as above, so that
$a_i$, $b_j$, $c_j$, $d_i$, $w_{ij}$ become functions of $s$; dots denote
$d/ds$ at $s=0$. Since \eqref{eq:alt_orthogonality}
and~\eqref{eq:alt_stochastic} were derived at an arbitrary point of the chamber,
and the chamber is preserved for small $|s|$, they hold as identities in $s$ and
may be differentiated. Put
\begin{equation*}
	X_{ij}\coloneqq\frac{\alpha_i-\beta_j}{a_i-b_j},
	\quad
	\sigma_i\coloneqq\frac{d}{ds}\log d_i,
	\quad
	\kappa_j\coloneqq\frac{d}{ds}\log c_j,
	\quad
	Y_{ij}\coloneqq\frac{\sigma_i+\kappa_j}{2}-X_{ij}.
\end{equation*}
From $\log|O_{ij}|=\frac12\log d_i+\frac12\log c_j-\log|a_i-b_j|$
(where $O$ is defined in~\eqref{eq:alt_O_def}) and
$\frac{d}{ds}X_{ij}=-X_{ij}^{2}$ we get, for $1\le j\le m$,
\begin{equation*}
	\frac{d}{ds}\log|O_{ij}|=Y_{ij},
	\qquad
	\frac{d^{2}}{ds^{2}}\log|O_{ij}|
	=\tfrac12\dot\sigma_i+\tfrac12\dot\kappa_j+X_{ij}^{2},
\end{equation*}
while for $\epsilon=1$, where $O_{i0}=\sqrt{d_i}$, one has
$\frac{d}{ds}\log O_{i0}=\frac12\sigma_i$ and
$\frac{d^{2}}{ds^{2}}\log O_{i0}=\frac12\dot\sigma_i$; for $\epsilon=0$ the
zeroth column vanishes identically and contributes nothing. Since
$\frac{d^{2}}{ds^{2}}e^{2g}=2e^{2g}\bigl(g''+2(g')^{2}\bigr)$, differentiating
the first relation of~\eqref{eq:alt_stochastic} twice, dividing by $2$ and
summing over $i$, and then using both relations
of~\eqref{eq:alt_stochastic} to collapse the resulting weighted sums of
$\dot\sigma_i$ and $\dot\kappa_j$, gives
\begin{equation*}
	0=\frac12\sum_i\dot\sigma_i+\frac12\sum_j\dot\kappa_j
	+\frac{\epsilon}{2}\sum_id_i\ssp\sigma_i^{2}
	+\sum_{i,j}w_{ij}X_{ij}^{2}
	+2\sum_{i,j}w_{ij}Y_{ij}^{2}.
\end{equation*}
Differentiating the last identity 
in~\eqref{eq:alt_Phi_cd} twice yields
$\frac{d^{2}}{ds^{2}}\Phi=-\frac12\sum_i\dot\sigma_i-\frac12\sum_j\dot\kappa_j$,
so that
\begin{equation}
	\label{eq:alt_certificate}
	\frac{d^{2}}{ds^{2}}\ssp\Phi\bigl(t(s)\bigr)\Big|_{s=0}
	=\sum_{i,j}w_{ij}X_{ij}^{2}
	+2\sum_{i,j}w_{ij}Y_{ij}^{2}
	+\frac{\epsilon}{2}\sum_id_i\ssp\sigma_i^{2}.
\end{equation}
All the $w_{ij}$ and $d_i$ are positive, so the right-hand side
of~\eqref{eq:alt_certificate} is nonnegative; by~\eqref{eq:alt_convexity} this
is the asserted inequality.

\medskip
\noindent\textbf{Step 4.}
Finally, if the left-hand side of~\eqref{eq:alt_certificate}
vanishes, then $\sum_{i,j}w_{ij}X_{ij}^{2}=0$, and since every $w_{ij}$ is
positive, $X_{ij}=0$, that is $\alpha_i=\beta_j$, for all $i$ and $j$. Both
families are nonempty because $n\ge2$, so varying $i$ at fixed $j$ and then $j$
at fixed $i$ shows that all the $\alpha_i$ and all the $\beta_j$ equal one
common value, that is, $u_1=\cdots=u_n$. Conversely, a constant $u$ makes every
difference $u_i-u_j$ vanish.
\end{proof}

Let us denote the Hessian 
of the 
 effective Hamiltonian $H_{\mathsf{a}}$
\eqref{eq:effective_hamiltonian}
by 
\begin{equation}
	\label{eq:Q_form}
	Q_{\mathbf{Y}}(\mathbf{v})=
	-\frac{1}{2}
\sum_{i,k}\sum_{j,\ell}v_i^k\,
	\frac{\partial^2 H_{\mathsf{a}}}{\partial y_i^k\,\partial y_j^\ell}\,
	v_j^\ell
	=
	\sum_{k=1}^{N-1}\left[
		\sum_{1\le i<j\le k}\frac{(v_i^k-v_j^k)^2}{2(y_i^k-y_j^k)^2}
		-\sum_{p=1}^{k}\sum_{q=1}^{k+1}
			\frac{(v_p^k-v_q^{k+1})^2}{4(y_p^k-y_q^{k+1})^2}
	\right],
\end{equation}
acting on fields $\mathbf{v}=\{v_i^k\}$ with $v^N\equiv0$.
Here $\mathbf{Y}$ is an interior point of the Gelfand--Tsetlin polytope, so that all denominators are nonzero.
The second formula 
in \eqref{eq:Q_form}
readily follows
from \eqref{eq:effective_hamiltonian}.
In particular, the Hessian of $H_{\mathsf{a}}$ does not depend on
$\mathsf{a}$.

\begin{proposition}
\label{prop:strict_convexity}
The quadratic form $Q_{\mathbf{Y}}$ is negative definite, so
$H_{\mathsf{a}}$ is strictly convex on the Gelfand--Tsetlin polytope.
\end{proposition}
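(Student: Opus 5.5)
The plan is to decompose $Q_{\mathbf{Y}}$ into a sum of two-level pieces. Each piece will have exactly the shape of the alternating form in \Cref{lem:alternating}, applied to the merged sequence of two consecutive interlacing levels. For $1\le k\le N$ and a field $\mathbf{v}$, put
\begin{equation*}
	S_k\coloneqq\sum_{1\le i<j\le k}\frac{(v_i^k-v_j^k)^2}{(y_i^k-y_j^k)^2},
	\qquad
	C_k\coloneqq\sum_{p=1}^{k}\sum_{q=1}^{k+1}\frac{(v_p^k-v_q^{k+1})^2}{(y_p^k-y_q^{k+1})^2}
	\quad(k\le N-1),
\end{equation*}
so that \eqref{eq:Q_form} reads $Q_{\mathbf{Y}}(\mathbf{v})=\sum_{k=1}^{N-1}\bigl(\tfrac12 S_k-\tfrac14 C_k\bigr)$. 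Two boundary terms vanish: $S_1=0$ because level $1$ has a single point, and $S_N=0$ because $v^N\equiv0$. Using these, the sum can be regrouped exactly as
\begin{equation*}
	Q_{\mathbf{Y}}(\mathbf{v})=\frac14\sum_{k=1}^{N-1}\bigl(S_k+S_{k+1}-C_k\bigr),
\end{equation*}
because $\tfrac14\sum_{k=1}^{N-1}(S_k+S_{k+1})=\tfrac12\sum_{k=1}^{N-1}S_k-\tfrac14 S_1+\tfrac14 S_N$.

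Next I fix $k$ and use that $\mathbf{Y}$ is interior, so the interlacing is strict. Merge the two levels into one strictly decreasing sequence of length $n=2k+1$:
\begin{equation*}
	t=(y^{k+1}_1,\,y^k_1,\,y^{k+1}_2,\,y^k_2,\,\ldots,\,y^k_k,\,y^{k+1}_{k+1}),
\end{equation*}
and place the corresponding values of $\mathbf{v}$ at the same positions to form $u$. Odd positions carry level $k+1$ and even positions carry level $k$. Hence pairs of equal parity are exactly the same-level pairs, which enter \eqref{eq:alternating_form} with sign $+$, and pairs of opposite parity are exactly the cross-level pairs, which enter with sign $-$. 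The left-hand side of \eqref{eq:alternating_form} is therefore precisely $S_k+S_{k+1}-C_k$. \Cref{lem:alternating} then gives $S_k+S_{k+1}-C_k\le0$, with equality iff all of $v^k_1,\ldots,v^k_k,v^{k+1}_1,\ldots,v^{k+1}_{k+1}$ coincide. Summing over $k$ gives $Q_{\mathbf{Y}}\le0$.

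For definiteness, suppose $Q_{\mathbf{Y}}(\mathbf{v})=0$. Then every summand vanishes, so for each $k$ the levels $k$ and $k+1$ of $\mathbf{v}$ carry one common constant. Starting from $v^N\equiv0$ and descending $k=N-1,\ldots,1$, this forces $\mathbf{v}\equiv0$. Thus $Q_{\mathbf{Y}}$ is negative definite at every interior point, so the Hessian of $H_{\mathsf{a}}$, which equals $-2$ times the matrix of $Q_{\mathbf{Y}}$, is positive definite. Since the open Gelfand--Tsetlin polytope is convex (it is cut out by linear inequalities), $H_{\mathsf{a}}$ is strictly convex on it.

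The main obstacle is already handled by \Cref{lem:alternating}. What remains delicate is the bookkeeping in the first step: the coefficients $\tfrac12$ on same-level terms and $\tfrac14$ on cross-level terms must split exactly into the per-pair alternating forms. This works only because each interior level is shared by two adjacent pairs, while the two boundary levels contribute nothing ($S_1=0$ and $S_N=0$). I would double-check this regrouping, together with the parity and sign matching in the merged sequence, carefully.
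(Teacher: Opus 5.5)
Your proposal is correct and follows the paper's proof essentially step for step. You split $Q_{\mathbf{Y}}$ into the per-bond alternating forms $S_k+S_{k+1}-C_k$ on the merged sequence of levels $k$ and $k+1$. You then apply \Cref{lem:alternating} with $n=2k+1$, telescope using $S_1=S_N=0$, and propagate the equality case down from $v^N\equiv0$.
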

\begin{proof}
For a bond $(k,k+1)$, merge the two levels into the single decreasing
sequence
$t_1=y_1^{k+1}>t_2=y_1^{k}>t_3=y_2^{k+1}>\cdots>t_{2k+1}=y_{k+1}^{k+1}$,
and let $u\in\R^{2k+1}$ carry the values of $\mathbf{v}$ at the corresponding
positions, that is, $u_{2r-1}=v_r^{k+1}$ for $r=1,\ldots,k+1$ and
$u_{2r}=v_r^{k}$ for $r=1,\ldots,k$. Pairs of equal
parity are exactly the same-level pairs, and pairs of opposite parity the
cross-level ones, so \Cref{lem:alternating} with $n=2k+1$ states that
\begin{equation*}
	B_k \coloneqq
	\sum_{1\le i<j\le k}\frac{(v_i^k-v_j^k)^2}{(y_i^k-y_j^k)^2}
	+\sum_{1\le i<j\le k+1}
	\frac{(v_i^{k+1}-v_j^{k+1})^2}{(y_i^{k+1}-y_j^{k+1})^2}
	-\sum_{p=1}^{k}\sum_{q=1}^{k+1}
	\frac{(v_p^k-v_q^{k+1})^2}{(y_p^k-y_q^{k+1})^2}
	\;\le\;0,
\end{equation*}
with equality only if the entries of $v^k$ and $v^{k+1}$ are all equal to one
another. Level $1$ carries no same-level pair and $v^N\equiv0$, so each
same-level sum with $2\le k\le N-1$ appears in exactly two of the $B_k$ and
the boundary sums vanish; summing over the bonds therefore telescopes to
\begin{equation*}
	\sum_{k=1}^{N-1}B_k
	=2\sum_{k=1}^{N-1}\sum_{1\le i<j\le k}
	\frac{(v_i^k-v_j^k)^2}{(y_i^k-y_j^k)^2}
	-\sum_{k=1}^{N-1}\sum_{p=1}^{k}\sum_{q=1}^{k+1}
	\frac{(v_p^k-v_q^{k+1})^2}{(y_p^k-y_q^{k+1})^2}
	=4\ssp Q_{\mathbf{Y}}(\mathbf{v})\le0.
\end{equation*}
If $Q_{\mathbf{Y}}(\mathbf{v})=0$, then every $B_k$
vanishes, so for each $k$ the entries of $v^k$ and
$v^{k+1}$ are all equal to one another; consecutive bonds
overlap in a level, so all entries equal a single
constant, which is~$0$ because $v^N\equiv0$. Hence
$Q_{\mathbf{Y}}$ is negative definite and,
by~\eqref{eq:Q_form}, $H_{\mathsf{a}}$ is strictly
convex.
\end{proof}

Putting together \Cref{lem:boundary_blowup}
and \Cref{prop:strict_convexity}, we obtain:
\begin{proposition}
\label{prop:variational_deformed}
The deformed lattice
$\bar{\mathsf{Y}}=\{\bar{\mathsf{y}}_i^k\}$
of \Cref{def:deformed_lattice} exists, is unique, and is
the unique critical point, as well as the global minimizer, of
the effective Hamiltonian
$H_{\mathsf{a}}$ \eqref{eq:effective_hamiltonian}
in the Gelfand--Tsetlin polytope.
\end{proposition}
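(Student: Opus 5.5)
The argument combines the two preceding results in the standard way: existence of a minimizer comes from coercivity (\Cref{lem:boundary_blowup}), and uniqueness comes from strict convexity (\Cref{prop:strict_convexity}).

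\emph{Existence.} Fix any interior point $\mathbf{Y}_0$ of the Gelfand--Tsetlin polytope. Such a point exists because $\mathbf{z}$ is strict; for example, the undeformed lattice $\mathsf{Y}$ is interior. Set $M=H_{\mathsf{a}}(\mathbf{Y}_0)$. By \Cref{lem:boundary_blowup}, the sublevel set $\{H_{\mathsf{a}}\le M\}$ is a nonempty compact subset of the open polytope. $H_{\mathsf{a}}$ is continuous (indeed smooth) there, so it attains its minimum over this set at some interior point $\bar{\mathsf{Y}}$. This point is automatically a global minimizer over the whole open polytope. Since it is interior, the gradient of $H_{\mathsf{a}}$ vanishes at $\bar{\mathsf{Y}}$.

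Differentiating \eqref{eq:effective_hamiltonian} in $y_i^k$ gives exactly the left side of \eqref{eq:deformed_optimality} minus $(\mathsf{a}_k-\mathsf{a}_{k+1})/2$. Here one should check that the cross-level terms contribute correctly: $y_i^k$ appears in the bond $(k,k+1)$ and, for $k\ge2$, also in the bond $(k-1,k)$, producing the two halves. The level-$N$ contributions are absent because $\mathbf{z}$ is fixed. So critical points of $H_{\mathsf{a}}$ in the open polytope are precisely the solutions of \eqref{eq:deformed_optimality} with strict interlacing. This gives existence of the deformed lattice.

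\emph{Uniqueness.} The open polytope is convex, being an intersection of open half-spaces defined by the interlacing inequalities. By \Cref{prop:strict_convexity}, the Hessian of $H_{\mathsf{a}}$ is positive definite at every interior point, since it equals $-2Q_{\mathbf{Y}}$. Suppose $\mathbf{Y}_1\ne\mathbf{Y}_2$ were two critical points, and restrict to the segment $g(s)=H_{\mathsf{a}}(\mathbf{Y}_1+s(\mathbf{Y}_2-\mathbf{Y}_1))$, which stays inside the open polytope. Then $g''>0$, so $g'$ is strictly increasing, contradicting $g'(0)=g'(1)=0$. Hence the critical point is unique. It coincides with the minimizer found above, which is also therefore the unique global minimizer.

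The only mildly delicate points are convexity of the open domain, which is needed for the segment argument, and the bookkeeping of the factor $\tfrac12$ in the gradient. Neither is a real obstacle; the substantive work has already been done in \Cref{lem:boundary_blowup} and \Cref{prop:strict_convexity}.
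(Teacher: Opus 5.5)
Your proposal is correct and follows the paper's route: the paper obtains this proposition simply by combining the boundary blow-up of \Cref{lem:boundary_blowup}, which gives an interior minimizer through compact sublevel sets, with the strict convexity of \Cref{prop:strict_convexity} on the convex open polytope, which gives uniqueness of the critical point. Your write-up supplies the routine details the paper leaves implicit, namely the gradient computation matching \eqref{eq:deformed_optimality} and the segment argument for uniqueness.
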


\begin{remark}
	\label{rmk:convexity_vs_GM}
	Let us compare our variational argument with the
	level-by-level one of~\cite{GorinMarcus2020}*{Section~3.4},
	which in the unperturbed case established that the dGFF of
	\Cref{def:dGFF} is well defined, that is, that its
	density~\eqref{eq:dGFF_density} is integrable.

	At
	$\mathbf{Y}=\mathsf{Y}$, the
	$\infty$-corners lattice of
	\Cref{def:deterministic_lattice}, the negative
	definiteness of $Q_{\mathbf{Y}}$ is already contained
	in \Cref{rmk:dGFF_proper}. That argument integrates the
	levels one at a time, and each step uses that the level
	being integrated consists of the critical points of the
	polynomial with roots at the level above. Neither the
	deformed lattice nor a general point $\mathbf{Y}$
	of the interlacing
	polytope admits such a description, and strict
	convexity requires the Hessian condition at every point
	rather than at a lattice inside the polytope.
	\Cref{prop:strict_convexity} provides such a convexity
	statement.
\end{remark}

\subsection{Law of Large Numbers and Central Limit Theorem}
\label{sub:crystallization_scaled}

Under the scaling~\eqref{eq:scaled_perturbation}, the $\mathbf{Y}$-dependent
part of the perturbation in the joint
density~\eqref{eq:perturbed_corners_density} becomes:
\begin{equation*}
\exp\left[a_N\sum_{i=1}^N z_i + \sum_{k=1}^{N-1}\sum_{i=1}^k y_i^k(a_k - a_{k+1})\right]
	= \exp\left[\frac{\beta}{2}\left(\mathsf{a}_N\sum_{i=1}^N z_i
	+ \sum_{k=1}^{N-1}\sum_{i=1}^k y_i^k(\mathsf{a}_k - \mathsf{a}_{k+1})\right)\right].
\end{equation*}
This is now the same order as the repulsion terms.
The
remaining perturbation-dependent factors
of~\eqref{eq:perturbed_corners_density}, namely
$e^{-\frac12\sum_ia_i^2}$ and
$B_{\mathbf{a}}(\mathbf{z};\beta)$, do not depend on
$\mathbf{Y}$ and drop out of the conditional density
below.

By the definition of the perturbed corners process
(\Cref{def:perturbed_beta_corners}),
the factor $B_{\mathbf{a}}(\mathbf{z};\beta)$
in the perturbed G$\beta$E density~\eqref{eq:density_perturbed_GbetaE}
cancels with $1/B_{\mathbf{a}}(\mathbf{z};\beta)$
in the perturbed links~\eqref{eq:perturbed_corners_conditional}.
Since the top row is fixed, we pass to the conditional density of
$\mathbf{Y}$ given $\mathbf{z}$, discarding the factors that depend on
$\mathbf{z}$ alone.
Extracting the $\beta$-dependent terms from the
unperturbed links~\eqref{eq:unperturbed_links} and the perturbation,
the conditional density takes the exact form
\begin{equation}
	\label{eq:effective_density}
	p_\beta(\mathbf{Y}\mid\mathbf{z})
	= \frac{1}{Z_\beta}\,R(\mathbf{Y})\,
	\exp\left[-\beta \cdot H_{\mathsf{a}}(\mathbf{Y})\right],
	\qquad
	R(\mathbf{Y}) \coloneqq \prod_{k=1}^{N-1}
	\frac{\prod_{1\le i<j\le k}(y_i^k-y_j^k)^{2}}
	{\prod_{p=1}^{k}\prod_{q=1}^{k+1}\bigl|y_p^k-y_q^{k+1}\bigr|},
\end{equation}
for $\mathbf{Y}$ in the Gelfand--Tsetlin polytope,
where $H_{\mathsf{a}}$ is given by~\eqref{eq:effective_hamiltonian},
the residual factor $R$ does not depend on $\beta$,
and the normalizing constant $Z_\beta$ depends on $\beta$, $\mathbf{z}$,
and $\mathsf{a}$.
Clearly, on compact subsets of the open Gelfand--Tsetlin polytope, $R$ is bounded
between two positive constants.

Differentiating $H_{\mathsf{a}}$ with respect to $y_i^k$
and setting the result to zero recovers the coupled
optimality equations~\eqref{eq:deformed_optimality} from
\Cref{def:deformed_lattice}.

\begin{theorem}[Crystallization for scaled perturbation]
\label{thm:crystallization_scaled}
Let $\{y_i^k(\beta)\}_{1\le i\le k\le N}$ be the perturbed $\beta$-corners process
with scaled perturbation $a_i = \frac{\beta}{2}\mathsf{a}_i$ and fixed top row
$\mathbf{z}=(z_1>\cdots>z_N)$. Let $\{\bar{\mathsf{y}}_i^k\}$ be the deformed
$\infty$-corners lattice from \Cref{def:deformed_lattice}.
Then as $\beta\to\infty$:
\begin{equation*}
y_i^k(\beta) \to \bar{\mathsf{y}}_i^k \qquad \text{in probability}.
\end{equation*}
\end{theorem}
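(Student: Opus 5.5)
The plan is a Laplace-type concentration argument for the exact density \eqref{eq:effective_density}, built on \Cref{lem:boundary_blowup}, \Cref{prop:strict_convexity} and \Cref{prop:variational_deformed}. Write $H_{\min}\coloneqq H_{\mathsf{a}}(\bar{\mathsf{Y}})$ for the minimum. Fix $\delta>0$ and let $U_\delta$ be the open $\delta$-ball around $\bar{\mathsf{Y}}$; shrinking $\delta$, we may assume $\overline{U_\delta}$ lies in the interior of the polytope. It suffices to show
\begin{equation*}
	\P\bigl(\mathbf{Y}\notin U_\delta\bigr)=\frac{\int_{U_\delta^c}R\,e^{-\beta H_{\mathsf{a}}}\,d\mathbf{Y}}{\int R\,e^{-\beta H_{\mathsf{a}}}\,d\mathbf{Y}}\to0 .
\end{equation*}

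\emph{Lower bound for the denominator.} Since $H_{\mathsf{a}}$ is continuous at the interior point $\bar{\mathsf{Y}}$, for any $\eta>0$ there is a small ball $U'$ around $\bar{\mathsf{Y}}$ on which $H_{\mathsf{a}}\le H_{\min}+\eta$. On $U'$ the factor $R$ is bounded below by a positive constant, because $U'$ is compact in the interior. Hence the denominator is at least $c\,|U'|\,e^{-\beta(H_{\min}+\eta)}$.

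\emph{Gap outside $U_\delta$.} Here the main point is a uniform energy gap $H_{\mathsf{a}}\ge H_{\min}+2\eta$ on $U_\delta^c$ for some $\eta>0$. By \Cref{lem:boundary_blowup}, the sublevel set $K\coloneqq\{H_{\mathsf{a}}\le H_{\min}+1\}$ is compact in the interior. On the compact set $K\setminus U_\delta$, the continuous function $H_{\mathsf{a}}$ attains its minimum. That minimum is strictly larger than $H_{\min}$, since $\bar{\mathsf{Y}}$ is the unique minimizer by \Cref{prop:variational_deformed}. Outside $K$ we have $H_{\mathsf{a}}>H_{\min}+1$ by definition. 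Taking $2\eta$ smaller than both margins gives the gap.

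\emph{Upper bound for the numerator.} The obstacle is that $R$ is not bounded near the boundary: it has the factor $\prod|y^k_p-y^{k+1}_q|^{-1}$, which may fail to be integrable. I would absorb it into the energy. For $\beta\ge\beta_0$, write
\begin{equation*}
	R\,e^{-\beta H_{\mathsf{a}}}=\bigl(R\,e^{-\beta_0 H_{\mathsf{a}}}\bigr)\,e^{-(\beta-\beta_0)H_{\mathsf{a}}} .
\end{equation*}
The first factor is, up to a constant, the unnormalized density at $\beta_0$, which is integrable for any $\beta_0>0$ by \eqref{eq:Dixon_Anderson} and the boundedness of the tilt, as in \eqref{eq:tilt_bounds}. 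With $\beta_0=1$, say, the numerator is at most
\begin{equation*}
	C\,e^{-(\beta-1)(H_{\min}+2\eta)} .
\end{equation*}

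\emph{Conclusion.} The ratio is bounded by $C'e^{\beta_0 H_{\min}}\,e^{-\beta\eta}\to0$. Since $\delta>0$ was arbitrary, this gives convergence in probability jointly for all coordinates. Rows $k=N$ are trivial because they are pinned to $\mathbf{z}$.
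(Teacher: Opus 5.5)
Your argument is correct and is essentially the paper's: the proof of \Cref{thm:crystallization_scaled} goes through \Cref{lem:gaussian_tail_scaled}, whose first step is exactly your estimate. That step has the same three ingredients as yours: an energy gap outside a neighborhood of $\bar{\mathsf{Y}}$ (from \Cref{lem:boundary_blowup} and uniqueness of the minimizer), splitting off $R\,e^{-\beta_1 H_{\mathsf{a}}}$ as an integrable perturbed link at a fixed $\beta_1$, and a lower bound on $Z_\beta$ near $\bar{\mathsf{Y}}$, which together yield \eqref{eq:concentration_scaled}; the paper's further refinement to a Gaussian tail at scale $\beta^{-1/2}$ via the Hessian is needed only for the CLT, not for this law of large numbers.
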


Both the theorem and the central limit theorem below follow from the
following tail bound at the fluctuation scale $\beta^{-1/2}$.

\begin{lemma}
\label{lem:gaussian_tail_scaled}
In the setting of \Cref{thm:crystallization_scaled} there are
$C,c,\beta_1>0$, depending only on $N$, $\mathbf{z}$ and $\mathsf{a}$,
such that
\begin{equation}
	\label{eq:gaussian_tail_scaled}
	\P\Bigl(\sqrt{\beta}\ssp\max_{i,k}|y_i^k(\beta)-\bar{\mathsf{y}}_i^k|\ge s\Bigr)
	\le C\ssp e^{-c\ssp s^{2}}
	\qquad\text{for all }s>0\text{ and }\beta\ge\beta_1.
\end{equation}
\end{lemma}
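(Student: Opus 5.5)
We would run a Laplace-type argument directly on the exact density~\eqref{eq:effective_density}, $p_\beta\propto R\,e^{-\beta H_{\mathsf{a}}}$. The plan is to obtain a lower bound on $Z_\beta$ and an upper bound on the unnormalized mass of the event in~\eqref{eq:gaussian_tail_scaled}, and then compare the two. Write $\bar H\coloneqq H_{\mathsf{a}}(\bar{\mathsf{Y}})$, which is the global minimum by \Cref{prop:variational_deformed}, and work with $H_{\mathsf{a}}-\bar H\ge0$. Let $d=N(N-1)/2$ be the dimension of the polytope.

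\textbf{Step 1: local quadratic control near $\bar{\mathsf Y}$.} Fix a closed ball $K=\{|\mathbf Y-\bar{\mathsf Y}|\le r_0\}$ inside the open polytope. By \Cref{prop:strict_convexity} the Hessian of $H_{\mathsf a}$ is positive definite at every point of $K$. Continuity and compactness then give constants $0<m\le M$ such that
\[
m|\mathbf Y-\bar{\mathsf Y}|^2\le H_{\mathsf a}(\mathbf Y)-\bar H\le M|\mathbf Y-\bar{\mathsf Y}|^2
\qquad\text{on }K,
\]
using that the gradient vanishes at $\bar{\mathsf Y}$. On $K$ the factor $R$ lies between two positive constants. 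It follows that $Z_\beta\ge c_1 e^{-\beta\bar H}\beta^{-d/2}$ for $\beta\ge1$. The same estimates show that the mass of $\{\mathbf Y\in K:\ |\mathbf Y-\bar{\mathsf Y}|\ge s/\sqrt\beta\}$ is at most $C e^{-\beta\bar H}\beta^{-d/2}e^{-ms^2/2}$, by Gaussian integration after the change of variables $\sqrt\beta(\mathbf Y-\bar{\mathsf Y})$.

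\textbf{Step 2: away from $\bar{\mathsf Y}$.} Convexity of $H_{\mathsf a}$ on the convex open polytope lets us extend the quadratic growth to a linear growth outside $K$. Along the segment from $\bar{\mathsf Y}$ to any $\mathbf Y$, convexity gives
\[
H_{\mathsf a}(\mathbf Y)-\bar H\ge \frac{|\mathbf Y-\bar{\mathsf Y}|}{r_0}\, m r_0^2\ge \delta>0
\qquad\text{for }\mathbf Y\notin K.
\]
The main obstacle is the residual factor $R$, which is unbounded near the boundary because of the cross-level denominators. To handle it, we use \Cref{lem:boundary_blowup}. The bound $R\le C\rho(\mathbf Y)^{-d'}$ holds, where $d'$ is the number of cross pairs, since the numerators are bounded. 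Combined with $\rho\ge e^{-2(H_{\mathsf a}+C)}$, this gives
\[
R\le C' e^{2d'(H_{\mathsf a}-\bar H)}.
\]
Therefore $R\,e^{-\beta H_{\mathsf a}}\le C' e^{-\beta\bar H}e^{-(\beta-2d')(H_{\mathsf a}-\bar H)}$. For $\beta\ge\beta_1\coloneqq 4d'+1$, the mass outside $K$ is at most $C''e^{-\beta\bar H}e^{-\beta\delta/2}$, using the bounded volume of the polytope.

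\textbf{Step 3: conclusion.} Divide by the lower bound on $Z_\beta$. The inner contribution gives $Ce^{-ms^2/2}$. The outer contribution gives $C\beta^{d/2}e^{-\beta\delta/2}$, and it only matters when $s\le \sqrt\beta\,\mathrm{diam}$, since otherwise the event is empty. In that range, $e^{-\beta\delta/2}\beta^{d/2}\le C e^{-c s^2}$ with $c=\delta/(4\,\mathrm{diam}^2)$. Adding the two bounds yields~\eqref{eq:gaussian_tail_scaled}.
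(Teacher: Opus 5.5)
Your proof is correct. Its overall structure matches the paper's: a lower bound $Z_\beta\ge c\,\beta^{-N(N-1)/4}e^{-\beta H_0}$ from the positive definite Hessian at $\bar{\mathsf Y}$, a Gaussian comparison inside a ball, exponential smallness outside it, and the boundedness of the polytope to turn $e^{-c\beta}$ into $e^{-cs^2}$.

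The genuine difference is in the region away from $\bar{\mathsf Y}$, and specifically in how you neutralize the unbounded prefactor $R$.

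\emph{The paper's route.} It borrows a fixed amount of inverse temperature. On $K_\varepsilon$ it writes $e^{-\beta H_{\mathsf a}}\le e^{-(\beta-\beta_1)(H_0+\delta(\varepsilon))}e^{-\beta_1 H_{\mathsf a}}$. It then uses $\int R\,e^{-\beta_1 H_{\mathsf a}}<\infty$, which holds because this integrand is, up to a constant, the perturbed link at $\beta_1$; the Dixon--Anderson normalization does the work. The gap $\delta(\varepsilon)>0$ is obtained qualitatively, from compactness of sublevel sets and uniqueness of the minimizer.

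\emph{Your route.} You dominate $R$ pointwise: $R\le C\rho^{-d'}\le C'e^{2d'(H_{\mathsf a}-H_0)}$, using the quantitative rate $\frac12\log(1/\rho)$ in \eqref{eq:boundary_bound}. You then obtain the gap explicitly, $\delta=mr_0^2$, with linear growth along rays from $\bar{\mathsf Y}$ by convexity.

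\emph{Trade-off.} Your argument is self-contained within the analysis of $H_{\mathsf a}$, never invokes the normalizability of the model at another temperature, and gives explicit constants (for instance $\beta_1=4d'+1$). However, it genuinely needs the logarithmic blow-up rate and the global convexity. The paper's argument needs only qualitative blow-up and uniqueness of the minimizer, plus integrability at one fixed $\beta_1$, which comes for free from the probabilistic model.

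A minor difference: the paper uses two separate lower bounds on $Z_\beta$, one for the concentration estimate and one for the Gaussian regime. You use the single bound $c\,\beta^{-d/2}e^{-\beta H_0}$ and absorb the resulting factor $\beta^{d/2}$ into $e^{-\beta\delta/4}$, which is equally fine.
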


\begin{proof}
Write $H_0=H_{\mathsf{a}}(\bar{\mathsf{Y}})$, and for $\varepsilon>0$ put
$K_\varepsilon=\{\max_{i,k}|y_i^k-\bar{\mathsf{y}}_i^k|\ge\varepsilon\}$ and
$\delta(\varepsilon)=\inf_{K_\varepsilon}H_{\mathsf{a}}-H_0$, which is
positive because the infimum is attained (\Cref{lem:boundary_blowup}) and the
minimizer is unique (\Cref{prop:variational_deformed}).
Fix $\beta_1>0$. On $K_\varepsilon$ we have
$e^{-\beta H_{\mathsf{a}}}\le
e^{-(\beta-\beta_1)(H_0+\delta(\varepsilon))}\ssp e^{-\beta_1H_{\mathsf{a}}}$,
and $\int R\ssp e^{-\beta_1H_{\mathsf{a}}}<\infty$ because the integrand is a
constant multiple of the perturbed
link~\eqref{eq:perturbed_corners_conditional} at $\beta_1$; near
$\bar{\mathsf{Y}}$, where $R$ is bounded below and
$H_{\mathsf{a}}\le H_0+\delta(\varepsilon)/2$, we get
$Z_\beta\ge c\ssp e^{-\beta(H_0+\delta(\varepsilon)/2)}$. Together these give
\begin{equation}
	\label{eq:concentration_scaled}
	\P\bigl(K_\varepsilon\bigr)
	\le C(\varepsilon)\ssp e^{-\beta\ssp\delta(\varepsilon)/2},
	\qquad \beta\ge2\beta_1.
\end{equation}
By \Cref{prop:strict_convexity} the Hessian of $H_{\mathsf{a}}$ at
$\bar{\mathsf{Y}}$ is positive definite, so on a ball
$B_{r_0}(\bar{\mathsf{Y}})$ inside the Gelfand--Tsetlin polytope the
difference $H_{\mathsf{a}}-H_0$ is comparable to
$|\mathbf{Y}-\bar{\mathsf{Y}}|^{2}$, and there $R$ is bounded above and below.
Restricting the integral defining $Z_{\beta}$ to $B_{r_0}(\bar{\mathsf{Y}})$
and rescaling by $\sqrt{\beta}$ gives
$Z_{\beta}\ge c\ssp\beta^{-N(N-1)/4}e^{-\beta H_{0}}$.
Comparing the numerator and the normalization
of~\eqref{eq:effective_density} with Gaussian integrals
gives~\eqref{eq:gaussian_tail_scaled} for $s\le r_0\sqrt{\beta}$. For larger
$s$ we use~\eqref{eq:concentration_scaled} with $\varepsilon=r_0$; since the
Gelfand--Tsetlin polytope is bounded, the event
in~\eqref{eq:gaussian_tail_scaled} is empty unless $s^{2}=O(\beta)$, so that
bound is again of the form $C\ssp e^{-c\ssp s^{2}}$.
\end{proof}

\begin{proof}[Proof of \Cref{thm:crystallization_scaled}]
Apply \Cref{lem:gaussian_tail_scaled} with $s=\varepsilon\sqrt{\beta}$: for
every $\varepsilon>0$ one gets
\begin{equation*}
\P\bigl(\max_{i,k}|y_i^k(\beta)-\bar{\mathsf{y}}_i^k|\ge\varepsilon\bigr)
\le C\ssp e^{-c\ssp\varepsilon^{2}\beta}\to0,
\end{equation*}
which is the desired convergence in probability
(moreover, with an exponential rate).
\end{proof}

The fluctuations 
of the perturbed $\beta$-corners process
$\mathbf{Y}$
around the limiting lattice
$\bar{\mathsf{Y}}$
are again Gaussian,
but with a perturbed covariance structure.
We need the following object.

\begin{definition}[Deformed discrete Gaussian Free Field]
\label{def:deformed_dGFF}
The \emph{deformed dGFF} on the lattice $\{\bar{\mathsf{y}}_i^k\}$
is the centered Gaussian vector $\{\bar{\zeta}_i^k\}_{1\le i\le k\le N}$
with $\bar{\zeta}_i^N=0$ (pinned top row) and joint density proportional to
\begin{equation}
\label{eq:deformed_dGFF_density}
\exp\left(-\frac{1}{2}\sum_{i,k}\sum_{j,\ell} \bar{\zeta}_i^k \cdot \mathcal{H}_{(i,k),(j,\ell)} \cdot \bar{\zeta}_j^\ell\right),
\end{equation}
where $\mathcal{H}$ is the Hessian matrix of the effective Hamiltonian~\eqref{eq:effective_hamiltonian}
evaluated at the deformed lattice:
\begin{equation*}
\mathcal{H}_{(i,k),(j,\ell)} = \frac{\partial^2 H_{\mathsf{a}}}{\partial y_i^k \partial y_j^\ell}\bigg|_{\mathbf{Y}=\bar{\mathsf{Y}}}.
\end{equation*}
The matrix $\mathcal{H}=-2\ssp Q_{\bar{\mathsf{Y}}}$ 
given by \eqref{eq:Q_form}
is positive definite by
\Cref{prop:strict_convexity}, so the density
\eqref{eq:deformed_dGFF_density}
defines a centered
Gaussian vector.
\end{definition}

\begin{remark}
The quadratic form in the exponent
of~\eqref{eq:deformed_dGFF_density} is the same function
of the underlying lattice as in the unperturbed
case~\eqref{eq:dGFF_density}.
What changes is the lattice at which it is
evaluated, namely,~$\bar{\mathsf{Y}}$ in place of
$\mathsf{Y}$, and so $Q_{\bar{\mathsf{Y}}}$ depends on
the perturbation parameters 
$\mathsf{a}$ through the deformed
lattice~$\bar{\mathsf{Y}}$.
\end{remark}

\begin{theorem}[CLT for scaled perturbation]
\label{thm:CLT_scaled}
Under the assumptions of \Cref{thm:crystallization_scaled},
we have
\begin{equation*}
\sqrt{\beta}\,(y_i^k(\beta) - \bar{\mathsf{y}}_i^k) \xrightarrow{d} \bar{\zeta}_i^k,
\qquad \beta\to\infty,
\end{equation*}
jointly
for all $1\le i\le k\le N$,
where $\{\bar{\zeta}_i^k\}$ is the deformed dGFF from \Cref{def:deformed_dGFF}.
\end{theorem}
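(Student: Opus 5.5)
The plan is a standard Laplace-method argument on the exact density \eqref{eq:effective_density}, using \Cref{lem:gaussian_tail_scaled} to control everything away from the fluctuation scale.

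\emph{Change of variables.} Put $\boldsymbol{\eta}=\sqrt{\beta}\,(\mathbf{Y}-\bar{\mathsf{Y}})$, with $\eta^N\equiv 0$. By \eqref{eq:effective_density}, the density of $\boldsymbol{\eta}$ is proportional to
\begin{equation*}
	g_\beta(\boldsymbol{\eta})\coloneqq R\bigl(\bar{\mathsf{Y}}+\beta^{-1/2}\boldsymbol{\eta}\bigr)\,
	\exp\Bigl[-\beta\bigl(H_{\mathsf{a}}(\bar{\mathsf{Y}}+\beta^{-1/2}\boldsymbol{\eta})-H_0\bigr)\Bigr]
\end{equation*}
on the rescaled polytope. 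Fix $r_0$ as in the proof of \Cref{lem:gaussian_tail_scaled}, so that $B_{r_0}(\bar{\mathsf{Y}})$ lies in the open polytope and both $R$ and $H_{\mathsf{a}}$ are smooth there.

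\emph{Pointwise convergence and domination.} On $|\boldsymbol{\eta}|\le r_0\sqrt{\beta}$, Taylor expand to second order. The gradient of $H_{\mathsf{a}}$ vanishes at $\bar{\mathsf{Y}}$ by \Cref{prop:variational_deformed}. Hence
\begin{equation*}
	\beta\bigl(H_{\mathsf{a}}(\bar{\mathsf{Y}}+\beta^{-1/2}\boldsymbol{\eta})-H_0\bigr)\to\tfrac12\,\boldsymbol{\eta}^{\mathsf T}\mathcal{H}\boldsymbol{\eta}
\end{equation*}
pointwise, and by continuity $R(\bar{\mathsf{Y}}+\beta^{-1/2}\boldsymbol{\eta})\to R(\bar{\mathsf{Y}})>0$. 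Consequently $g_\beta(\boldsymbol{\eta})\to R(\bar{\mathsf{Y}})\,e^{-\frac12\boldsymbol{\eta}^{\mathsf T}\mathcal{H}\boldsymbol{\eta}}$.

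For domination, $\mathcal{H}$ is positive definite by \Cref{prop:strict_convexity}. After shrinking $r_0$, the Hessian of $H_{\mathsf{a}}$ is bounded below by $c_0 I$ on the ball, which gives $g_\beta(\boldsymbol{\eta})\le C e^{-c_0|\boldsymbol{\eta}|^2/2}$ there. Dominated convergence then shows that the integral of $g_\beta$ against any bounded continuous $G$ over the ball converges to the corresponding Gaussian integral, and likewise for the normalization.

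\emph{Excluding the outside of the ball.} The complement $|\boldsymbol{\eta}|>r_0\sqrt{\beta}$ has probability at most $Ce^{-c r_0^2\beta}$ by \Cref{lem:gaussian_tail_scaled}. So for bounded continuous $G$, $\E[G(\boldsymbol{\eta})]$ differs from the normalized ball integral by $o(1)$, where the denominator is bounded below by its Gaussian limit. The limiting density is proportional to $e^{-\frac12\boldsymbol{\eta}^{\mathsf T}\mathcal{H}\boldsymbol{\eta}}$, which is exactly \eqref{eq:deformed_dGFF_density}. This gives joint convergence in distribution to $\{\bar\zeta_i^k\}$, with the top row trivially pinned since $\bar\zeta^N_i=0=\eta^N_i$.

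The only delicate point is uniformity: we need the local quadratic lower bound together with the tail lemma so that no mass escapes. Both are supplied by \Cref{prop:strict_convexity} and \Cref{lem:gaussian_tail_scaled}, so the remaining steps are routine.
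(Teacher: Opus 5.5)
Your proposal is correct and follows the paper's Laplace-method route. You use the second-order expansion of $H_{\mathsf{a}}$ at its critical point $\bar{\mathsf{Y}}$, the positive definiteness of $\mathcal{H}$ from \Cref{prop:strict_convexity}, and \Cref{lem:gaussian_tail_scaled} to discard the exterior. The only difference is where you cut: the paper cuts at a fixed radius $s$ in fluctuation coordinates (uniform convergence on compacts, the Gaussian tail $e^{-cs^2}$, then $s\to\infty$), whereas you cut at $r_0\sqrt{\beta}$ and use a Gaussian envelope with dominated convergence, so you need only the large-deviation bound \eqref{eq:concentration_scaled} from the tail lemma.
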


\begin{proof}
Substituting
$y_i^k = \bar{\mathsf{y}}_i^k + \frac{\Delta y_i^k}{\sqrt{\beta}}$
into~\eqref{eq:effective_hamiltonian} and expanding
as $\beta\to\infty$,
we get
\begin{equation*}
	H_{\mathsf{a}}(\mathbf{Y}) = H_{\mathsf{a}}(\bar{\mathsf{Y}})
	+ \frac{1}{2\beta}\sum_{i,k,j,\ell}\mathcal{H}_{(i,k),(j,\ell)}\Delta y_i^k \Delta y_j^\ell
	+ O(\beta^{-3/2}),
\end{equation*}
uniformly for $\Delta\mathbf{Y}\coloneqq\{\Delta y_i^k\}_{1\le i\le k\le N-1}$
in compact sets (the top row is fixed, so $\Delta y_i^N=0$). Here the linear term
vanishes because $\bar{\mathsf{Y}}$ is a critical point of $H_{\mathsf{a}}$
(\Cref{prop:variational_deformed}),
and the remainder comes from the third-order Taylor terms of the finitely many
logarithmic factors of~\eqref{eq:effective_hamiltonian}, whose arguments stay
bounded away from zero because
$\bar{\mathsf{Y}}$ lies in the open Gelfand--Tsetlin polytope. Hence,
by~\eqref{eq:effective_density}, the density of
$\Delta\mathbf{Y}$, up to a factor independent of $\Delta\mathbf{Y}$,
converges to the deformed dGFF
density~\eqref{eq:deformed_dGFF_density} uniformly on compact sets. Indeed,
the residual factor contributes
$R(\bar{\mathsf{Y}}+\Delta\mathbf{Y}/\sqrt{\beta})/R(\bar{\mathsf{Y}})\to1$,
since $R$ is independent of $\beta$ and is continuous and positive at
$\bar{\mathsf{Y}}$.

The tail bound
(\Cref{lem:gaussian_tail_scaled})
upgrades
this local convergence to convergence in distribution.
Indeed, for a bounded continuous
$\varphi\colon\R^{N(N-1)/2}\to\R$ and $s>0$, split
the expectation
$\E[\varphi(\Delta\mathbf{Y})]$ and the normalization at
the ball $\{\max_{i,k}|\Delta y_i^k|\le s\}$: the
exterior contributes $O(\lVert\varphi\rVert_\infty
e^{-cs^2})$ by~\eqref{eq:gaussian_tail_scaled}, while
inside the ball the unnormalized density converges
uniformly, and the unknown normalizing constant of the
conditional density~\eqref{eq:effective_density} cancels
in the ratio over the common ball.
Letting $\beta\to\infty$ and then $s\to\infty$ identifies
the limit of $\E[\varphi(\Delta\mathbf{Y})]$ as the
expectation of $\varphi$ under the deformed dGFF.
This completes the proof.
\end{proof}

\subsection{The deformed lattice: shifted derivatives for a 
single spike}
\label{sec:shifted_derivative}

In the unperturbed case, the $\infty$-corners lattice is
built level by level starting from the top row
$\mathbf{z}$, and each subsequent level consists of the
roots of the derivative of the polynomial vanishing at
the level above (\Cref{def:deterministic_lattice}).
The \emph{shifted derivative} is defined for $c\in\R$ by
\begin{equation}
	\label{eq:shifted_deriv_operator}
	D_c f(x) \coloneqq f'(x) + c\,f(x) = e^{-cx}\frac{d}{dx}\bigl[e^{cx}f(x)\bigr].
\end{equation}
We prove that for a single spike in the last coordinate, 
that is, for perturbations of the form
$\mathsf{a}_1=\cdots=\mathsf{a}_{N-1}$, with $\mathsf{a}_N$ unrestricted,
the deformed lattice is built exactly as the unperturbed one, except that the
first step, from the top row to level $N-1$, applies $D_{\mathsf{c}}$ with
$\mathsf{c}=\mathsf{a}_{N-1}-\mathsf{a}_N$ in place of the derivative
(\Cref{prop:multilevel_shifted_deriv}).
For all other $\mathsf{a}$, including a single spike at any other coordinate,
the natural analogue of this construction fails
(\Cref{rmk:generic_failure}).

We first record the root structure of $D_c$ on polynomials with distinct real
roots.

\begin{proposition}
	\label{prop:shifted_derivative}
	Let $P(x) = \prod_{j=1}^N(x-\lambda_j)$ with $\lambda_1>\cdots>\lambda_N$,
	and let $c\in\R$; parts (i)--(iv) below assume $c\ne0$, while
	part~(v) treats the case $c=0$.
	\begin{enumerate}[\normalfont(i)]
		\item The polynomial $D_c P(x) = P'(x) + c\,P(x)$ has degree~$N$
		with leading coefficient~$c$.
		Its roots are precisely the critical points of $g(x) = e^{cx}P(x)$.
		\item Exactly $N-1$ of the $N$ roots of $D_c P$ lie in the interlacing
		intervals $(\lambda_{j+1},\lambda_j)$, $j=1,\ldots,N-1$,
		one root per interval.
		\item The remaining root $r_*$ satisfies
		\begin{equation*}
r_* < \lambda_N \;\text{ if } c>0,
			\qquad
			r_* > \lambda_1 \;\text{ if } c<0.
		\end{equation*}
		\item The sum of all $N$ roots equals
		$\sum_{j=1}^N \lambda_j - N/c$.
		\item When $c=0$, the polynomial $D_0P=P'$ has degree $N-1$, and all
		its roots lie in the interlacing intervals, one root per interval;
		there is no root outside $[\lambda_N,\lambda_1]$.
	\end{enumerate}
\end{proposition}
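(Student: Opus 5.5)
The proof is elementary and I would run it through sign changes of $D_cP$ together with the identity $D_cP=e^{-cx}(e^{cx}P)'$ from \eqref{eq:shifted_deriv_operator}.

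For (i), write $D_cP=P'+cP$. Since $\deg P'=N-1$ and $P$ is monic of degree $N$, the polynomial $D_cP$ has leading term $c\,x^N$. Because $e^{-cx}\neq0$, its roots are exactly the zeros of $g'=(e^{cx}P)'$, that is, the critical points of $g$.

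For (ii), evaluate $D_cP$ at the simple roots: $D_cP(\lambda_j)=P'(\lambda_j)$. These values alternate in sign in $j$, with $\operatorname{sign}P'(\lambda_j)=(-1)^{j-1}$ for decreasing $\lambda_j$. The intermediate value theorem then gives a root in each of the $N-1$ intervals $(\lambda_{j+1},\lambda_j)$. Equivalently, Rolle's theorem applied to $g$ gives the same roots. Since the total count is $N$, each interval holds exactly one root, provided I show the last root lies outside $[\lambda_N,\lambda_1]$. That is settled in (iii), and the same count shows there cannot be two roots in one interval.

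For (iii), compare signs at the ends. Take $c>0$ first. As $x\to-\infty$, $D_cP(x)\sim c\,x^N$, whose sign is $(-1)^N$. At $\lambda_N$, $D_cP(\lambda_N)=P'(\lambda_N)$, whose sign is $(-1)^{N-1}$. These are opposite, so there is a root $r_*<\lambda_N$. This root is real and distinct from the interval roots, and the degree count forces it to be the remaining one. For $c<0$, as $x\to+\infty$ the sign of $D_cP$ is that of $c$, namely negative, while $P'(\lambda_1)>0$; so $r_*>\lambda_1$.

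I expect the main point needing care to be the bookkeeping that excludes a second root in some interval. This is handled by the count: $N-1$ interval roots plus $r_*$ already account for all $N$ roots, all of which are real.

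For (iv), apply Vieta's formulas. The roots sum to $-(\text{coefficient of }x^{N-1})/c$. The $x^{N-1}$ coefficient of $cP+P'$ is $-c\sum_j\lambda_j+N$. Hence the sum of the roots is $\sum_j\lambda_j-N/c$.

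Part (v) is the classical Rolle argument. $P'$ has degree $N-1$, Rolle gives one root in each of the $N-1$ intervals, and these exhaust all the roots of $P'$.
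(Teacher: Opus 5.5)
Your proof is correct and follows essentially the paper's argument. The paper also gets the interior roots from Rolle on $g=e^{cx}P$, which is equivalent to your sign alternation of $P'(\lambda_j)$. It locates the extra root by comparing the sign of $D_cP(\lambda_N)$, resp.\ $D_cP(\lambda_1)$, with the sign at $\mp\infty$, and then closes by the degree count, with Vieta for (iv) and Rolle for (v).
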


Of the $N$ roots of $D_cP$ for $c\ne0$, we call the $N-1$ roots in the
interlacing intervals the \emph{interlacing roots}, and the remaining root
$r_*$ the \emph{spurious root}. The constructions of this subsection use the
interlacing roots and discard the spurious one; in the 
zero-temperature
up transition of
\Cref{sec:up_transition_scaled}, 
the spurious root plays the role of one of the limiting
positions.

\begin{proof}[Proof of \Cref{prop:shifted_derivative}]
	(i) Since $P$ is monic of degree~$N$, $D_cP = P'+cP$ has degree~$N$
	with leading coefficient~$c$.
	The identity $D_cP = e^{-cx}\frac{d}{dx}[e^{cx}P]$ follows
	from the product rule; since $e^{-cx}\ne 0$, the roots of $D_cP$
	coincide with the zeros of $(e^{cx}P)' = g'(x)$.

	(ii)
	The function $g(x) = e^{cx}P(x)$ vanishes at each $\lambda_j$
	(since $P(\lambda_j)=0$) and $e^{cx}>0$.
	Between consecutive nodes $\lambda_{j+1}$ and $\lambda_j$,
	the polynomial $P$ changes sign at each simple root,
	so $g$ changes sign as well.
	By Rolle's theorem, $g'$ has at least one root in every interval
	$(\lambda_{j+1},\lambda_j)$, giving at least $N-1$ interlacing
	critical points, that is, roots of $D_cP$.
	One further real root lies outside $[\lambda_N,\lambda_1]$.
	Indeed,
	$D_cP(\lambda_j) = P'(\lambda_j)=\prod_{i\ne j}(\lambda_j-\lambda_i)$,
	whose sign is $(-1)^{j-1}$.
	If $c>0$, then $D_cP(x)\sim c\ssp x^N$ has sign $(-1)^N$ as
	$x\to-\infty$, opposite to the sign $(-1)^{N-1}$ of $D_cP(\lambda_N)$,
	so $D_cP$ has a root in $(-\infty,\lambda_N)$;
	if $c<0$, then $D_cP(x)\to-\infty$ as $x\to+\infty$ while
	$D_cP(\lambda_1)>0$, so $D_cP$ has a root in $(\lambda_1,\infty)$.
	This accounts for $N$ real roots; since $D_cP$ has degree~$N$, there
	are no others, and thus exactly one root lies in each interval
	$(\lambda_{j+1},\lambda_j)$ and exactly one outside
	$[\lambda_N,\lambda_1]$.

	(iii) For $c>0$: on $(-\infty,\lambda_N)$,
	$g(x)=e^{cx}P(x)$ has constant sign $(-1)^N$ and satisfies
	$g(x)\to 0$ as $x\to -\infty$ (since $e^{cx}\to 0$ dominates)
	while $g(\lambda_N)=0$.
	Thus $g$ achieves an extremum in $(-\infty,\lambda_N)$,
	placing the spurious critical point~$r_*$ there.
	The case $c<0$ is analogous, with $r_*>\lambda_1$.

	(iv)--(v) follow from the classical Vieta's formulas and the 
	Rolle's theorem argument.
\end{proof}

\begin{proposition}
	\label{prop:multilevel_shifted_deriv}
	Let $N\ge2$, let $\mathsf{a}_1=\cdots=\mathsf{a}_{N-1}$ with
	$\mathsf{a}_N$ arbitrary, and set
	$\mathsf{c} \coloneqq \mathsf{a}_{N-1}-\mathsf{a}_N$.
	Let $P_N(x)=\prod_{j=1}^{N}(x-z_j)$ be the top-row polynomial, and let
	$P_{N-1}$ be the monic polynomial of degree $N-1$ whose roots are the
	$N-1$ interlacing roots of $D_{\mathsf{c}} P_N$.
	Then, for each $k=1,\ldots,N-1$, the level-$k$ positions
	$\bar{\mathsf{y}}_1^k>\cdots>\bar{\mathsf{y}}_k^k$ of the deformed
	lattice are the roots of
	$\left(\frac{d}{dx}\right)^{N-1-k}P_{N-1}$.
	Equivalently, the levels $N-1,\ldots,1$ of the deformed lattice form the
	$\infty$-corners lattice of \Cref{def:deterministic_lattice} whose top
	row consists of the roots of $P_{N-1}$.
	For $\mathsf{c}=0$ we have $P_{N-1}=P_N'/N$, and the deformed lattice is
	the unperturbed one.
\end{proposition}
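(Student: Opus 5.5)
The plan is to guess the lattice explicitly and check it against the coupled optimality equations~\eqref{eq:deformed_optimality}. Uniqueness then comes for free from \Cref{prop:variational_deformed}.

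\emph{Candidate and interlacing.} Let $\mathbf{w}=(w_1>\cdots>w_{N-1})$ be the interlacing roots of $D_{\mathsf{c}}P_N$, so that $P_{N-1}(x)=\prod_i(x-w_i)$. Define the candidate array $\mathsf{Y}^{\mathbf{w}}$ by $\bar{\mathsf{y}}^{N}=\mathbf{z}$, $\bar{\mathsf{y}}^{N-1}=\mathbf{w}$, and let levels $N-2,\ldots,1$ be the $\infty$-corners lattice of \Cref{def:deterministic_lattice} with top row $\mathbf{w}$ (with $N-1$ in place of $N$). This array lies in the open Gelfand--Tsetlin polytope:
\begin{itemize}
\item by \Cref{prop:shifted_derivative}(ii) (or (v) when $\mathsf{c}=0$), each $w_i$ lies strictly inside $(z_{i+1},z_i)$;
\item derivatives of polynomials with simple real roots have simple roots strictly between consecutive roots (Rolle), so all lower levels interlace strictly.
\end{itemize}

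\emph{Levels $k\le N-2$.} Here the right-hand side of~\eqref{eq:deformed_optimality} is $(\mathsf{a}_k-\mathsf{a}_{k+1})/2=0$.
\begin{itemize}
\item The sum over level $k+1$ vanishes by~\eqref{eq:optimality_equations}, applied to the $\infty$-corners lattice with top row $\mathbf{w}$. This is legitimate because level $k$ consists of the roots of the derivative of the level-$(k+1)$ polynomial.
\item For $k\ge2$, identity~\eqref{eq:polynomial_identity3} (\Cref{lem:polynomial_identities} for that lattice) says the same-level sum equals one half of the sum over level $k-1$. So these two terms cancel.
\item For $k=1$ both of these sums are empty.
\end{itemize}
Hence~\eqref{eq:deformed_optimality} holds at every level $k\le N-2$.

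\emph{Level $k=N-1$.} The right-hand side is now $\mathsf{c}/2$.
\begin{itemize}
\item The same-level sum minus half the sum over level $N-2$ vanishes by~\eqref{eq:polynomial_identity3} at level $N-1$ of the lattice with top row $\mathbf{w}$; this needs $N-1\ge2$.
\item For $N=2$ both of these terms are absent.
\end{itemize}
It remains to show
\[
-\tfrac12\sum_{q=1}^{N}\frac{1}{w_i-z_q}=\frac{\mathsf{c}}{2},
\qquad\text{that is,}\qquad
\frac{P_N'(w_i)}{P_N(w_i)}=-\mathsf{c}.
\]
Since $P_N(w_i)\neq0$ by strict interlacing, this is equivalent to $D_{\mathsf{c}}P_N(w_i)=P_N'(w_i)+\mathsf{c}P_N(w_i)=0$, which holds by the choice of $\mathbf{w}$.

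\emph{Conclusion.} The candidate array therefore solves~\eqref{eq:deformed_optimality} in the open polytope. By \Cref{prop:variational_deformed} the solution is unique, so the candidate equals $\bar{\mathsf{Y}}$. The level-$k$ description follows: the roots of $(d/dx)^{N-1-k}P_{N-1}$ are exactly the level-$k$ entries of the $\infty$-corners lattice built from $\mathbf{w}$. For $\mathsf{c}=0$, \Cref{prop:shifted_derivative}(v) gives $P_{N-1}=P_N'/N$, and the array reduces to the unperturbed lattice.

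The only delicate points are bookkeeping ones. First, \Cref{lem:polynomial_identities} must be applied to the $\infty$-corners lattice with top row $\mathbf{w}$ rather than $\mathbf{z}$. Second, the small cases $N=2$ and $k=1$, where some sums are empty, need separate checking. Third, the sign at the top level must be tracked so that it produces $D_{\mathsf{c}}$ rather than $D_{-\mathsf{c}}$.
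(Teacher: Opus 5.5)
Your proposal is correct and follows the paper's proof essentially step for step. It establishes strict interlacing via \Cref{prop:shifted_derivative}(ii)/(v) and Rolle, cancels the same-level sum against the lower-level sum by \eqref{eq:polynomial_identity3} applied to the $\infty$-corners lattice with top row $\mathbf{w}$, shows the upper sum vanishes for $k\le N-2$, reduces the level-$(N-1)$ equation to $D_{\mathsf{c}}P_N(w_i)=0$ with the correct sign, and concludes by the uniqueness in \Cref{prop:variational_deformed}.
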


\begin{proof}
	By \Cref{prop:shifted_derivative}(ii) for
	$\mathsf{c}\ne0$, and (v) for $\mathsf{c}=0$, the roots
	of $P_{N-1}$ are distinct and strictly interlace with
	the top row, one in each gap $(z_{j+1},z_j)$. The roots
	of the successive derivatives interlace strictly by
	Rolle's theorem. Hence the array lies in the open
	Gelfand--Tsetlin polytope. By the uniqueness in
	\Cref{prop:variational_deformed}, it suffices to check
	that the array solves the coupled
	equations~\eqref{eq:deformed_optimality}.

	The levels $N-1,\ldots,1$ of the array form, by construction, the
	unperturbed $\infty$-corners lattice of \Cref{def:deterministic_lattice}
	with top row the roots of $P_{N-1}$, so \Cref{lem:polynomial_identities}
	applies to them. To identify the array with the deformed lattice, we
	compare~\eqref{eq:polynomial_identity3}
	with~\eqref{eq:deformed_optimality}: the same-level sum
	of~\eqref{eq:deformed_optimality} cancels against the lower-level one at
	every level (at $k=1$ both are empty). The equation at level $k$ thus
	reduces to
	\begin{equation*}
		-\frac12\sum_{q=1}^{k+1}
		\frac{1}{\bar{\mathsf{y}}_i^k-\bar{\mathsf{y}}_q^{k+1}}
		=\frac{\mathsf{a}_k-\mathsf{a}_{k+1}}{2},
		\qquad i=1,\ldots,k.
	\end{equation*}
	For $k\le N-2$ both sides vanish: the right-hand side because
	$\mathsf{a}_k=\mathsf{a}_{k+1}$, and the left-hand side because the
	level-$k$ points are the critical points of the monic polynomial
	$P_{k+1}$ with roots $\bar{\mathsf{y}}^{k+1}$, so that
	$\sum_{q}(\bar{\mathsf{y}}_i^k-\bar{\mathsf{y}}_q^{k+1})^{-1}
	=(P_{k+1}'/P_{k+1})(\bar{\mathsf{y}}_i^k)=0$.
	For $k=N-1$ the reduced equation reads
	$\sum_{j=1}^{N}(\bar{\mathsf{y}}_i^{N-1}-z_j)^{-1}=-\mathsf{c}$, that is,
	$P_N'(\bar{\mathsf{y}}_i^{N-1})
	+\mathsf{c}\ssp P_N(\bar{\mathsf{y}}_i^{N-1})=0$,
	which holds because $\bar{\mathsf{y}}_i^{N-1}$ is a root of
	$D_{\mathsf{c}}P_N$.
\end{proof}

\begin{remark}
	\label{rmk:generic_failure}
	For all other $\mathsf{a}$ --- those with
	$\mathsf{a}_k\ne\mathsf{a}_{k+1}$ for some $k\le N-2$,
	for instance a single spike at a coordinate other than
	the last (for $N\ge3$) --- the natural analogue of this
	construction fails.  The
	candidate is the top-down recursion that takes, at each
	level, the interlacing roots of $D_{\mathsf{a}_k-\mathsf{a}_{k+1}}$
	applied to the polynomial vanishing at the level above.
	It produces an interlacing array
	$\hat{\mathsf{Y}}=\{\hat{\mathsf{y}}_i^k\}$, but
	$\hat{\mathsf{Y}}\ne\bar{\mathsf{Y}}$.
	The latter may be verified by a direct
	computation, which we omit.
\end{remark}

% The omitted computation, kept here for posterity (not typeset):
%
% Write d_k = a_k - a_{k+1} and let P^_k be the monic polynomial vanishing
% at level k of Y^.  Suppose d_m \ne 0 for some m \le N-2, and put k=m+1,
% so that 2 \le k \le N-1.  Let x be one of the k roots of P^_k.  The
% recursion on the bond above gives P^_{k+1}'(x) + d_k P^_{k+1}(x) = 0,
% for d_k = 0 as well as for d_k \ne 0.  The bond below reads
%     P^_k'(u) + d_m P^_k(u) = d_m (u - r_*) P^_{k-1}(u),
% where r_* is the spurious root of D_{d_m} P^_k; differentiating in u and
% evaluating at u = x gives
%     P^_{k-1}'/P^_{k-1} = P^_k''/P^_k' + d_m - (x - r_*)^{-1}.
% Substituting both into \eqref{eq:deformed_optimality}, whose three sums at
% level k are, in terms of P^ evaluated at x,
%     sum_{j\ne i} 1/(x - y^_j^k)     = (1/2) P^_k''/P^_k',
%     sum_{q=1}^{k+1} 1/(x - y^_q^{k+1}) = P^_{k+1}'/P^_{k+1},
%     sum_{p=1}^{k-1} 1/(x - y^_p^{k-1}) = P^_{k-1}'/P^_{k-1},
% the left-hand side of \eqref{eq:deformed_optimality} minus its right-hand
% side collapses to
%     (1/2) ( (x - r_*)^{-1} - d_m ).
% Since x -> (x - r_*)^{-1} is injective, this vanishes for at most one of
% the k \ge 2 points of level k.  Hence Y^ does not solve
% \eqref{eq:deformed_optimality}, and by the uniqueness in
% \Cref{prop:variational_deformed} it differs from Ybar.
%
% Checked numerically: z=(1,0,-1), a=(3,0,0) gives level 2 at +-1/sqrt(3),
% level 1 at 1/3, spurious root r_*=-1, and residual (-3-sqrt3)/4 \ne 0 at
% x=1/sqrt(3).  Verified again independently on three N=4 configurations.

\section{Asymptotics of up transitions}
\label{sec:up_transitions}

The random polynomial equation
\eqref{eq:random_poly_general}
determining the conditional distribution
of the upper row $\boldsymbol{\lambda}$ given the lower row
$\boldsymbol{\mu}$, $\boldsymbol{\lambda}\succ\boldsymbol{\mu}$,
runs in the
direction opposite to \Cref{thm:crystallization_fixed} and
\Cref{thm:crystallization_scaled}.
In this section, we discuss the 
zero-temperature limit $\beta\to\infty$ of this conditional
density in various regimes, in the fixed and 
scaled perturbation settings.
Throughout the section $P_{\boldsymbol{\mu}}(x)=\prod_{i=1}^{N-1}(x-\mu_i)$,
and we assume $N\ge2$.

\subsection{Fixed perturbation}
\label{sec:up_transition_fixed}

\begin{proposition}
\label{prop:poly_limit_fixed}
Fix $a_N\in\R$, and let random $\boldsymbol{\lambda}$ be
obtained from a fixed $\boldsymbol{\mu}$ through~\eqref{eq:random_poly_general}.
Then, as $\beta\to\infty$:
\begin{enumerate}[\normalfont(i)]
	\item 
	If $\boldsymbol{\mu}$ does not depend on $\beta$, then
	the $N-2$ inner roots $\lambda_2,\ldots,\lambda_{N-1} $
	converge in probability to the roots of
	$P_{\boldsymbol{\mu}}'$,
	\begin{equation}
		\label{eq:limit_poly_eq_fixed}
		\sum_{i=1}^{N-1}\frac{1}{x-\mu_i}=0,
	\end{equation}
	one per gap $(\mu_{i+1},\mu_i)$, while the two outer roots escape
	(both limits in probability):
	\begin{equation*}
		\frac{\lambda_1}{\sqrt{\beta}}\to\sqrt{\frac{N-1}{2}},
		\qquad
		\frac{\lambda_N}{\sqrt{\beta}}\to-\sqrt{\frac{N-1}{2}}.
	\end{equation*}
	\item
	If instead $\mu_i=\sqrt{\beta}\ssp m_i$ with
	$\mathbf{m}=(m_1>\cdots>m_{N-1})$ fixed, then no root escapes: the whole
	rescaled row $\boldsymbol{\lambda}/\sqrt{\beta}$ converges in probability to
	the $N$ roots of
	\begin{equation}
		\label{eq:gaussian_shifted_derivative}
		P_{\mathbf{m}}'(x)-2x\ssp P_{\mathbf{m}}(x)
		= e^{x^{2}}\frac{d}{dx}\Bigl[e^{-x^{2}}P_{\mathbf{m}}(x)\Bigr]=0,
	\end{equation}
	and these $N$ roots interlace with $\mathbf{m}$.
\end{enumerate}
\end{proposition}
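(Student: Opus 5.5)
The plan is to work directly with the random equation \eqref{eq:random_poly_general} and apply the law of large numbers to its coefficients. By \eqref{eq:xi_distributions}, $\xi_i\sim\frac12\chi^2_\beta$, so $\xi_i/\beta\to\frac12$ in probability; indeed $\E\xi_i=\beta/2$ and $\operatorname{Var}\xi_i=\beta/2$. Meanwhile $\xi_N\sim\mathcal N(a_N,1)$ stays $O(1)$. Dividing \eqref{eq:random_poly_general} by $\beta$ gives
\[
\sum_{i=1}^{N-1}\frac{\xi_i/\beta}{x-\mu_i}=\frac{x-\xi_N}{\beta}.
\]
For roots $x$ staying bounded, the right-hand side tends to $0$. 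By the interlacing remark after \Cref{prop:random_poly_general}, exactly one root lies in each gap $(\mu_{i+1},\mu_i)$. In polynomial form \eqref{eq:poly_identity_general} divided by $\beta$, the coefficients of the degree-$(N-1)$ polynomial $\beta^{-1}\bigl[\sum_i\xi_i\prod_{k\ne i}(x-\mu_k)-P_{\boldsymbol\mu}(x)(x-\xi_N)\bigr]$ converge in probability to those of $\frac12 P_{\boldsymbol\mu}'(x)$. The top coefficient $-1/\beta$ vanishes in the limit. Continuity of simple roots (Hurwitz, or an intermediate value argument on each compact subinterval of a gap, using that $P'_{\boldsymbol\mu}$ has simple roots strictly inside the gaps) then gives part (i) for the inner roots. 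The one subtlety is that the inner roots are confined to $[\mu_{N-1},\mu_1]$, so they cannot escape.

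For the outer roots in (i), I substitute $x=\sqrt\beta\,w$ with $w$ of order one. For large $|x|$ the left-hand side is $\sum_i\xi_i/x\,(1+O(1/x))\approx \frac{(N-1)\beta}{2x}$. The equation becomes $\frac{N-1}{2w}\approx w$ up to $o(1)$ errors, so $w^2\to\frac{N-1}{2}$. To make this rigorous, I will set $S=\sum_i\xi_i$. Then $S/\beta\to\frac{N-1}2$, and the function $F(x)=\sum_i\xi_i/(x-\mu_i)-x+\xi_N$ is strictly decreasing on $(\mu_1,\infty)$. Evaluating $F$ at $x=\sqrt\beta(\sqrt{(N-1)/2}\pm\delta)$ shows that the signs are opposite with probability tending to one, which pins $\lambda_1$. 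The argument for $\lambda_N$ on $(-\infty,\mu_{N-1})$ is symmetric.

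For part (ii), I substitute $\mu_i=\sqrt\beta m_i$ and $x=\sqrt\beta w$, then divide by $\sqrt\beta$. The equation becomes
\[
\sum_i\frac{\xi_i/\beta}{w-m_i}=w-\frac{\xi_N}{\sqrt\beta},
\]
whose limit is $\frac12\sum_i\frac1{w-m_i}=w$, that is, $P_{\mathbf m}'-2wP_{\mathbf m}=0$. This matches \eqref{eq:gaussian_shifted_derivative} after multiplying by $P_{\mathbf m}$. In polynomial form, $\beta^{-N/2}$ times \eqref{eq:poly_identity_general} has coefficients converging in probability to those of $P_{\mathbf m}(w)\,w-\frac12P'_{\mathbf m}(w)$, a degree-$N$ polynomial with nonvanishing leading coefficient. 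Because that leading coefficient is nonzero, no root escapes, and the roots converge by continuity of roots. The interlacing of the limiting roots with $\mathbf m$ follows from the Rolle argument of \Cref{prop:shifted_derivative}(ii)--(iii) applied to $g=e^{-w^2}P_{\mathbf m}$. This $g$ vanishes at each $m_i$ and tends to $0$ at $\pm\infty$, which produces $N-2$ inner critical points plus one on each side, $N$ real simple roots in total. Simplicity of these roots is what justifies the root continuity.

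The main obstacle is the bookkeeping in (i). There the limiting polynomial drops degree by two (the leading $x^N$ is of order $1/\beta$ and the $x^{N-1}$ coefficient is $O(1)$ against the $\beta$-scale terms), so Hurwitz on compact sets only handles the inner roots, and the escaping roots need the separate monotonicity and sign argument on $(\mu_1,\infty)$ and $(-\infty,\mu_{N-1})$ described above.
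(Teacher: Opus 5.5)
Your proof is correct. For the inner roots in (i) and for all of (ii) it follows the paper: Chebyshev gives $\xi_i/\beta\to\frac12$, the polynomial form converges coefficientwise, interlacing identifies $\lambda_2,\ldots,\lambda_{N-1}$, and Rolle's theorem applied to $e^{-x^2}P_{\mathbf m}$ gives the limiting interlacing. There is one slip. The polynomial you divide by $\beta$ in (i) has degree $N$, not $N-1$. Its top coefficient is the $-1/\beta$ you mention, and its $x^{N-1}$ coefficient $(\xi_N+\sum_i\mu_i)/\beta$ also vanishes in the limit.

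The genuine difference is how you handle the two escaping roots.

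\emph{The paper's route.} It substitutes $x=\sqrt{\beta}\,t$ into \eqref{eq:poly_identity_general}. The monic polynomial $\prod_{j}(t-\lambda_j/\sqrt{\beta})$ then converges coefficientwise to $t^{N-2}\bigl(t^2-\frac{N-1}{2}\bigr)$. The limits $\pm\sqrt{(N-1)/2}$ are read off from continuity of roots: $N-2$ rescaled roots collapse to $0$, and the ordering $\lambda_1\ge\lambda_N$ assigns the signs.

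\emph{Your route.} You use that $F(x)=\sum_i\xi_i/(x-\mu_i)-x+\xi_N$ is strictly decreasing on $(\mu_1,\infty)$, with $\lambda_1$ as its only zero there. You then check the signs of $F/\sqrt{\beta}$ at the two points $\sqrt{\beta}\bigl(\sqrt{(N-1)/2}\pm\delta\bigr)$, where it converges to $\frac{N-1}{2w}-w$. The case of $\lambda_N$ on $(-\infty,\mu_{N-1})$ is symmetric.

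\emph{Comparison.} The paper's route is shorter and reuses the single coefficients-to-roots mechanism that drives every part of the proof. Yours is more elementary and needs only convergence in probability at two points. It localizes each outer root separately, avoids root continuity at a degenerate limit with an $(N-2)$-fold root at $0$, and tells you directly which root goes to which limit.
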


On Hermite polynomials $H_n$, the up transition of~(ii) is the Rodrigues
(raising) step
$$e^{x^{2}}\frac{d}{dx}\bigl[e^{-x^{2}}H_{n-1}\bigr]=-H_{n},$$ taking the roots
of $H_{n-1}$ to the roots of $H_n$. This agrees with the $\infty$-corners
lattice of~\cite{GorinMarcus2020}: for the top row at the roots of $H_N$, the
lowering relation $H_n'=2nH_{n-1}$ places level $k$ at the roots of $H_k$ for
every $k$, and the up transition climbs the same tower.

\begin{proof}[Proof of \Cref{prop:poly_limit_fixed}]
Since $\xi_i\sim\frac12\chi^2_\beta$ has mean $\beta/2$ and variance $\beta/2$,
Chebyshev's inequality gives $\xi_i/(\beta/2)\to1$ in probability for
$i=1,\ldots,N-1$; and $\xi_N\sim\mathcal{N}(a_N,1)$ is $O_P(1)$. Both statements
are unconditional, the law of $\boldsymbol{\xi}$ in~\eqref{eq:xi_distributions}
being independent of $\boldsymbol{\mu}$.

(i) Dividing~\eqref{eq:random_poly_general} by $\beta/2$ gives
\begin{equation*}
	\sum_{i=1}^{N-1}\frac{2\xi_i/\beta}{x-\mu_i}=\frac{2}{\beta}\bigl(x-\xi_N\bigr).
\end{equation*}
On compact sets at positive distance from $\mu_1,\ldots,\mu_{N-1}$ the
left-hand side becomes $\sum_i(x-\mu_i)^{-1}$, while the right-hand side goes to
$0$, both uniformly and in probability. This
gives~\eqref{eq:limit_poly_eq_fixed}. Its solutions are the $N-2$ roots of
$P_{\boldsymbol{\mu}}'$, one in each gap $(\mu_{i+1},\mu_i)$ by Rolle's theorem.
Multiplying the last display by $P_{\boldsymbol{\mu}}(x)$ turns it into a
polynomial equation of degree~$N$ whose roots are $\lambda_1,\ldots,\lambda_N$.
The coefficients of this polynomial converge in probability to those of
$P_{\boldsymbol{\mu}}'$, in particular the two leading ones to~$0$.
So $N-2$ roots converge to the roots of $P_{\boldsymbol{\mu}}'$, and the other
two leave every compact set.
By the interlacing $\boldsymbol{\mu}\prec\boldsymbol{\lambda}$, the former are
$\lambda_2,\ldots,\lambda_{N-1}$.
For the remaining two, solve~\eqref{eq:poly_identity_general} for
$\prod_{j=1}^{N}(x-\lambda_j)$ and substitute $x=\sqrt{\beta}\ssp t$: the monic
polynomial $\prod_{j=1}^{N}\bigl(t-\lambda_j/\sqrt{\beta}\bigr)$ converges in
probability, coefficientwise, to $t^{N-2}\bigl(t^{2}-\tfrac{N-1}{2}\bigr)$, so
exactly two roots reach scale~$\sqrt{\beta}$, with limits
$\pm\sqrt{(N-1)/2}$.

(ii) With $\mu_i=\sqrt{\beta}\ssp m_i$ and $x=\sqrt{\beta}\ssp\ell$, equation
\eqref{eq:random_poly_general} reads
\begin{equation*}
	\frac{1}{\sqrt{\beta}}\sum_{i=1}^{N-1}\frac{\xi_i}{\ell-m_i}
	=\sqrt{\beta}\ssp\ell-\xi_N .
\end{equation*}
Dividing by $\sqrt{\beta}$ and using $\xi_i/(\beta/2)\to1$ together with
$\xi_N/\sqrt{\beta}\to0$ turns the last display into
$\frac12\sum_i(\ell-m_i)^{-1}=\ell$, which is~\eqref{eq:gaussian_shifted_derivative}
after multiplying by $2P_{\mathbf{m}}(\ell)$.
The limiting polynomial equation \eqref{eq:gaussian_shifted_derivative}
has degree $N$ with leading coefficient $-2$, so it has
$N$ roots. They interlace with $\mathbf{m}$: the function $e^{-x^{2}}P_{\mathbf{m}}(x)$
vanishes at $m_1,\ldots,m_{N-1}$ and tends to $0$ at $\pm\infty$, so by Rolle's
theorem its derivative vanishes once in each of the $N-2$ gaps, once above $m_1$
and once below $m_{N-1}$.
Convergence of the roots follows from that of the coefficients, since the leading
coefficient is bounded away from $0$.
This completes the proof.
\end{proof}

The fixed perturbation is invisible in both regimes of \Cref{prop:poly_limit_fixed},
for the same reason as in
\Cref{thm:crystallization_fixed}: $a_N$ enters only through
$\xi_N\sim\mathcal{N}(a_N,1)$, which is small compared to other coefficients.

\subsection{Linearly growing perturbation}
\label{sec:up_transition_scaled}

The difference from \Cref{sec:up_transition_fixed} is
that $\xi_N$ now contributes at leading order, and the ordinary derivative
of~\eqref{eq:limit_poly_eq_fixed} is replaced by the shifted derivative
$D_{\mathsf{a}_N}$~\eqref{eq:shifted_deriv_operator}, whose root structure is
recorded in \Cref{prop:shifted_derivative}.

\begin{proposition}
\label{prop:poly_limit_scaled}
Let $a_N=\frac{\beta}{2}\mathsf{a}_N$ with $\mathsf{a}_N\ne0$ fixed, let
$\boldsymbol{\mu}=(\mu_1>\cdots>\mu_{N-1})$ be fixed and not depend on $\beta$, and let random
$\boldsymbol{\lambda}$ be obtained from $\boldsymbol{\mu}$
through~\eqref{eq:random_poly_general}. Then, as $\beta\to\infty$:
\begin{enumerate}[\normalfont(i)]
	\item $N-1$ of the roots stay bounded and converge in probability to the roots of
	\begin{equation}
		\label{eq:limit_poly_eq_scaled}
		\sum_{i=1}^{N-1}\frac{1}{x-\mu_i}=-\mathsf{a}_N,
		\qquad\text{that is,}\qquad
		D_{\mathsf{a}_N}P_{\boldsymbol{\mu}}(x)
		= P_{\boldsymbol{\mu}}'(x)+\mathsf{a}_N P_{\boldsymbol{\mu}}(x)=0 .
	\end{equation}
	\Cref{prop:shifted_derivative}, applied to $P_{\boldsymbol{\mu}}$, locates
	these $N-1$ roots: exactly $N-2$ of them interlace with $\boldsymbol{\mu}$,
	one in each gap $(\mu_{i+1},\mu_i)$, and the last lies outside the range of
	$\boldsymbol{\mu}$ --- below $\mu_{N-1}$ if $\mathsf{a}_N>0$, above $\mu_1$
	if $\mathsf{a}_N<0$.
	\item The remaining root escapes on the scale of the perturbation:
	$\dfrac{2}{\beta}\max_j|\lambda_j|\to|\mathsf{a}_N|$,
	the unbounded root having the sign of $\mathsf{a}_N$.
\end{enumerate}
\end{proposition}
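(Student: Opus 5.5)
Following the proof of \Cref{prop:poly_limit_fixed}, we first record the behavior of the coefficients. Conditionally on the fixed $\boldsymbol{\mu}$, Chebyshev's inequality gives $2\xi_i/\beta\to1$ in probability for $i\le N-1$. Moreover, $\xi_N\sim\mathcal{N}(\frac{\beta}{2}\mathsf{a}_N,1)$, so $2\xi_N/\beta\to\mathsf{a}_N$ in probability.

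We then multiply \eqref{eq:random_poly_general} by $2P_{\boldsymbol{\mu}}(x)/\beta$. This shows that $\lambda_1,\ldots,\lambda_N$ are the roots of the degree-$N$ polynomial
\begin{equation*}
	S_\beta(x)\coloneqq\sum_{i=1}^{N-1}\frac{2\xi_i}{\beta}\prod_{k\ne i}(x-\mu_k)
	-\frac{2}{\beta}(x-\xi_N)P_{\boldsymbol{\mu}}(x).
\end{equation*}
Its coefficients converge in probability to those of
\begin{equation*}
	P_{\boldsymbol{\mu}}'(x)+\mathsf{a}_NP_{\boldsymbol{\mu}}(x)=D_{\mathsf{a}_N}P_{\boldsymbol{\mu}}(x).
\end{equation*}
This limit has degree $N-1$ with leading coefficient $\mathsf{a}_N\ne0$. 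The leading coefficient of $S_\beta$ is $-2/\beta\to0$.

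For part~(i), we invoke continuity of roots for polynomials whose degree drops in the limit. This is the Hurwitz/Rouché argument applied on discs around the $N-1$ distinct roots of $D_{\mathsf{a}_N}P_{\boldsymbol{\mu}}$; the roots are distinct and real by \Cref{prop:shifted_derivative}(ii),(iii). On a high-probability event, $S_\beta$ has exactly one root in each small disc, and all other roots leave every compact set. Since $S_\beta$ has real coefficients, a single root in a disc symmetric about $\R$ is real. The location statement (one root per gap, one spurious root beyond the range of $\boldsymbol{\mu}$ on the side determined by the sign of $\mathsf{a}_N$) is then read off from \Cref{prop:shifted_derivative} applied to $P_{\boldsymbol{\mu}}$ with $c=\mathsf{a}_N$.

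For part~(ii), exactly one root is unbounded, because $N-1$ roots were accounted for among the $N$. To find its scale, we use Vieta on the monic polynomial $\prod_j(x-\lambda_j)$. By \eqref{eq_xi2},
\begin{equation*}
	\sum_j\lambda_j=\xi_N+\sum_i\mu_i,
\end{equation*}
so $\frac{2}{\beta}\sum_j\lambda_j\to\mathsf{a}_N$. The bounded roots contribute $O_P(1)$ to this sum. Hence the escaping root $\lambda_*$ satisfies $2\lambda_*/\beta\to\mathsf{a}_N$, which gives both its sign and its scale. Since the other roots are bounded, $\frac{2}{\beta}\max_j|\lambda_j|\to|\mathsf{a}_N|$.

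We also note a consistency check with interlacing: $\lambda_*$ must be $\lambda_1$ if $\mathsf{a}_N>0$ and $\lambda_N$ if $\mathsf{a}_N<0$. Then the spurious bounded root sits on the opposite side, below $\mu_{N-1}$ or above $\mu_1$ respectively, and this matches \Cref{prop:shifted_derivative}(iii).

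The main obstacle is making the degree-drop root continuity rigorous in probability. The plan is to work deterministically on the event that all coefficients are within $\eta$ of their limits, and to apply Rouché there. On that event, $|S_\beta-D_{\mathsf{a}_N}P_{\boldsymbol{\mu}}|$ is small on a fixed large circle and on small circles around the limit roots. Rouché then gives exactly $N-1$ roots inside the large circle, one per small disc, and therefore exactly one root outside the large circle.
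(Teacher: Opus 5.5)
Your proposal is correct and follows the paper's proof. You use the same coefficient convergence of the rescaled degree-$N$ polynomial to $D_{\mathsf{a}_N}P_{\boldsymbol{\mu}}$, root continuity under the degree drop, and the trace identity \eqref{eq_xi2} to locate the escaping root; the one difference is that you write out the Rouch\'e argument explicitly, where the paper only says it follows ``similarly to'' \Cref{prop:poly_limit_fixed}\,(i).
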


We call the root in~(ii) the \emph{escaping root}
and denote it by $\lambda_{\mathrm{esc}}$.
The root
of~\eqref{eq:limit_poly_eq_scaled} outside the range of $\boldsymbol{\mu}$
in~(i) is referred to as
the \emph{spurious root}, as in \Cref{prop:shifted_derivative}.

\begin{proof}[Proof of \Cref{prop:poly_limit_scaled}]
As in the proof of \Cref{prop:poly_limit_fixed},
$\xi_i/(\beta/2)\to1$ in probability for $i=1,\ldots,N-1$; and
$\xi_N\sim\mathcal{N}(\frac{\beta}{2}\mathsf{a}_N,1)$, so
$\frac{2}{\beta}\xi_N=\mathsf{a}_N+O_P(\beta^{-1})$.

(i) Dividing~\eqref{eq:random_poly_general} by $\beta/2$ gives
\begin{equation*}
	\sum_{i=1}^{N-1}\frac{2\xi_i/\beta}{x-\mu_i}
	=\frac{2x}{\beta}-\frac{2\xi_N}{\beta}.
\end{equation*}
On any compact set at positive distance from $\mu_1,\ldots,\mu_{N-1}$, the
left-hand side converges to $\sum_i\frac{1}{x-\mu_i}$ in probability, while
the right-hand side converges to $-\mathsf{a}_N$; multiplying
by $P_{\boldsymbol{\mu}}(x)$ turns~\eqref{eq:limit_poly_eq_scaled} into
$D_{\mathsf{a}_N}P_{\boldsymbol{\mu}}=0$.
By \Cref{prop:shifted_derivative}, exactly
$N-1$ of the roots 
of this equation
stay bounded. That they converge
in probability to the deterministic roots
of \eqref{eq:limit_poly_eq_scaled}
follows
similarly to the proof of \Cref{prop:poly_limit_fixed}\,(i).

(ii) The row $\boldsymbol{\lambda}$ has $N$ points and~(i) accounts for $N-1$
of them, which stay bounded in probability. 
We denoted the remaining root by
$\lambda_{\mathrm{esc}}$.
From the relation
\eqref{eq_xi2}
between
$|\boldsymbol{\lambda}|$, 
$|\boldsymbol{\mu}|$, and $\xi_N$,
we see that
the $N-1$ bounded roots and $|\boldsymbol{\mu}|$ contribute $O_P(1)$, while
$\frac{2}{\beta}\ssp\xi_N\to\mathsf{a}_N$; hence
$\frac{2}{\beta}\ssp\lambda_{\mathrm{esc}}\to\mathsf{a}_N$ in probability,
which gives the claimed scale together with the sign of the escaping root.
\end{proof}

\begin{remark}
	The two extreme roots $\lambda_1$ and $\lambda_N$ --- the two lying
	outside the range of $\boldsymbol{\mu}$ --- are governed in
	\Cref{prop:poly_limit_fixed}\,(i) and \Cref{prop:poly_limit_scaled} by a
	common approximate
	equation. Indeed, replacing $\sum_i\xi_i/(x-\mu_i)$ by $\frac{\beta(N-1)}{2x}$
	and $\xi_N$ by $\frac{\beta}{2}\mathsf{a}_N$
	in~\eqref{eq:random_poly_general} yields
	\begin{equation*}
		x^{2}-\frac{\beta\mathsf{a}_N}{2}\ssp x-\frac{\beta(N-1)}{2}=0 .
	\end{equation*}
	At $\mathsf{a}_N=0$ its roots form the symmetric pair
	$\pm\sqrt{\beta(N-1)/2}$, matching the limits of $\lambda_1$ and
	$\lambda_N$ in \Cref{prop:poly_limit_fixed}\,(i); for fixed
	$\mathsf{a}_N\ne0$ they are approximately $\frac{\beta\mathsf{a}_N}{2}$
	and $-\frac{N-1}{\mathsf{a}_N}$: the former matches the escaping root of
	\Cref{prop:poly_limit_scaled}\,(ii), and the latter approaches the
	spurious root of \Cref{prop:poly_limit_scaled}\,(i) as
	$\mathsf{a}_N\to0$.
	The two regimes cross over at $\mathsf{a}_N\asymp\beta^{-1/2}$, where
	both extreme roots are at scale~$\sqrt{\beta}$.
\end{remark}

% \bib, bibdiv, biblist are defined by the amsrefs package.

\medskip

\textsc{l. petrov, university of virginia, charlottesville, va, usa}

e-mail: \texttt{lenia.petrov@gmail.com}

\medskip

\textsc{j. xu, the ohio state university, columbus, oh, usa}

e-mail: \texttt{jxu0800@gmail.com}


\begin{bibdiv}
\begin{biblist}
\bib{adler2013random}{article}{
      author={Adler, M.},
      author={van Moerbeke, P.},
      author={Wang, D.},
       title={Random matrix minor processes related to percolation theory},
        date={2013},
     journal={Random Matrices Theory Appl.},
      volume={2},
      number={04},
       pages={1350008},
        note={arXiv:1301.7017 [math.PR]},
}

\bib{Anderson1991Selberg}{article}{
      author={Anderson, G.~W.},
       title={{A short proof of {Selberg}'s generalized beta formula}},
        date={1991},
     journal={Forum Math.},
      volume={3},
       pages={415\ndash 417},
}

\bib{BBP2005phase}{article}{
      author={Baik, J.},
      author={{Ben Arous}, G.},
      author={P\'ech\'e, S.},
       title={{Phase transition of the largest eigenvalue for nonnull complex
  sample covariance matrices}},
        date={2005},
     journal={Ann. Probab.},
      volume={33},
      number={5},
       pages={1643\ndash 1697},
        note={arXiv:math/0403022 [math.PR]},
}

\bib{Baryshnikov_GUE2001}{article}{
      author={Baryshnikov, Yu.},
       title={{GUEs and queues}},
        date={2001},
     journal={Probab. Theory Relat. Fields},
      volume={119},
       pages={256\ndash 274},
}

\bib{benaych2022matrix}{article}{
      author={Benaych-Georges, F.},
      author={Cuenca, C.},
      author={Gorin, V.},
       title={{Matrix addition and the Dunkl transform at high temperature}},
        date={2022},
     journal={Commun. Math. Phys.},
      volume={394},
      number={2},
       pages={735\ndash 795},
        note={arXiv:2105.03795 [math-ph]},
}

\bib{bloemendal2013limits}{article}{
      author={Bloemendal, A.},
      author={Vir{\'a}g, B.},
       title={{Limits of spiked random matrices I}},
        date={2013},
     journal={Probab. Theory Relat. Fields},
      volume={156},
       pages={795\ndash 825},
        note={arXiv:1011.1877 [math.PR]},
}

\bib{bloemendal2016limits}{article}{
      author={Bloemendal, A.},
      author={Vir{\'a}g, B.},
       title={{Limits of spiked random matrices II}},
        date={2016},
     journal={Ann. Probab.},
      volume={44},
       pages={2726\ndash 2769},
        note={arXiv:1109.3704 [math.PR]},
}

\bib{BorodinGorin2015}{article}{
      author={Borodin, A.},
      author={Gorin, V.},
       title={{General beta-{Jacobi} corners process and the {Gaussian} free
  field}},
        date={2015},
     journal={Comm. Pure Appl. Math.},
      volume={68},
      number={10},
       pages={1774\ndash 1844},
        note={arXiv:1305.3627 [math.PR]},
}

\bib{cuenca2025discrete}{article}{
      author={Cuenca, C.},
      author={Do{\l}{\k e}ga, M.},
       title={{Discrete $N$-particle systems at high temperature through Jack
  generating functions}},
        date={2025},
     journal={arXiv preprint},
        note={arXiv:2502.13098 [math.PR]},
}

\bib{DesrosiersLiu2015}{article}{
      author={Desrosiers, P.},
      author={Liu, D.-Z.},
       title={{Scaling limits of correlations of characteristic polynomials for
  the {Gaussian} $\beta$-ensemble with external source}},
        date={2015},
     journal={Int. Math. Res. Not. IMRN},
      volume={2015},
      number={12},
       pages={3751\ndash 3781},
        note={arXiv:1306.4058 [math-ph]},
}

\bib{dixon1905generalization}{article}{
      author={Dixon, A.~L.},
       title={{Generalization of Legendre's Formula $KE' - (K - E)K' =
  \frac{1}{2}\pi$}},
        date={1905},
     journal={Proc. Lond. Math. Soc.},
      volume={3},
      number={1},
       pages={206\ndash 224},
}

\bib{dumitriu2002matrix}{article}{
      author={Dumitriu, I.},
      author={Edelman, A.},
       title={{Matrix models for beta ensembles}},
        date={2002},
     journal={J. Math. Phys.},
      volume={43},
      number={11},
       pages={5830\ndash 5847},
        note={arXiv:math-ph/0206043},
}

\bib{dyson1962brownian}{article}{
      author={Dyson, F.~J.},
       title={{A Brownian-motion model for the eigenvalues of a random
  matrix}},
        date={1962},
     journal={J. Math. Phys.},
      volume={3},
      number={6},
       pages={1191\ndash 1198},
}

\bib{Ferrari2014PerturbedGUE}{article}{
      author={Ferrari, P.~L.},
      author={Frings, R.},
       title={{Perturbed GUE minor process and Warren's process with drifts}},
        date={2014},
     journal={J. Stat. Phys.},
      volume={154},
      number={1-2},
       pages={356\ndash 377},
        note={arXiv:1212.5534 [math-ph]},
}

\bib{Forrester-LogGas}{book}{
      author={Forrester, P.~J.},
       title={{Log-gases and random matrices}},
   publisher={Princeton University Press},
        date={2010},
}

\bib{Forrester2013source}{article}{
      author={Forrester, P.~J.},
       title={{The averaged characteristic polynomial for the {Gaussian} and
  chiral {Gaussian} ensembles with a source}},
        date={2013},
     journal={J. Phys. A},
      volume={46},
      number={34},
       pages={345204},
        note={arXiv:1203.5838 [math-ph]},
}

\bib{LamarreShkolnikov2019}{article}{
      author={Gaudreau~Lamarre, P.~Y.},
      author={Shkolnikov, M.},
       title={{Edge of spiked beta ensembles, stochastic {Airy} semigroups and
  reflected {Brownian} motions}},
        date={2019},
     journal={Ann. Inst. Henri Poincar\'e Probab. Stat.},
      volume={55},
      number={3},
       pages={1402\ndash 1438},
        note={arXiv:1706.08451 [math.PR]},
}

\bib{GorinKleptsyn2024}{article}{
      author={Gorin, V.},
      author={Kleptsyn, V.},
       title={{Universal objects of the infinite beta random matrix theory}},
        date={2024},
     journal={J. Eur. Math. Soc.},
      volume={26},
      number={9},
       pages={3429\ndash 3496},
        note={arXiv:2009.02006 [math.PR]},
}

\bib{GorinMarcus2020}{article}{
      author={Gorin, V.},
      author={Marcus, A.~W.},
       title={{Crystallization of random matrix orbits}},
        date={2020},
     journal={IMRN},
      volume={2020},
      number={3},
       pages={883\ndash 913},
        note={arXiv:1706.07393 [math.PR]},
}

\bib{GorinShkolnikov2014}{article}{
      author={Gorin, V.},
      author={Shkolnikov, M.},
       title={{Multilevel Dyson Brownian motions via Jack polynomials}},
        date={2015},
     journal={Probab. Theory Relat. Fields},
      volume={163},
      number={3-4},
       pages={413\ndash 463},
        note={arXiv:1401.5595 [math.PR]},
}

\bib{gorin2024airy}{article}{
      author={Gorin, V.},
      author={Xu, J.},
      author={Zhang, L.},
       title={{Airy$_\beta$ line ensemble and its Laplace transform}},
        date={2024},
     journal={arXiv preprint},
        note={arXiv:2411.10829 [math.PR]},
}

\bib{GuhrKohler2002}{article}{
      author={Guhr, T.},
      author={Kohler, H.},
       title={{Recursive construction for a class of radial functions. {I}.
  {O}rdinary space}},
        date={2002},
     journal={J. Math. Phys.},
      volume={43},
      number={5},
       pages={2707\ndash 2740},
        note={arXiv:math-ph/0011007 [math-ph]},
}

\bib{HarishChandra1957}{article}{
      author={Harish-Chandra},
       title={{Differential operators on a semisimple {Lie} algebra}},
        date={1957},
     journal={Amer. J. Math.},
      volume={79},
      number={1},
       pages={87\ndash 120},
}

\bib{HeckmannOpdam1997}{article}{
      author={Heckman, G.J.},
      author={Opdam, E.M.},
       title={{Yang's system of particles and Hecke algebras}},
        date={1997},
     journal={Ann. of Math.},
      volume={145},
      number={1},
       pages={139\ndash 173},
}

\bib{itzykson1980planar}{article}{
      author={Itzykson, C.},
      author={Zuber, J.-B.},
       title={{The planar approximation. II}},
        date={1980},
     journal={J. Math. Phys.},
      volume={21},
      number={3},
       pages={411\ndash 421},
}

\bib{johansson2006eigenvalues}{article}{
      author={Johansson, K.},
      author={Nordenstam, E.},
       title={{Eigenvalues of GUE minors}},
        date={2006},
     journal={Electron. J. Probab.},
      volume={11},
      number={50},
       pages={1342\ndash 1371},
        note={arXiv:math/0606760 [math.PR]; Erratum: Electron. J. Probab. 12
  (2007), no.~37, 1048--1051},
}

\bib{KeatingXu2024}{article}{
      author={Keating, D.},
      author={Xu, J.},
       title={{Airy limit for $\beta$-additions through {Dunkl} operators}},
        date={2024},
     journal={arXiv preprint},
        note={arXiv:2411.12149 [math.PR]},
}

\bib{LernerBrecher2023}{article}{
      author={Lerner-Brecher, M.},
       title={{On the hard edge limit of the zero temperature {Laguerre}
  $\beta$-corners process}},
        date={2025},
     journal={Ann. Inst. Henri Poincar\'e Probab. Stat.},
      volume={61},
      number={4},
       pages={2721\ndash 2747},
        note={arXiv:2308.14707 [math.PR]},
}

\bib{Marcus2021}{article}{
      author={Marcus, A.~W.},
       title={{Polynomial convolutions and (finite) free probability}},
        date={2021},
     journal={arXiv preprint},
        note={arXiv:2108.07054 [math.CO]},
}

\bib{MarcusSpielmanSrivastava2022}{article}{
      author={Marcus, A.~W.},
      author={Spielman, D.~A.},
      author={Srivastava, N.},
       title={{Finite free convolutions of polynomials}},
        date={2022},
     journal={Probab. Theory Relat. Fields},
      volume={182},
       pages={807\ndash 848},
        note={arXiv:1504.00350 [math.CO]},
}

\bib{Neretin2003Rayleigh}{article}{
      author={Neretin, Yu.A.},
       title={{Rayleigh triangles and non-matrix interpolation of matrix beta
  integrals}},
        date={2003},
     journal={Sb. Math.},
      volume={194},
      number={4},
       pages={515\ndash 540},
        note={arXiv:math/0301070},
}

\bib{OkounkovOlshanskiJack1996}{article}{
      author={Okounkov, A.},
      author={Olshanski, G.},
       title={{Shifted {Jack} polynomials, binomial formula, and
  applications}},
        date={1997},
        ISSN={1073-2780},
     journal={Math. Res. Lett.},
      volume={4},
      number={1},
       pages={69\ndash 78},
        note={arXiv:q-alg/9608020},
}

\bib{Opdam1993}{article}{
      author={Opdam, E.~M.},
       title={{Dunkl operators, {B}essel functions and the discriminant of a
  finite {C}oxeter group}},
        date={1993},
     journal={Compos. Math.},
      volume={85},
      number={3},
       pages={333\ndash 373},
         url={http://www.numdam.org/item/CM_1993__85_3_333_0/},
}

\bib{Peche2006}{article}{
      author={P\'ech\'e, S.},
       title={{The largest eigenvalue of small rank perturbations of
  {Hermitian} random matrices}},
        date={2006},
     journal={Probab. Theory Relat. Fields},
      volume={134},
      number={1},
       pages={127\ndash 173},
        note={arXiv:math/0411487 [math.PR]},
}

\bib{PetrovTikhonov2019}{article}{
      author={Petrov, L.},
      author={Tikhonov, M.},
       title={{Parameter symmetry in perturbed GUE corners process and
  reflected drifted Brownian motions}},
        date={2020},
     journal={J. Stat. Phys.},
      volume={181},
      number={5},
       pages={1996\ndash 2010},
        note={arXiv:1912.08671 [math.PR]},
}

\bib{warren2005dyson}{article}{
      author={Warren, J.},
       title={{Dyson's Brownian motions, intertwining and interlacing}},
        date={2007},
     journal={Electron. J. Probab.},
      volume={12},
      number={19},
       pages={573\ndash 590},
        note={arXiv:math/0509720 [math.PR]},
}

\bib{Wong2001}{book}{
      author={Wong, R.},
       title={{Asymptotic Approximations of Integrals}},
      series={Classics in Applied Mathematics},
   publisher={SIAM},
        date={2001},
      volume={34},
}
\end{biblist}
\end{bibdiv}
\end{document}